\numberwithin{table}{section}
\newtheorem{theorem}{Theorem}[section]
\newtheorem{corollary}[theorem]{Corollary}
\newtheorem{lemma}[theorem]{Lemma}
\newtheorem{proposition}[theorem]{Proposition}
\newtheorem{definition}[theorem]{Definition}
\newtheorem{assumption}[theorem]{Assumption}
\newcommand{\be}{\begin{equation}}
\newcommand{\ee}{\end{equation}}
\newcommand{\bee}{\begin{equation*}}
\newcommand{\eee}{\end{equation*}}
\newcommand{\bea}{\begin{eqnarray}}
\newcommand{\eea}{\end{eqnarray}}
\newcommand{\beaa}{\begin{eqnarray*}}
\newcommand{\eeaa}{\end{eqnarray*}}
\newcommand{\st}{\textrm{s.t. }}
\newcommand{\R}{\mathbb{R}}
\newcommand{\cM}{\mathcal{M}}
\newcommand{\cT}{\mathcal{T}}
\newcommand{\cE}{\mathcal{E}}
\newcommand{\grad}{\mathrm{grad}}
\newcommand{\Proj}{\mathrm{Proj}}
\newcommand{\Hess}{\mathrm{Hess}}
\newcommand{\J}{\mathrm{J}}
\newcommand{\Retr}{\mathrm{Retr}}
\newcommand{\St}{\mathrm{St}}
\newcommand{\vvec}{\mathrm{vec}}
\newcommand{\half}{\frac{1}{2}}
\newcommand{\tanst}{\cT_X\St_{n,r}}
\begin{document}



\title{A Cubic Regularized Newton's Method over Riemannian Manifolds}

\author{
	Junyu Zhang\thanks{Department of Industrial \& System Engineering, University of Minnesota (zhan4393@umn.edu).}
	\and
	Shuzhong Zhang\thanks{Department of Industrial \& System Engineering, University of Minnesota (zhangs@umn.edu).}
}

\date{\today}

\maketitle

\begin{abstract}
	In this paper we present a cubic regularized Newton's method to minimize a smooth function over a Riemannian manifold. The proposed algorithm is shown to reach a second-order $\epsilon$-stationary point within $\mathcal{O}(1/\epsilon^{\frac{3}{2}})$ iterations, under the condition that the pullbacks are locally Lipschitz continuous, a condition that is shown to be satisfied if the manifold is compact.
	Furthermore, we present 
	a local superlinear convergence result under some additional conditions.
\end{abstract}

\vspace{0.25cm}

\noindent {\bf Keywords:}
Riemannian optimization, Stiefel Manifold, second-order $\epsilon$-stationary solution, cubic regularization, iteration complexity.

\vspace{0.5cm}

\section{Introduction} \label{sec:intro}
Optimization over a Riemannian manifold (e.g.\ Stiefel manifolds) 
is an important model with numerous applications, including: the PCA, the sparse PCA, eigenvalue and combinatorial optimization; see \cite{GeoStif,APP:RM_sto4,APP:SDP_Gro_IEQ,APP:nemirv,APP:Stie-products}. 
Specifically, this paper considers the following model
\be
\label{prob:Riemannian_Opt}
\min \,\,\, f(x),\,\, \mbox{ subject to } x\in\cM,
\ee
where $\cM$ is a Riemannian manifold. The most commonly encountered Riemannian manifolds include: the Stiefel manifold $\mathrm{St}_{n,k}:= \{X\in\R^{n\times k}: X^\top X = I_{k\times k}\}$, the Grassmann manifold $\mathrm{Gr}_{n,k}:= \mathrm{St}_{n,k}/\mathrm{St}_{k,k} $, the generalized orthogonal matrices $\{X\in\R^{n\times k}: X^\top M X = I_{k\times k}\}$ with $M\succ 0$, the sphere $\mathrm{S}^{n-1}:=\{x\in\R^n:\|x\|_2=1\}$, the low-rank elliptope $\{X\in\R^{n\times n}: \mathrm{diag}(X) = \mathbf{1}_n, X\succeq 0, \mathrm{rank}(X)\leq r\leq n\}$, the low-rank spectrahedron $\{X\in\R^{n\times n}: \mathrm{tr}(X) = 1, X\succeq 0, \mathrm{rank}(X)\leq r\leq n\}$, and a Cartesian product
of the above. 

Model~\eqref{prob:Riemannian_Opt} is
not only non-convex from a Euclidean standpoint, but also difficult to preserve feasibility once an iterate steps out of $\cM$.
An effective way to resolve such difficulty under the framework of Riemannian optimization is to incorporate a so-called retraction operation, which gets the iterate back to the manifold in close proximity.
Hence, gradient-type approach with retraction becomes a natural choice as solution method for Riemannian optimization. Numerous studies have been conducted along this line;
cf.\ \cite{Stief:SVRG-Jiang,Stief:SVRG-Kasai,wenDouble}. Note that  such first-order methods typically assure an iteration complexity of $\mathcal{O}(1/\epsilon^2)$ to reach a {\it first-order}\/ $\epsilon$-stationary point, without guaranteeing any second-order optimality condition.
Speaking of which, on the positive side, recent results (such as the ones in \cite{Dynamical-system-1,Dynamical-system-2,Dynamical-system-3,Dynamical-system-4}) show that many first-order methods, including the Riemannian gradient descent method, actually converge to a strict saddle point with probability 0 if initialized at random.
Yet, there is no guarantee of iteration complexity of convergence to a second-order stationary point. Counter examples do exist, showing that the gradient descent algorithm may take exponential number of steps to converge to a second-order stationary point \cite{GD-Exp-Cvg}. In other words, these methods do not guarantee to reach a point satisfying the second-order optimality condition in a reasonable amount of time in the worst case. On the other hand, the second-order optimality condition actually turns out to be a key quality to possess in many applications. For example, in \cite{LRSDP-SamBurer} the authors proposed a non-convex low-rank approximation
\bea
\label{prob:LRSDP}
&\min& \langle C, VV^\top\rangle \mbox{    } ~~~~ \st V\in \R^{n\times k},  \mathcal{A}(VV^\top) = b
\eea
for the standard SDP
\bea
\label{prob:SDP}
&\min& \langle C, X\rangle \mbox{    } ~~~~ \st X\in S^{n\times n}, X \succeq 0, \mathcal{A}(X) = b.
\eea
It was shown in \cite{Gro:2} that if $V^*$ is a second-order stationary point of \eqref{prob:LRSDP} and is rank-deficient then $V^*(V^*)^\top$ is optimal to \eqref{prob:SDP}. Under a similar setting, \cite{Gro:1} presented a Grothendieck-type inequality 
\be
\label{grothendieck-1}
\langle C,X^*\rangle \leq \langle C,V^*(V^*)^\top\rangle \leq \langle C,X^*\rangle + \frac{8n}{\sqrt{k}} \|C\|_2,
\ee
for the SDP relaxation of the max-cut problem, where $X^*$ is optimal to \eqref{prob:SDP} and $V^*$ is an arbitrary second-order stationary point of \eqref{prob:LRSDP}. This bound was later improved in \cite{APP:SDP_Gro_IEQ} for the low-rank max-cut-SDP (MC-SDP) problem and the max-orthogonal-cut-SDP (MOC-SDP) problem, whose feasible regions are a product of spheres and a product of Stiefel manifolds, respectively. The authors also extend the result to the second-order $\epsilon$-stationary points.
Such results reveal the importance of the second-order $\epsilon$-stationary solutions, thus promoting the use of the Hessian information.
In the literature, globally convergent algorithms guaranteeing second-order optimality conditions for Riemannian optimization are based on the trust-region method~\cite{RM_tru,RM_tru1,RM_glo}.
In general, the Riemannian trust-region (RTR) algorithms return a first-order $\epsilon$-stationary point within $\mathcal{O}(1/\epsilon^{2})$ iterations, and returns a second-order $\epsilon$-stationary point (to be defined in later in the paper) in $\mathcal{O}(1/\epsilon^{2.5})$ iterations.

In this paper, we propose a cubic regularized Riemannian Newton's (CRRN) method to solve the smooth optimization problems on Riemannian manifolds. This method follows the line of research originated from Nesterov's pioneering work \cite{Cubic:Nestrov}, which has drawn significant research attention in the classical Euclidean optimization context; see e.g. \cite{Cubic:Cartis-1,Cubic:Cartis-2,Cubic:Cartis-3,Cubic:Agarwal,Cubic:Carmon}. Such methods typically find a second-order $\epsilon$-stationary point within $\mathcal{O}(1/\epsilon^{\frac{3}{2}})$ iterations in the Euclidean case. In this paper, we prove that this iteration complexity result carries over to general Riemannian optimization. By assuming a certain local Lojasiewicz inequality property or the non-degeneracy of the Riemannian Hessian, local superlinear convergence can be further guaranteed.

\textbf{Organization.} In Section \ref{sec:basics}, we introduce some basic properties of Riemannian manifolds, as well as the notions that are essential for Riemannian optimization.
In Section \ref{sec:algo}, we present our algorithms and their iteration complexity bounds. In Section \ref{sec:app}, we discuss the application of
our algorithm specialized to the Stiefel manifold, and report results of numerical experiments. 
All the relevant constants required by the algorithm are 
explicitly computed in this special case. In Section~\ref{sec:conclusion}, we conclude the paper.

\textbf{Notations.} The Jacobian of a vector function $\mathbf{g}(x): \R^n\rightarrow\R^m$ is denoted as $\mathrm{J}\mathbf{g}(x)$. When dealing with a composite function $\mathbf{g}(\mathbf{f}(\xi))$ with $\mathbf{f}(\xi):\R^k\rightarrow\R^n$, we use $\mathrm{J}\mathbf{g}(\mathbf{f}(\xi))$ to denote the Jacobian of $\mathbf{g}$ at point $\mathbf{f}(\xi)$ and use $\mathrm{J}_{\xi}\mathbf{g}(\mathbf{f}(\xi))$ to denote the Jacobian of the function $\mathbf{g}\circ\mathbf{f}$ at point $\xi$. We also differentiate $\nabla$ and $\nabla^2$ with $\nabla_{\xi}$ and $\nabla^2_{\xi}$ if $\mathbf{g}$ is a scaler function. For a Hessian $\nabla^2f(x)$ operating along a direction $z$, we shall write $\nabla^2f(x)[z]$ instead of $\nabla^2f(x)z$, as the former is less confusing especially when $z$ itself is a matrix.

\section{Riemannian Optimization}\label{sec:basics}

This section provides some preliminaries regarding Riemannian optimization, which aims to minimize a smooth function over a Riemannian submanifold  $\cM$  of a Euclidean space $\cE$.
For an in-depth discussion of Riemannian manifolds, we refer the interested readers to \cite{Opt_Manif:Absil-etal-2009,Smooth_Manif:Lee-John-2008}. Our brief introduction is to be followed by a discussion about the optimality conditions under the Riemannian optimization setting. We then introduce the notion of the extended retraction. Finally, we introduce the pullback operation and its properties, for the benefit of analyzing the performance of our algorithms in later sections.

\subsection{Riemannian Manifolds}
Consider a differentiable submanifold in a Eucliedean space (we follow the notations of \cite{Retraction}).
\begin{definition}[Differentiable Submanifold \cite{Retraction}]
	\label{defn:dif-submanifold}
	We call $\cM$ to be a $d$-dimensional $C^k$ differentiable submanifold of $\mathbb{R}^n,$ $k\geq 2,$ if for any $x\in\cM$ there exists a neighbourhood $B_x$ of $x$ in $\mathbb{R}^n$ and a $C^k$ diffeomorphism $\psi$ on $B_x$ into $\mathbb{R}^n$ such that $\forall y\in B_x$, $y\in \cM$ if and only if
	$$\psi_{d+1}(y) = \cdots = \psi_n(y) = 0.$$
\end{definition}
A useful insight of this definition is to recognize that a submanifold can locally  be induced by a set of equations. By applying the implicit function theorem,
the following is readily seen:
\begin{corollary}
	Let $\cM := \{x\in\mathbb{R}^n:\:\phi(x) = 0\}$ where $\phi:\mathbb{R}^n\rightarrow\mathbb{R}^m$ is a $C^k$ smooth mapping. Then $\cM$ is an $n-m$ dimensional $C^k$ submanifold of $\mathbb{R}^n$ if for any $x\in\cM$, the Jacobian matrix $\mathrm{J}\phi(x)$ has full rank.
\end{corollary}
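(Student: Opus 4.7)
The goal is to verify the defining condition of a $C^k$ submanifold given in Definition~\ref{defn:dif-submanifold} at an arbitrary point $x\in\cM$, with $d=n-m$. The natural tool is the inverse function theorem, applied to an auxiliary map built from the coordinates of $\phi$ together with enough of the ambient coordinates to fill out an invertible Jacobian.

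\textbf{Step 1 (Setup via a coordinate permutation).} Fix $x\in\cM$. Since $\mathrm{J}\phi(x)\in\R^{m\times n}$ has full rank $m$, it contains $m$ linearly independent columns. After a permutation of the coordinates of $\R^n$ (which is itself a global $C^\infty$ diffeomorphism and can be absorbed into $\psi$), I may assume without loss of generality that the last $m$ columns of $\mathrm{J}\phi(x)$ form an invertible $m\times m$ matrix, call it $A$.

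\textbf{Step 2 (Construct the candidate diffeomorphism).} Define $\psi:\R^n\to\R^n$ by
\[
\psi(y) \;=\; \bigl(y_1,\ldots,y_{n-m},\,\phi_1(y),\ldots,\phi_m(y)\bigr).
\]
Then $\psi$ is $C^k$ because $\phi$ is, and its Jacobian at $x$ has the block triangular form
\[
\mathrm{J}\psi(x) \;=\; \begin{pmatrix} I_{n-m} & 0 \\ * & A \end{pmatrix},
\]
whose determinant is $\det(A)\neq 0$. By the inverse function theorem, there is an open neighbourhood $B_x$ of $x$ on which $\psi$ is a $C^k$ diffeomorphism onto its image.

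\textbf{Step 3 (Identify $\cM$ locally).} For any $y\in B_x$, the components $\psi_{d+1}(y),\ldots,\psi_n(y)$ are by construction exactly $\phi_1(y),\ldots,\phi_m(y)$. Hence $y\in\cM$ (i.e.\ $\phi(y)=0$) if and only if $\psi_{d+1}(y)=\cdots=\psi_n(y)=0$, which is precisely the condition in Definition~\ref{defn:dif-submanifold} with $d=n-m$. Since $x\in\cM$ was arbitrary, $\cM$ is an $(n-m)$-dimensional $C^k$ submanifold of $\R^n$.

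\textbf{Main obstacle.} There is no real obstacle: the argument is the textbook derivation of the regular level set theorem from the inverse function theorem. The only care needed is to match the formal statement of Definition~\ref{defn:dif-submanifold} (a diffeomorphism whose \emph{last} $m$ coordinates cut out the manifold), which is handled by ordering the components of $\psi$ so that the $\phi$-components occupy the final slots and by permuting the ambient coordinates upfront so that the corresponding square submatrix of $\mathrm{J}\phi(x)$ is invertible.
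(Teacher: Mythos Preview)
Your proof is correct and is essentially the same as the paper's approach: the paper does not give a detailed argument but simply states that the corollary is ``readily seen'' by applying the implicit function theorem, and your construction (augmenting $\phi$ with coordinate projections and invoking the inverse function theorem) is precisely the standard proof of that theorem specialized to this setting.
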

For example, the sphere  $\mathrm{S}^{n-1} := \{x\in\mathbb{R}^n:\: \|x\|^2 = 1\}$ is an $n-1$ dimensional $C^\infty$ submanifold and the Stiefel manifold
$\mathrm{St}_{n,r} = \{X\in\mathbb{R}^{n\times r}:\: X^\top X = I_{r\times r}\}$ is an $nr - \frac{r(r+1)}{2}$ dimensional $C^\infty$  submanifold. For a submanifold $\cM$ embedded in a Euclidean space $\cE$, the tangent space of $\cM$ at point $x\in\cM$ is denoted by $\cT_x\cM$, which can be characterized by the following subspace of $\cE$:
\begin{definition}[Tangent Space]
	Suppose $\cM$ is a submanifold of $\cE$. The tangent space of $\cM$ at $x$ is defined as
	$$\cT_x\cM = \bigg\{\gamma'(0): \gamma \mbox{ is a smooth curve with } \gamma(0) = x, \gamma([-\delta,\delta])\subset \cM, \delta>0\bigg\}.$$
	Then the tangent bundle is defined as
	$\cT\cM = \{(x,\xi):\: x\in\cM, \xi\in\cT_x\cM\}.$
\end{definition}
For a submanifold induced by $\cM = \{x\in\R^n:\:\phi(x) =0 \}$,  an effective way to characterize the tangent space is
\begin{equation}
\label{defn:tan-space}
\cT_x\cM = \mathrm{Ran}(\mathrm{J}\phi(x)^\top)^{\perp},
\end{equation}
namely the orthogonal complement of the range space of $\mathrm{J}\phi(x)^\top$. If the tangent spaces is equipped with an inner product (hence inducing a metric),
then this submanifold is known as Riemannian.
\begin{definition}[Riemannian Submanifold]
	Suppose $\cM$ is a differentiable submanifold of $\cE$. We call $\cM$ to be a Riemannian submanifold of $\cE$, if for any $x\in\cM$ the tangent space $\cT_x\cM$ is endowed with the Euclidean inner product;
	that is, for any $\eta,\xi\in\cT_x\cM$, if we let $\cT_x\cM$ be embedded in $\cE$ as a subspace, then the inner product on $\cT_x\cM$ is defined as
	$\langle \xi,\eta\rangle_x := \langle \xi,\eta\rangle,$
	where the latter is the standard Euclidean inner product. Hence the norm $\|\cdot\|_x$ induced by $\langle\cdot,\cdot\rangle_x$ is also the same as the standard $L_2$-norm (or the Frobenius norm in the matrix case).
\end{definition}

Roughly speaking, a Riemannian manifold is a differentiable manifold $\cM$  with an inner product $\langle \cdot,\cdot\rangle_x$ on its tangent spaces, which will be our subject of study henceforth.
Let $f$ be a smooth function defined on $\cE \, ( =\mathbb{R}^n)$. Then $f$ is also a smooth function on $\cM$. If we denote $\Proj_{\cT_x\cM}$ to be the orthogonal projection onto the tangent space $\cT_x\cM$, then 
one can define the \emph{Riemannian gradient}\/ and the \emph{Riemannian Hessian}\/ as follows:
\begin{definition}[Riemannian Gradient and Hessian]
	Let $f$ be a smooth function on $\cE$. The Riemannian gradient $\grad f(x)$ of $f$ with respect to a submanifold $\cM$ is a tangent vector in $\cT_x\cM$ defined by
	\begin{equation}
	\label{defn:Rie-grad}
	\grad f(x) = \Proj_{\cT_x\cM}(\nabla f(x)).
	\end{equation}
	For any $z \in\cT_x\cM$, the operation of the Riemannian Hessian $\Hess f(x)$ of $f$ on $z$ is defined as
	\begin{equation}
	\label{defn:Rie-Hessian}
	\Hess f(x)[z] = \Proj_{\cT_x\cM}(\mathrm{D}\grad f(x)[z]),
	\end{equation}
	where $\mathrm{D}\grad f(x)$ is the differential of $\grad f(x)$ in the usual sense. In terms of Jacobian matrix $\mathrm{J}\grad f(x)$, we have $\mathrm{D}\grad f(x)[z] = (\mathrm{J}\grad f(x))[z]$.
\end{definition}
A core ingredient in Riemannian optimization is the {\it retraction}\/ defined as follows (see \cite{Retraction} for more details).
\begin{definition}[Retraction]\label{defn:retraction}
	Let $\Retr(x,\xi):\cT\cM\rightarrow\cM$ be a mapping from the tangent bundle $\cT\cM$ to the manifold $\cM$. Then we call $\Retr(\cdot,\cdot)$ a retraction if
	\begin{eqnarray}
	\label{defn:1st-retraction}
	\Retr(x,0) = x,\quad  \frac{d}{dt}\Retr(x,t\xi)\bigg|_{t=0}=\xi, \,\, \forall x\in\cM, \forall \xi\in\cT_x\cM.
	\end{eqnarray}
	We call $\Retr(\cdot,\cdot)$ a second-order retraction if it further satisfies
	\begin{equation}
	\label{defn:2nd-retraction}
	\frac{d^2}{dt^2}\Retr(x,t\xi)\bigg|_{t=0} \in \cT_x\cM^\perp, \,\, \forall x\in\cM, \forall \xi\in\cT_x\cM,
	\end{equation}
	where $\cT_x\cM^\perp$ denotes the orthogonal complement of $\cT_x\cM$ in $\cE$.
\end{definition}

Below is a useful property of retraction.

\begin{proposition}
	\label{prop:retraction-regularity}
	For a retraction $\Retr(\cdot,\cdot)$ on a compact submanifold $\cM\subset\cE$, there exist constants $L_1, L_2>0$ such that the following inequalities hold
	\begin{eqnarray}
	\|\Retr(x,\xi) - x \| & \leq & L_1\|\xi\|, \\
	\|\Retr(x,\xi) - x -\xi\| & \leq & L_2\|\xi\|^2, \label{L-2}
	\end{eqnarray}
	for all $x\in\cM$ and all $\xi\in\cT_x\cM$.
\end{proposition}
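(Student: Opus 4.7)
The plan is to split both inequalities according to whether $\|\xi\|$ is small or large, using Taylor expansion together with a compactness covering argument for the small regime and the bounded diameter of the compact $\cM$ for the large regime. The natural obstacle is that the tangent bundle $\cT\cM$ is not compact even when $\cM$ is (tangent vectors are unbounded), so one cannot get uniform second-derivative bounds directly over $\cT\cM$; the two-regime split circumvents this.

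For the setup, I would fix $x\in\cM$ and $\xi\in\cT_x\cM$, and define $\phi_{x,\xi}(t):=\Retr(x,t\xi)$ on $[0,1]$. The retraction conditions \eqref{defn:1st-retraction} give $\phi_{x,\xi}(0)=x$ and $\phi'_{x,\xi}(0)=\xi$, so Taylor's formula with integral remainder yields
\begin{equation*}
\Retr(x,\xi)-x-\xi = \int_0^1 (1-t)\,\phi''_{x,\xi}(t)\,dt,
\end{equation*}
and hence $\|\Retr(x,\xi)-x-\xi\|\le \tfrac{1}{2}\sup_{t\in[0,1]}\|D^2_{\xi\xi}\Retr(x,t\xi)\|\,\|\xi\|^2$, where $D^2_{\xi\xi}\Retr(x,\cdot)$ denotes the second derivative of $\Retr$ in its tangent argument.

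To obtain a uniform constant for small $\xi$, I would use that $\Retr$ is $C^2$ on $\cT\cM$ together with compactness of $\cM$. Around each $x_0\in\cM$, continuity of $D^2_{\xi\xi}\Retr$ produces a coordinate neighborhood $U_{x_0}\subset\cM$ and a radius $\delta_{x_0}>0$ on which $\|D^2_{\xi\xi}\Retr(x,\eta)\|\le C_{x_0}$ whenever $x\in U_{x_0}$ and $\|\eta\|\le\delta_{x_0}$. Extracting a finite subcover of $\cM$ and taking the worst constant and smallest radius yields uniform $C>0$ and $\delta\in(0,1]$ with $\|D^2_{\xi\xi}\Retr(x,\eta)\|\le C$ for all $x\in\cM$ and all $\|\eta\|\le\delta$. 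Making this covering step work cleanly across charts is the only real technicality; it is standard but deserves care.

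With the uniform bound in hand, I would combine the two regimes. For $\|\xi\|\le\delta$, the Taylor estimate gives $\|\Retr(x,\xi)-x-\xi\|\le(C/2)\|\xi\|^2$, and the triangle inequality gives $\|\Retr(x,\xi)-x\|\le\|\xi\|+(C/2)\|\xi\|^2\le(1+C\delta/2)\|\xi\|$. For $\|\xi\|>\delta$, let $D:=\sup_{y,y'\in\cM}\|y-y'\|<\infty$ by compactness; since $\Retr(x,\xi)\in\cM$, one has $\|\Retr(x,\xi)-x\|\le D\le(D/\delta)\|\xi\|$ and
\begin{equation*}
\|\Retr(x,\xi)-x-\xi\|\le D+\|\xi\|\le \left(\frac{D}{\delta^2}+\frac{1}{\delta}\right)\|\xi\|^2.
\end{equation*}
Taking $L_1:=\max\{1+C\delta/2,\,D/\delta\}$ and $L_2:=\max\{C/2,\,D/\delta^2+1/\delta\}$ delivers both inequalities uniformly in $x\in\cM$ and $\xi\in\cT_x\cM$, completing the proof.
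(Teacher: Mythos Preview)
Your argument is correct. Note that the paper does not actually prove this proposition; it simply cites it as ``a by-product of Lemma~3 in \cite{RM_glo}.'' Your two-regime split---Taylor estimate with integral remainder on a disk bundle for small $\xi$, diameter bound for large $\xi$---is essentially the standard proof and is what the cited reference does.

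One minor simplification you might consider: the local-covering step is not really needed. Since $\cM$ is a compact submanifold of $\cE$, the disk bundle $D_\delta\cM:=\{(x,\eta)\in\cT\cM:\|\eta\|\le\delta\}$ is already compact for any fixed $\delta>0$ (it is closed and bounded in $\cE\times\cE$, using that $\cT\cM$ is closed). Thus you can simply fix, say, $\delta=1$, observe that the continuous map $(x,\eta)\mapsto\|D^2_{\xi\xi}\Retr(x,\eta)\|$ attains a finite maximum $C$ on $D_1\cM$, and proceed directly. This bypasses the chart-by-chart patchwork you flagged as ``the only real technicality.'' Your version is still correct; this just streamlines it.
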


The constants $L_1$ and $L_2$ may depend on the manifold and the dimensions. As an example, the polar retraction and the QR retraction for the Stiefel manifold $\St_{n,r}$ all satisfy this regularity condition with some universsal constants $L_1$ and $L_2$ independent of $n$ and $r$; see \cite{Stief:SVRG-Jiang}. This Proposition was initially shown as a by-product of Lemma 3 in \cite{RM_glo}.

\subsection{The 1st and 2nd Order Optimality Conditions for Riemannian Optimization}

Consider the unconstrained problem \eqref{prob:Riemannian_Opt}. If $x^*$ is the optimal solution of the above problem, then \begin{equation}
\label{opt-cond-1st}
\grad f(x^*) = 0.
\end{equation}
Furthermore, if $f$ is second-order continuously differentiable, then
\begin{equation}
\label{opt-cond-2nd}
\langle \Hess f(x^*)[\xi],\xi \rangle \geq 0, \,\, \forall \xi\in\cT_{x^*}\cM
\end{equation}
is also satisfied. We refer the interested readers to  \cite{RieOpt:Yang-etal-2012,Opt_Manif:Absil-etal-2009} for more information on these optimality conditions. Consequently, 
we call a point to be a first-order $\epsilon$-stationary point if
\be
\label{def:1st-epsolu}
\|\grad f(x^*)\|\leq \epsilon;
\ee
we call $x^*$ a second-order $\epsilon$-stationary point if, in addition, it also satisfies
\be
\label{def:2nd-epsolu}  \langle \Hess f(x^*)[\xi],\xi\rangle \geq -\sqrt{\epsilon}\|\xi\|^2 ,\,\, \forall \xi\in\cT_{x^*}\cM.
\ee

\subsection{The Extended Retraction}
To begin with, we first propose the \emph{extended retraction} and discuss its properties. As is defined in Definition \ref{defn:retraction}, a retraction is a mapping from the tangent bundle $\cT\cM$ to the manifold $\cM$ and is not defined outside of $\cT\cM$. Note that both $\cM$ and the tangent spaces $\cT_x\cM$ are embedded in $\cE$ and are parameterized with Euclidean coordinates, it will be convenient if we can use the usual differential operators in the Euclidean space. This requires the differentiability of the mapping in an open set of $\cE$ rather than restricted to $\cM$ or $\cT_x\cM$ with no interior. Hence we propose to work with the following \emph{extended retraction}, which extends $\Retr(x,\cdot)$ to the whole $\cE$ for all $x\in\cM$.

\begin{definition}[Extended Retraction] \label{def:extended-retraction}
	For a given
	retraction $\Retr(\cdot,\cdot):\cT\cM\rightarrow\cM$, and a given $x\in\cM$,  we can continuously and smoothly extend $\Retr(x,\cdot)$ from $\cT_x\cM$ to the whole space $\cE$ by defining the following extended retraction 
	\begin{equation}
	\label{lemma:ext-retr}
	\Retr(x,z) := \Retr(x, \Proj_{\cT_x\cM}(z)), \,\, \forall z\in\cE, \forall x\in\cM,
	\end{equation}
	which naturally extends $\Retr(x,\cdot)$ from $\cT_x\cM$ to the whole space $\cE$.
\end{definition}

Note that for any $z\in\cT_x\cM$, the extended retraction remains the original retraction. Without loss of generality, we make the following assumption.

\begin{assumption}	\label{assumption:ext-retr}
	For the retraction $\Retr(\cdot,\cdot)$ under consideration, we assume that they are already extended smoothly to the whole space $\cE$ by incorporating \eqref{lemma:ext-retr}. Consequently, the following relationship holds
	$$\Retr(x,\cdot) = \Retr(x,\cdot)\circ \Proj_{\cT_x\cM}.$$
\end{assumption}

\begin{proposition}
	\label{prop:Jacobi-Retration}
	Suppose that for a submanifold $\cM\subset\cE$ the retraction $\Retr(\cdot,\cdot)$ satisfies Assumption \ref{assumption:ext-retr}. Then,
	\begin{equation}
	\label{assumption:ext-retr-1}
	\J_{\xi}\Retr(x,0) = \Proj_{\cT_x\cM},\,\, \forall x\in\cM.
	\end{equation}
\end{proposition}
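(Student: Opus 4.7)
The strategy is to differentiate the defining identity $\Retr(x,z) = \Retr(x,\Proj_{\cT_x\cM}(z))$ of the extended retraction at $z=0$ using the chain rule, and then invoke the first-order property \eqref{defn:1st-retraction} of the original retraction. Conceptually, the extended retraction is the composition of a linear projection with a map whose derivative on $\cT_x\cM$ at the origin is already known to be the identity on $\cT_x\cM$; stitching these two facts together via the chain rule yields exactly $\Proj_{\cT_x\cM}$.

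More concretely, I would proceed in three short steps. First, since $\Proj_{\cT_x\cM}:\cE\to\cE$ is a (fixed) linear operator, it is its own Jacobian everywhere; in particular $\J_z \Proj_{\cT_x\cM}(0) = \Proj_{\cT_x\cM}$. Second, I would argue that the Jacobian of the \emph{original} retraction $\Retr(x,\cdot):\cT_x\cM\to\cM\subset\cE$ at the origin acts as the inclusion of $\cT_x\cM$ into $\cE$. This is precisely the content of \eqref{defn:1st-retraction}: for every $\xi\in\cT_x\cM$, differentiating $t\mapsto\Retr(x,t\xi)$ at $t=0$ gives $\xi$, so the directional derivative in any tangent direction $\xi$ equals $\xi$ itself. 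Third, applying the chain rule to $\Retr(x,z) = \Retr(x,\Proj_{\cT_x\cM}(z))$ at $z=0$ yields, for any $z\in\cE$,
\begin{equation*}
\J_\xi\Retr(x,0)[z] \;=\; \J_\eta\Retr(x,0)\bigl[\Proj_{\cT_x\cM}(z)\bigr] \;=\; \Proj_{\cT_x\cM}(z),
\end{equation*}
where the last equality uses that $\Proj_{\cT_x\cM}(z)\in\cT_x\cM$ together with the identification from the second step. This is the desired identity.

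The one subtlety worth pausing on is the second step: $\Retr(x,\cdot)$ is a priori only defined on the $d$-dimensional affine subspace $\cT_x\cM$, so one has to be careful about what ``Jacobian at $0$'' means before we extend. The cleanest way to handle this is to note that \eqref{defn:1st-retraction} says the differential of $\Retr(x,\cdot)$ at $0$, viewed as a linear map $\cT_x\cM\to\cE$, is the canonical inclusion; then after composing with $\Proj_{\cT_x\cM}$ from $\cE$ onto $\cT_x\cM$ we obtain a well-defined linear map $\cE\to\cE$, and the chain rule applies to the extended map in the ordinary Euclidean sense. I do not anticipate any real obstacle beyond carefully bookkeeping these domains; everything else is routine chain-rule calculation.
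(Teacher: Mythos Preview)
Your proposal is correct and follows essentially the same approach as the paper: both differentiate the extended-retraction identity $\Retr(x,z)=\Retr(x,\Proj_{\cT_x\cM}(z))$ at $z=0$, use the linearity of $\Proj_{\cT_x\cM}$, and invoke the first-order retraction property \eqref{defn:1st-retraction} to identify the inner derivative with the inclusion of $\cT_x\cM$. The paper just phrases this via a single directional-derivative computation $\frac{d}{dt}\Retr(x,t\eta)\big|_{t=0}=\Proj_{\cT_x\cM}(\eta)$ rather than stating the chain rule abstractly.
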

\begin{proof}
	For any $x\in \cM, \forall \eta\in\cE$, Assumption \ref{assumption:ext-retr} and \eqref{defn:1st-retraction} guarantee that
	$$\J_{\xi}\Retr(x,0)[\eta] = \frac{d}{dt}\Retr(x,t\eta)\bigg|_{t=0} = \frac{d}{dt}\Retr(x,t\Proj_{\cT_x\cM}(\eta))\bigg|_{t=0} = \Proj_{\cT_x\cM}(\eta).$$
	The result thus follows.
\end{proof}

\subsection{The Pullback and Its Properties}
For any smooth function $f$ on $\cM$ and a retraction $\Retr(\cdot,\cdot)$, the pullback of $f$ at point $x$, denoted by $\hat{f}_x$, is defined as
\begin{equation} \label{f-hat}
\hat{f}_x(\xi) = f(\Retr(x,\xi)), \forall \xi\in\cT_x\cM.
\end{equation}
It locally reparametrizes a function with the points on a subspace $\cT_x\cM\subset\cE$ instead of the points on the manifold $\cM$. When differentiation is performed, the pullback is automatically extended to the whole space $\cE$ through the extended retractions. The gradient and Hessian of the pullbacks connect to the Riemannian gradient and the Riemannian Hessian through the relationships shown in the next three propositions.
\begin{proposition}[Pullback Gradient]
	\label{prop:pullback-grad}
	Under Assumption \ref{assumption:ext-retr}, the pullback $\hat{f}_x$ satisfies
	\begin{equation}
	\label{prop:pullback-grad-1}
	\nabla_{\xi} \hat{f}_x(0) = \grad f(x), \, \forall x\in\cM.
	\end{equation}
\end{proposition}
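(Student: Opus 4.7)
The plan is to apply the chain rule to the definition $\hat{f}_x(\xi) = f(\Retr(x,\xi))$ and then invoke the Jacobian identity of the extended retraction from Proposition~\ref{prop:Jacobi-Retration}.

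More concretely, I would first observe that, since $f$ is smooth on $\cE$ and the extended retraction $\Retr(x,\cdot)$ is smooth on $\cE$ by Assumption~\ref{assumption:ext-retr}, the composition $\hat{f}_x$ is smooth on the whole space $\cE$. Differentiating via the chain rule yields
\[
\nabla_\xi \hat{f}_x(\xi) \;=\; \bigl(\J_\xi \Retr(x,\xi)\bigr)^\top \nabla f(\Retr(x,\xi)).
\]
Next, I would evaluate at $\xi = 0$. Using $\Retr(x,0) = x$ from \eqref{defn:1st-retraction} and the Jacobian identity $\J_\xi \Retr(x,0) = \Proj_{\cT_x\cM}$ from Proposition~\ref{prop:Jacobi-Retration}, I get
\[
\nabla_\xi \hat{f}_x(0) \;=\; \Proj_{\cT_x\cM}^\top \nabla f(x).
\]
Finally, since $\Proj_{\cT_x\cM}$ is an orthogonal projector and hence self-adjoint, $\Proj_{\cT_x\cM}^\top = \Proj_{\cT_x\cM}$, so the right-hand side equals $\Proj_{\cT_x\cM}(\nabla f(x)) = \grad f(x)$ by the definition \eqref{defn:Rie-grad} of the Riemannian gradient.

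There is no real obstacle here: the statement is essentially a bookkeeping corollary of the extended-retraction machinery built up in the previous subsection. The only subtlety is to make sure that one really is allowed to use the Euclidean chain rule, which is precisely what Assumption~\ref{assumption:ext-retr} and the extension \eqref{lemma:ext-retr} were introduced for, since without extending $\Retr(x,\cdot)$ off the tangent space $\cT_x\cM$ the Jacobian $\J_\xi \Retr(x,\xi)$ as a map on $\cE$ would not be defined.
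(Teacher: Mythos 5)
Your proposal is correct and follows essentially the same route as the paper's own proof: apply the Euclidean chain rule to $\hat f_x = f\circ\Retr(x,\cdot)$ and invoke Proposition \ref{prop:Jacobi-Retration} to replace $\J_\xi\Retr(x,0)$ by $\Proj_{\cT_x\cM}$, which yields $\grad f(x)$ by \eqref{defn:Rie-grad}. Your explicit remark that the projector is self-adjoint merely makes precise a transpose the paper leaves implicit; there is no substantive difference.
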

\begin{proof} By Proposition \ref{prop:Jacobi-Retration},
	$\nabla_{\xi} \hat{f}_x(0) = \nabla_{\xi} f(\Retr(x,0)) = \J_\xi \Retr(x,0)\left[\nabla f(x)\right] = \grad f(x).$
\end{proof}
The gradient of the pullback is equal to the Riemannian gradient. However, the Hessian of the pullback is not necessarily equal to the Riemannian Hessian.
\begin{proposition}[Pullback Hessian \cite{RM_glo}]
	\label{prop:pullback-Hess}
	Under Assumption \ref{assumption:ext-retr} and \eqref{f-hat},
	it holds that
	\begin{equation}
	\label{prop:pullback-Hess-1}
	\big\langle \nabla_{\xi}^2\hat{f}_x(0)(x)[\xi], \xi\big\rangle  = \big\langle \Hess f(x)[\xi], \xi \big\rangle	+ \big\langle \grad f(x), \frac{d^2}{dt^2}\Retr(x,t\xi)\big|_{t=0}\big\rangle, \, \forall \xi\in\cT_x\cM.
	\end{equation}
\end{proposition}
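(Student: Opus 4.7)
The plan is to compute the pullback Hessian by taking a direct Euclidean second derivative along the curve $\gamma(t):=\Retr(x,t\xi)$, and to reconcile the resulting expansion with the Riemannian Hessian via a product-rule computation on the projection $P(x):=\Proj_{\cT_x\cM}$. For $\xi\in\cT_x\cM$, Assumption~\ref{assumption:ext-retr} together with \eqref{f-hat} yields $\hat{f}_x(t\xi) = f(\gamma(t))$ with $\gamma(0)=x$ and $\gamma'(0)=\xi$ (by \eqref{defn:1st-retraction}), so the ordinary Euclidean chain rule in $\cE$ would give
\begin{equation*}
\big\langle \nabla_{\xi}^2\hat{f}_x(0)[\xi], \xi\big\rangle \;=\; \big\langle \nabla^2 f(x)\xi,\xi\big\rangle + \big\langle \nabla f(x),\gamma''(0)\big\rangle.
\end{equation*}

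Next I would expand the right-hand side of the target identity. Using $\grad f(x) = P(x)\nabla f(x)$ and the product rule, $\mathrm{D}\grad f(x)[\xi] = \mathrm{D}P(x)[\xi]\nabla f(x) + P(x)\nabla^2 f(x)\xi$. Combined with $P(x)\xi=\xi$ and the definition \eqref{defn:Rie-Hessian} of $\Hess f(x)$, this produces
\begin{equation*}
\big\langle \Hess f(x)[\xi],\xi\big\rangle \;=\; \big\langle \nabla^2 f(x)\xi,\xi\big\rangle + \big\langle \mathrm{D}P(x)[\xi]\nabla f(x),\xi\big\rangle.
\end{equation*}
Subtracting the two displays and using that $\nabla f(x)-\grad f(x)\in\cT_x\cM^{\perp}$, the proposition reduces to the single Weingarten-style identity
\begin{equation*}
\big\langle \nabla f(x)-\grad f(x),\gamma''(0)\big\rangle \;=\; \big\langle \mathrm{D}P(x)[\xi]\nabla f(x),\xi\big\rangle.
\end{equation*}

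This last identity is the crux, and I expect it to be the only nontrivial step. The plan is to differentiate the tangency relation $P(\gamma(t))\gamma'(t) = \gamma'(t)$ at $t=0$, which yields $\mathrm{D}P(x)[\xi]\xi + P(x)\gamma''(0) = \gamma''(0)$, i.e.\ $(I-P(x))\gamma''(0) = \mathrm{D}P(x)[\xi]\xi$. Since $P(x)$ is an orthogonal projection, both $P(x)$ and $\mathrm{D}P(x)[\xi]$ are symmetric, so the normal projection can be moved across the inner product to convert $\big\langle \nabla f(x)-\grad f(x),\gamma''(0)\big\rangle$ successively into $\big\langle \nabla f(x),\mathrm{D}P(x)[\xi]\xi\big\rangle$ and then into $\big\langle \mathrm{D}P(x)[\xi]\nabla f(x),\xi\big\rangle$, which is exactly what is needed. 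It is worth noting that the argument never invokes the second-order retraction condition \eqref{defn:2nd-retraction}, so the proposition holds for any first-order retraction extended via Assumption~\ref{assumption:ext-retr}.
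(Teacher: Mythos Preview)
The paper does not supply its own proof of this proposition; it only cites Appendix~C of~\cite{RM_glo}. So there is nothing in the paper to compare against, and your argument must stand on its own---which it does. The three ingredients you use, namely (i) the Euclidean second derivative of $f\circ\gamma$ to get $\langle\nabla^2 f(x)\xi,\xi\rangle+\langle\nabla f(x),\gamma''(0)\rangle$, (ii) the product-rule expansion $\mathrm{D}\grad f(x)[\xi] = \mathrm{D}P(x)[\xi]\nabla f(x) + P(x)\nabla^2 f(x)[\xi]$ together with $P(x)\xi=\xi$, and (iii) differentiation of the tangency identity $P(\gamma(t))\gamma'(t)=\gamma'(t)$ to obtain $(I-P(x))\gamma''(0)=\mathrm{D}P(x)[\xi]\xi$, combine cleanly. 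The symmetry of $P(y)$ for each $y\in\cM$, and hence of $\mathrm{D}P(x)[\xi]$, is exactly what lets you move the operator across the inner product in the last step. One minor point worth making explicit in a final write-up is that $P(\cdot)$ is being differentiated along the curve $\gamma$ lying in $\cM$ (where $P$ is defined and smooth), so $\mathrm{D}P(x)[\xi]$ is well defined for $\xi\in\cT_x\cM$ independently of any ambient extension. Your closing remark that the second-order retraction condition~\eqref{defn:2nd-retraction} is never invoked is also correct and consistent with the paper, which uses that condition only later in Corollary~\ref{corollary:pullback-Hess}.
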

The proof of this proposition can be found in Appendix C of \cite{RM_glo}. 

\begin{corollary}
	\label{corollary:pullback-Hess}
	If $\Retr(\cdot,\cdot)$ is a second-order retraction, then the pullback Hessian at $0$ coincides with the Riemannian Hessian on the tangent space, i.e.,
	$$\big\langle \nabla_{\xi}^2\hat{f}_x(0)(x)[\xi], \xi\big\rangle  = \big\langle \Hess f(x)[\xi], \xi \big\rangle, \, \forall \xi\in\cT_x\cM.$$
\end{corollary}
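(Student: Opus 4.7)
The plan is to deduce the corollary as an immediate consequence of Proposition \ref{prop:pullback-Hess} combined with the defining property of a second-order retraction given in equation \eqref{defn:2nd-retraction}. Since Proposition \ref{prop:pullback-Hess} already gives the identity
\[
\big\langle \nabla_{\xi}^2\hat{f}_x(0)[\xi], \xi\big\rangle  = \big\langle \Hess f(x)[\xi], \xi \big\rangle + \Big\langle \grad f(x), \tfrac{d^2}{dt^2}\Retr(x,t\xi)\big|_{t=0}\Big\rangle,
\]
the only thing to verify is that the second inner product on the right-hand side vanishes.

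The argument for this is a one-line orthogonality check. First I would note that $\grad f(x) = \Proj_{\cT_x\cM}(\nabla f(x)) \in \cT_x\cM$ by the very definition \eqref{defn:Rie-grad} of the Riemannian gradient. Second, by the assumption that $\Retr(\cdot,\cdot)$ is a second-order retraction, the vector $\frac{d^2}{dt^2}\Retr(x,t\xi)|_{t=0}$ lies in $\cT_x\cM^{\perp}$ for every $\xi\in\cT_x\cM$, by \eqref{defn:2nd-retraction}. Consequently the Euclidean inner product of these two vectors is zero, which eliminates the extra term and yields the desired equality.

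There is no real obstacle here; the entire statement is a book-keeping corollary that simply instantiates the general Pullback Hessian identity at a second-order retraction. The only minor point to take care of in writing is to be explicit about the ambient space in which the inner product is taken (the Riemannian inner product on $\cT_x\cM$ agrees with the Euclidean inner product on $\cE$, as stipulated in the definition of a Riemannian submanifold), so that the orthogonality relation between $\grad f(x) \in \cT_x\cM$ and $\frac{d^2}{dt^2}\Retr(x,t\xi)|_{t=0} \in \cT_x\cM^\perp$ can be invoked without ambiguity.
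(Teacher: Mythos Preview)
Your proposal is correct and follows essentially the same approach as the paper: invoke Proposition~\ref{prop:pullback-Hess}, use the second-order retraction property \eqref{defn:2nd-retraction} to place $\tfrac{d^2}{dt^2}\Retr(x,t\xi)\big|_{t=0}$ in $\cT_x\cM^\perp$, and conclude that its inner product with $\grad f(x)\in\cT_x\cM$ vanishes.
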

\begin{proof}
	By \eqref{defn:2nd-retraction}, we have
	$\frac{d^2}{dt^2}\Retr(x,t\xi)\big|_{t=0}\in\cT_x\cM^\perp.$
	Therefore, we have $$\big\langle \grad f(x),  \frac{d^2}{dt^2}\Retr(x,t\xi)\big|_{t=0}\big\rangle=0.$$
	In combination with Proposition \ref{prop:pullback-Hess}, this proves the corollary.
\end{proof}

\begin{corollary}
	\label{corollary:pullback-Hess-Grad-bound}
	Under Assumption \ref{assumption:ext-retr} and \eqref{f-hat},
	and suppose that $\Retr(\cdot,\cdot)$ satisfies Proposition \ref{prop:retraction-regularity} with parameter $L_2$ introduced in \eqref{L-2}. Then
	\begin{equation}
	\big|\big\langle(\nabla_{\xi}^2\hat{f}_x(0)-\Hess f(x))[\eta],\eta\big\rangle\big| \leq 2L_2\|\grad f(x)\|\|\eta\|^2, \,  \forall \eta\in\cT_x\cM,
	\forall x\in\cM.
	\end{equation}
\end{corollary}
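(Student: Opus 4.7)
The plan is to combine the exact formula from Proposition~\ref{prop:pullback-Hess} with the second-order remainder estimate \eqref{L-2} from Proposition~\ref{prop:retraction-regularity}. Specifically, Proposition~\ref{prop:pullback-Hess} already identifies the difference of the two quadratic forms:
\[
\big\langle(\nabla_{\xi}^2\hat{f}_x(0)-\Hess f(x))[\eta],\eta\big\rangle = \Big\langle \grad f(x),\, \tfrac{d^2}{dt^2}\Retr(x,t\eta)\big|_{t=0}\Big\rangle,
\]
so by Cauchy--Schwarz the problem reduces to showing
\[
\Big\|\tfrac{d^2}{dt^2}\Retr(x,t\eta)\big|_{t=0}\Big\| \le 2L_2\|\eta\|^2.
\]

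To obtain this bound, I would Taylor-expand the curve $t\mapsto \Retr(x,t\eta)$ around $t=0$. Using $\Retr(x,0)=x$ and $\tfrac{d}{dt}\Retr(x,t\eta)|_{t=0}=\eta$ from \eqref{defn:1st-retraction}, I get
\[
\Retr(x,t\eta) - x - t\eta = \tfrac{t^2}{2}\,\tfrac{d^2}{dt^2}\Retr(x,t\eta)\big|_{t=0} + o(t^2).
\]
Proposition~\ref{prop:retraction-regularity} applied with the tangent vector $t\eta\in\cT_x\cM$ yields
\[
\|\Retr(x,t\eta)-x-t\eta\| \le L_2\,t^2\|\eta\|^2
\]
for all sufficiently small $t>0$. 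Dividing by $t^2/2$ and letting $t\to 0^+$ eliminates the $o(t^2)$ term and gives the desired bound on the second derivative.

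Combining these two steps immediately produces the claimed inequality. The proof is essentially routine once Proposition~\ref{prop:pullback-Hess} is in hand; the only subtlety is making sure that Proposition~\ref{prop:retraction-regularity} can indeed be invoked along the scaled direction $t\eta$, which is fine because $t\eta\in\cT_x\cM$ whenever $\eta\in\cT_x\cM$, so no extended-retraction argument is needed here. I do not anticipate any substantive obstacle beyond writing these two short steps cleanly.
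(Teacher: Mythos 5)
Your proposal is correct and follows essentially the same route as the paper: both reduce the claim via Proposition~\ref{prop:pullback-Hess} and Cauchy--Schwarz to bounding $\big\|\tfrac{d^2}{dt^2}\Retr(x,t\eta)\big|_{t=0}\big\|$, and both obtain the bound $2L_2\|\eta\|^2$ by applying \eqref{L-2} to the tangent vector $t\eta$ and passing to the limit $t\to0$ (your Taylor-expansion phrasing is just the paper's second-order difference-quotient limit in different words). No gap.
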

\begin{proof}
	For any fixed $x\in\cM$ and $\eta\in\cT_x\cM$, let us denote $Y(t) = \Retr(x,t\eta).$
	Consequently $Y(0) = x$ and  $Y'(0) = \eta$. By the continuity of the norm,
	\begin{eqnarray*}
		\|Y''(0)\| & = & \left\|\lim_{t\rightarrow0}\frac{Y(t)-Y(0)-Y'(0)t}{\half t^2}\right\|  = \lim_{t\rightarrow0}\frac{\big\|Y(t)-Y(0)-Y'(0)t\big\|}{\half t^2} \\  &\leq & \lim_{t\rightarrow0}\frac{L_2t^2\|\eta\big\|^2}{\half t^2} = 2L_2\|\eta\|^2 .
	\end{eqnarray*}
	On the other hand, by Proposition \ref{prop:pullback-Hess},
	$$\big|\big\langle(\nabla_{\xi}^2\hat{f}_x(0)-\Hess f(x))[\eta],\eta\big\rangle\big| \leq \|\grad f(x)\| \cdot \|Y''(0)\|.$$
	Combining this with the bound on $\|Y''(0)\|$ yields the desired result.
\end{proof}

For the iteration complexity of the Riemannian gradient descent or the Riemannian trust-region methods, it is sufficient to know the gradient of the pullback at the origin, i.e., $\nabla_{\xi}\hat{f}_x(0)$. However, to derive faster local convergence we will also need to analyze the pullback gradient in a neighbourhood of $0$.
\begin{proposition}[Pullback Gradient in a Neighbourhood]
	\label{prop:pullback-grad-Neighbourhood}
	Under Assumption \ref{assumption:ext-retr} and \eqref{f-hat},
	for any $x\in\cM$ and for any $\xi\in\cT_x\cM$ with $\|\xi\|$ sufficiently small, we have
	\begin{equation}
	\label{prop:pullback-grad-Neighbourhood-1}
	\|\grad f(y)\|\leq \frac{1}{1-\|\Proj_{\cT_y\cM}-\J_\xi \Retr(x,\xi)\|_2}\|\nabla_{\xi}\hat{f}_x(\xi)\|,
	\end{equation}
	where $y = \Retr(x,\xi)$, and $\|\cdot\|_2$ denotes the matrix spectral norm.
\end{proposition}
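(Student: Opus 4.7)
The plan is to combine the chain rule on $\hat{f}_x=f\circ\Retr(x,\cdot)$ with the observation that the Jacobian $\J_\xi\Retr(x,\xi)$ has range contained in $\cT_y\cM$, and then linearize around the ``perfect'' Jacobian $\Proj_{\cT_y\cM}$ to which it reduces at $\xi=0$ (by Proposition \ref{prop:Jacobi-Retration}, since $y=x$ there).

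First I would apply the Euclidean chain rule on the extended retraction to get
$\nabla_\xi \hat{f}_x(\xi) = \J_\xi\Retr(x,\xi)^\top \nabla f(y)$, with $y=\Retr(x,\xi)$. Next I would observe that, since $\Retr(x,\cdot)$ maps $\cE$ into $\cM$, the range of $\J_\xi\Retr(x,\xi)$ is contained in $\cT_y\cM$; this lets me insert $\Proj_{\cT_y\cM}$ in front of $\nabla f(y)$ without changing the value, giving $\nabla_\xi\hat{f}_x(\xi) = \J_\xi\Retr(x,\xi)^\top \grad f(y)$. Then I would write the trivial decomposition
$\J_\xi\Retr(x,\xi)^\top \grad f(y) = \Proj_{\cT_y\cM}\grad f(y) + \bigl(\J_\xi\Retr(x,\xi)-\Proj_{\cT_y\cM}\bigr)^\top \grad f(y) = \grad f(y) + \bigl(\J_\xi\Retr(x,\xi)-\Proj_{\cT_y\cM}\bigr)^\top \grad f(y),$
using self-adjointness of the orthogonal projection and $\Proj_{\cT_y\cM}\grad f(y)=\grad f(y)$.

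The triangle inequality then yields
$\|\grad f(y)\| \le \|\nabla_\xi \hat{f}_x(\xi)\| + \|\J_\xi\Retr(x,\xi)-\Proj_{\cT_y\cM}\|_2\,\|\grad f(y)\|,$
after which rearranging gives the claimed bound, provided $\|\J_\xi\Retr(x,\xi)-\Proj_{\cT_y\cM}\|_2<1$. The ``$\|\xi\|$ sufficiently small'' hypothesis is exactly what guarantees this: at $\xi=0$ we have $y=x$ and by Proposition \ref{prop:Jacobi-Retration} the norm in question is $0$, so continuity of $\xi\mapsto \J_\xi\Retr(x,\xi)-\Proj_{\cT_{\Retr(x,\xi)}\cM}$ (which in turn uses smoothness of the retraction and continuity of $y\mapsto \Proj_{\cT_y\cM}$) gives it is strictly less than $1$ in a neighborhood.

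The only real subtlety is the step where $\nabla f(y)$ is replaced by $\grad f(y)$; I would justify it by noting explicitly that for any $u\in\cE$, writing $\Retr(x,\xi+tu)\in\cM$ and differentiating at $t=0$ shows $\J_\xi\Retr(x,\xi)[u]\in\cT_y\cM$, hence $\langle \J_\xi\Retr(x,\xi)[u],\nabla f(y)\rangle = \langle u, \J_\xi\Retr(x,\xi)^\top\Proj_{\cT_y\cM}\nabla f(y)\rangle = \langle u, \J_\xi\Retr(x,\xi)^\top\grad f(y)\rangle$. Beyond this bookkeeping, nothing in the argument is delicate — no Lipschitz or curvature estimates are invoked — so the main obstacle is purely a careful use of where the various vectors live.
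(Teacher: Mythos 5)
Your proposal is correct and follows essentially the same route as the paper: write $\nabla_\xi\hat f_x(\xi)=\J_\xi\Retr(x,\xi)^\top\nabla f(y)$, use the fact that $\mathrm{Ran}(\J_\xi\Retr(x,\xi))\subset\cT_y\cM$ to replace $\nabla f(y)$ by $\grad f(y)$, and then rearrange the resulting triangle inequality, with smallness of $\|\Proj_{\cT_y\cM}-\J_\xi\Retr(x,\xi)\|_2$ coming from continuity and Proposition \ref{prop:Jacobi-Retration} at $\xi=0$. The only cosmetic difference is that you justify the range containment by differentiating the curve $t\mapsto\Retr(x,\xi+tu)\subset\cM$ directly, whereas the paper passes through a local defining equation $\phi$ of $\cM$ near $y$; both are valid.
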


Before proving this proposition, let us consider an example.
Consider the retraction to the unit sphere. In that case, the retraction is $\Retr(x,\xi) := \frac{x+ \xi }{\|x+ \xi\|}$. By direct calculation, we have $\nabla_{\xi}\hat{f}_x(\xi) = \frac{1}{\|x+\xi\|}(I-yy^\top)\nabla f(y) = \frac{1}{\|x+\xi\|}\grad f(y)$. Therefore, if $\|\xi\|$ is small enough, then the difference between $\nabla_{\xi}\hat{f}_x(\xi)$ and $\grad f(y)$ can be controlled.
\begin{proof}
	Suppose $\cE = \R^n$, $\mbox{\rm dim}(\cM) = d<n$. For any $x\in\cM$ and any $\xi\in\cT_x\cM$, let $\phi:\cE\rightarrow\R^{n-d}$ be a smooth local equation of $\cM$ around point $y:= \Retr(x,\xi)$ (see e.g.\ \cite{Retraction}). That is, there exists a local neighbourhood $U_y$ of $y$ in $\cE$, satisfying
	$z\in\cM\cap U_y \Longleftrightarrow \phi(z) = 0, z\in U_y.$ For special examples, such as the unit sphere and Stiefel manifold, this local equation is actually global. Therefore, $\exists\delta>0 \mbox{ }$ such that $\forall \eta\in\cE, \|\eta\|^2 = 1$, we have $\phi(\Retr(x,\xi+t\eta))=0, \forall t\in (-\delta,\delta)$. Hence
	$$0 = \frac{d}{dt}\phi(\Retr(x,\xi+t\eta))\big|_{t=0} = \J\phi(y)\J_{\xi}\Retr(x,\xi)\eta, \,\, \forall \|\eta\|^2 = 1.$$
	Consequently, $\J\phi(y)\J_{\xi}\Retr(x,\xi) = 0$, which means that
	\begin{equation}
	\mathrm{Ran}(\J_{\xi}\Retr(x,\xi)) \subset \mathrm{Ran}(\J\phi(y)^\top)^{\perp} = \mathrm{Ran}(\Proj_{\cT_y\cM}). \nonumber
	\end{equation}
	Therefore
	$\Proj_{\cT_y\cM}\J_{\xi}\Retr(x,\xi) = \J_{\xi}\Retr(x,\xi).$
	By direct calculation we have
	\begin{eqnarray*}
		\grad f(y) - \nabla_{\xi}\hat{f}_x(\xi)& = & \Proj_{\cT_y\cM}\nabla f(y) - \J_\xi \Retr(x,\xi)^\top\nabla f(y) \\
		& = & \Proj_{\cT_y\cM}^2\nabla f(y) - \J_\xi \Retr(x,\xi)^\top\Proj_{\cT_y\cM}\nabla f(y) \\
		& = & \left(\Proj_{\cT_y\cM} - \J_\xi \Retr(x,\xi)^\top \right) \grad f(y) .
	\end{eqnarray*}
	Since $y = x$ when $\xi = 0$ and $\J_\xi \Retr(x,\xi)^\top = \Proj_{\cT_x\cM}$, it follows that
	$$\|\Proj_{\cT_y\cM} - \J_\xi \Retr(x,\xi)^\top\|_2<1$$
	when $\xi$ is sufficiently small.
	Therefore,
	$$\|\grad f(y)\|-\|\nabla_{\xi}\hat{f}_x(\xi)\|\leq \|\Proj_{\cT_y\cM} - \J_\xi \Retr(x,\xi)^\top\|_2\|\grad f(y)\|.$$
	The proposition follows by rearranging the terms in the above inequality.
\end{proof}
If $\cM$ is compact, then we have the following:
\begin{corollary}[Proposition \ref{prop:pullback-grad-Neighbourhood} Refined]
	\label{corollary:pullback-grad-Neighbourhood}
	Under Assumption \ref{assumption:ext-retr} and \eqref{f-hat},
	supposing that $\cM$ is compact, then there exists a constant $C_g>0$ such that for all $x\in \cM$ and for all $\xi\in\cT_x\cM$ with $\|\xi\|_F\leq C_g$, we have
	\begin{equation}
	\label{theorem:pullback-grad-Neighbourhood-1}
	\|\grad f(\Retr(x,\xi))\|_F\leq 2\|\nabla_{\xi}\hat{f}_x(\xi)\|.
	\end{equation}
\end{corollary}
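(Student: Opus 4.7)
The plan is to strengthen Proposition \ref{prop:pullback-grad-Neighbourhood} by upgrading its pointwise ``sufficiently small'' requirement to a uniform one via a compactness argument, and to tighten the constant to $1/2$ so that $(1-1/2)^{-1}=2$ reproduces the prefactor on the right-hand side.

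Concretely, define the continuous map $\Phi:\cT\cM\to\R$ by
\[
\Phi(x,\xi) \;:=\; \bigl\|\Proj_{\cT_{\Retr(x,\xi)}\cM} - \J_{\xi}\Retr(x,\xi)^{\top}\bigr\|_2 .
\]
The first thing I would verify is that $\Phi$ is indeed continuous on $\cT\cM$: the extended retraction is $C^2$ by Assumption \ref{assumption:ext-retr}, so $(x,\xi)\mapsto \J_\xi\Retr(x,\xi)$ is continuous; the orthogonal projector onto $\cT_y\cM$ depends continuously on $y\in\cM$ for a $C^k$ submanifold (this is standard, and in the implicit-equation model it is simply the continuous map $y\mapsto I - \J\phi(y)^\top(\J\phi(y)\J\phi(y)^\top)^{-1}\J\phi(y)$); and $y=\Retr(x,\xi)$ depends continuously on $(x,\xi)$. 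Second, by Proposition \ref{prop:Jacobi-Retration} together with symmetry of orthogonal projectors, $\J_\xi\Retr(x,0)^{\top}=\Proj_{\cT_x\cM}$, so $\Phi(x,0)=0$ for every $x\in\cM$.

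Now I would use compactness. Fix any $R>0$ for which $\Retr$ is smooth on $K_R:=\{(x,\xi)\in\cT\cM:\,\|\xi\|\le R\}$. Since $\cM$ is compact, $K_R$ is a closed bounded subset of $\cE\times\cE$, hence compact. The continuous function $\Phi$ is therefore uniformly continuous on $K_R$, so there exists $C_g\in(0,R]$ such that
\[
\|\xi\|\le C_g \;\Longrightarrow\; \Phi(x,\xi)\;\le\;\Phi(x,0)+\tfrac12 \;=\;\tfrac12 ,\qquad \forall x\in\cM .
\]
Plugging this bound into Proposition \ref{prop:pullback-grad-Neighbourhood} (whose proof actually produces exactly the factor $(1-\Phi(x,\xi))^{-1}$) yields
\[
\|\grad f(\Retr(x,\xi))\| \;\le\; \frac{1}{1-\tfrac12}\,\|\nabla_\xi \hat f_x(\xi)\| \;=\; 2\|\nabla_\xi \hat f_x(\xi)\|,
\]
which is the desired inequality.

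The one point that requires a little care is the continuity of $y\mapsto\Proj_{\cT_y\cM}$ used in Step~1; I would handle it by the implicit-equation characterization $\cT_y\cM=\mathrm{Ran}(\J\phi(y)^\top)^\perp$ from \eqref{defn:tan-space}, which (together with constant rank of $\J\phi$ on $\cM$) makes the projector a continuous function of $y$. Everything else reduces to uniform continuity on a compact set, so no nontrivial estimates are needed.
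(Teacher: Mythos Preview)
Your argument is correct. The paper does not supply a separate proof for this corollary; it simply states it as a direct refinement of Proposition~\ref{prop:pullback-grad-Neighbourhood} under compactness and then remarks on the value of $C_g$ in the Stiefel case. Your compactness/uniform-continuity argument on the set $K_R=\{(x,\xi)\in\cT\cM:\|\xi\|\le R\}$, using $\Phi(x,0)=0$ from Proposition~\ref{prop:Jacobi-Retration} and the continuity of $y\mapsto\Proj_{\cT_y\cM}$, is precisely the natural way to make that implicit step rigorous, and it lands on the same bound $(1-\tfrac12)^{-1}=2$.
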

The constant $C_g$ depends on the manifold $\cM$ and the retraction. For instance,
$C_g = 1/8.62$ if $\cM$ is the Stiefel manifold with polar retraction.

Next we consider the Lipschitz continuity of the pullback Hessian at the origin. That is, we wish to establish for any $x\in \cM$ an inequality in the form of
\begin{equation}
\label{pullback-Lips}
\big|\big\langle(\nabla_{\xi}^2\hat{f}_x(\eta) - \nabla_{\xi}^2\hat{f}_x(0))[\nu],\nu\big\rangle\big| \leq L_H\|\eta\|, \,\,\, \forall \eta\in\cT_x\cM, \ \forall \nu\in\cT_x\cM, \|\nu\|=1,
\end{equation}
where $L_H$ is independent of $x$. Such property is non-trivial because the tangent bundle is noncompact (unbounded).
However, it is true in the case of the Stiefel manifold with polar retraction, and we shall explicitly compute this constant in later sections.
Fortunately, we only need a weaker form of local Lipschitz continuity,
which is true and is sufficient for our analysis of the general case:
\begin{lemma}[Local Lipschitz Continuity of Pullback Hessian]
	\label{lemma:Pullback-Lips-Loc}
	For any $R>0$, there exists a constant $L_H^R>0$ such that condition \eqref{pullback-Lips} holds for $\|\eta\| \leq R$:
	\begin{equation}
	\label{pullback-Loc-Lips}
	\big|\big\langle(\nabla_{\xi}^2\hat{f}_x(\eta) - \nabla_{\xi}^2\hat{f}_x(0))[\nu],\nu\big\rangle\big| \leq L_H^R\|\eta\|, \,\, \forall \eta\in\cT_x\cM, \|\eta\|\leq R,  \forall \nu\in\cT_x\cM, \|\nu\|=1.
	\end{equation}
\end{lemma}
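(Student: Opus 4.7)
The plan is to obtain the desired local Lipschitz constant by combining smoothness of the pullback as a function on the ambient Euclidean space $\cE$ with a compactness-based uniformization in $x\in\cM$.

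First, under Assumption \ref{assumption:ext-retr}, $\hat{f}_x(\xi) = f(\Retr(x,\xi))$ is a $C^3$ function of $\xi$ on the whole ambient space $\cE$, provided $f\in C^3$ and $\Retr$ is sufficiently smooth on the tangent bundle (the smoothness of the extension is inherited from the smoothness of $\Proj_{\cT_x\cM}$). Moreover, the joint map $(x,\xi)\mapsto\hat{f}_x(\xi)$ is jointly $C^3$: the projection $\Proj_{\cT_x\cM}$ depends smoothly on $x\in\cM$ because, for a submanifold locally cut out by $\phi=0$ with $\J\phi(x)$ of full rank, one has the explicit formula $\Proj_{\cT_x\cM} = I - \J\phi(x)^\top(\J\phi(x)\J\phi(x)^\top)^{-1}\J\phi(x)$, which is manifestly smooth in $x$.

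Second, for any fixed $x\in\cM$ and any $\eta\in\cT_x\cM$ with $\|\eta\|\leq R$, the fundamental theorem of calculus gives
\begin{equation*}
\nabla_\xi^2\hat{f}_x(\eta) - \nabla_\xi^2\hat{f}_x(0) = \int_0^1 \nabla_\xi^3\hat{f}_x(t\eta)[\eta]\,dt,
\end{equation*}
and hence $\bigl\|\nabla_\xi^2\hat{f}_x(\eta) - \nabla_\xi^2\hat{f}_x(0)\bigr\|_2 \leq \|\eta\|\cdot \sup_{\|\xi\|\leq R}\|\nabla_\xi^3\hat{f}_x(\xi)\|_2$. For any unit $\nu\in\cT_x\cM$, Cauchy--Schwarz then yields the quadratic-form bound asserted in \eqref{pullback-Loc-Lips}.

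Third, to obtain uniformity in $x$, I invoke compactness: the map $(x,\xi)\mapsto \|\nabla_\xi^3\hat{f}_x(\xi)\|_2$ is continuous on the compact set $\cM\times\{\xi\in\cE:\|\xi\|\leq R\}$, so it attains a finite maximum $L_H^R$, which serves as the advertised uniform constant. If $\cM$ is not compact, the same argument still supplies a local, $x$-dependent constant, which is all the lemma literally requires for a fixed reference point.

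The main obstacle I anticipate is precisely verifying the joint smoothness in $(x,\xi)$: for each fixed $x$ the function $\hat{f}_x$ is smooth in $\xi$, but the $x$-dependence enters through the extended retraction and through $\Proj_{\cT_x\cM}$, so one has to ensure that the third $\xi$-derivative remains jointly continuous in $(x,\xi)$ (so that the compactness argument can be applied). Once the smooth dependence of $\Proj_{\cT_x\cM}$ on $x$ is in hand via the explicit formula above, the remaining steps reduce to routine calculus and continuity-on-a-compact-set.
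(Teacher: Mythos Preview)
Your proposal is correct and follows essentially the same approach as the paper: the paper's proof is a one-line remark that the lemma is ``a direct consequence of the Lipschitz continuity of $\nabla^2 f$, the smoothness of $\Retr(\cdot,\cdot)$ and the compactness of $\{\eta:\eta\in\cT_x\cM,\ x\in\cM,\ \|\eta\|\leq R\}$,'' and you have simply fleshed out those ingredients via the fundamental theorem of calculus and a continuity-on-a-compact-set argument. The only minor discrepancy is that you invoke a third $\xi$-derivative (implicitly assuming $f\in C^3$), whereas the paper states the weaker hypothesis that $\nabla^2 f$ is Lipschitz; your argument adapts immediately by replacing the integral of $\nabla_\xi^3\hat f_x$ with a direct Lipschitz bound on $\nabla_\xi^2\hat f_x$ obtained from the chain rule.
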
\vspace{-0.15cm}
This lemma is a direct consequence of the Lipschitz continuity of $\nabla^2f$, the smoothness of $\Retr(\cdot,\cdot)$ and the compactness of $\{\eta:\:\eta\in\cT_x\cM, x\in\cM, \|\eta\|\leq R \}$.
Finally, we present a local Lipschitz property on the Riemannian Hessian in the next lemma, whose proof is in Appendix~\ref{appendix-B}.
\begin{lemma}[Local Lipschitz Continuity of Riemannian Hessian]
	\label{lemma:Lip-Rie-Hess}
	Let $\cM\subset\cE$ be a compact submanifold and let $f$ be a smooth function with Lipschitz continuous Hessian. Then there exist constants $d_{\cM}, D > 0$ such that for any $x,y\in\cM$, if $\|x-y\| \leq d_{\cM}$ then
	\begin{equation}
	\label{lemma:Lip-Rie-Hess-1}
	|\lambda_{\min}^\cM(\Hess f(x)) - \lambda_{\min}^\cM(\Hess f(y))| \leq D\|x-y\|
	\end{equation}
	where $\lambda_{\min}^\cM(\Hess f(x)):= \inf_{\xi\in\cT_x\cM}\bigg\{\frac{\langle \Hess f(x)[\xi],\xi\rangle}{\|\xi\|^2}\bigg\}$ and $\|\cdot\|$ is the Euclidean norm.
\end{lemma}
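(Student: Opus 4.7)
My strategy is to represent the Riemannian Hessian as a smoothly varying scalar-valued bilinear form on the ambient space $\cE$ whose dependence on the base point can be controlled via Lipschitz constants of $\nabla f$, $\nabla^2 f$, and the tangent-space projector $P(x):=\Proj_{\cT_x\cM}$, and then to overcome the tangent-space mismatch by mapping the unit sphere of $\cT_x\cM$ to that of $\cT_y\cM$ through a normalized projection. Since $\cM$ is a compact $C^k$ submanifold with $k\geq 2$, $P(\cdot)$ admits a $C^1$ extension $\tilde{P}$ to an open tubular neighborhood of $\cM$, so there exist constants $L_P, M_P>0$ with $\|P(x)-P(y)\|_2\leq L_P\|x-y\|$ and $\|\mathrm{D}\tilde{P}(x)\|_{\mathrm{op}}\leq M_P$ for all $x,y\in\cM$.

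First, I would derive the representation
\begin{equation*}
\langle \Hess f(x)[z],z\rangle = \langle \mathrm{D}\tilde{P}(x)[z]\,\nabla f(x), z\rangle + \langle \nabla^2 f(x)[z], z\rangle =: H(x,z),\quad z\in\cT_x\cM,
\end{equation*}
obtained by writing $\grad f = \tilde{P}\nabla f$, applying the chain rule to compute $\mathrm{D}\grad f(x)[z]$, pre-projecting by $P(x)$, and then using $P(x)z=z$ together with $P(x)^\top=P(x)$. The right-hand side $H(x,z)$ is smoothly defined for every $z\in\cE$, and uniform boundedness of $\nabla f$, $\nabla^2 f$, $\tilde{P}$, $\mathrm{D}\tilde{P}$ on the compact manifold $\cM$, combined with the global Lipschitz continuity of $\nabla^2 f$, yields constants $L_x, L_v>0$ for which $|H(x,v)-H(y,v)|\leq L_x\|v\|^2\|x-y\|$ and $|H(x,v)-H(x,w)|\leq L_v(\|v\|+\|w\|)\|v-w\|$ hold for all $x,y\in\cM$ and bounded $v,w\in\cE$.

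Next, I would set $d_\cM:=1/(2L_P)$, and for $x,y\in\cM$ with $\|x-y\|\leq d_\cM$ and any unit $\xi\in\cT_x\cM$ define $\eta:=P(y)\xi/\|P(y)\xi\|\in\cT_y\cM$. Since $\|P(y)\xi-\xi\|=\|(P(y)-P(x))\xi\|\leq L_P\|x-y\|\leq 1/2$, this $\eta$ is well-defined, of unit length, and satisfies $\|\eta-\xi\|\leq 2L_P\|x-y\|$. Letting $\xi^*\in\cT_x\cM$ attain $\lambda_{\min}^\cM(\Hess f(x))$ on the unit sphere (possible by its compactness) and $\eta^*$ be its normalized image in $\cT_y\cM$, a telescoping of $H(y,\eta^*)-H(x,\xi^*)$ through $H(y,\xi^*)$ would give
\begin{equation*}
\lambda_{\min}^\cM(\Hess f(y)) - \lambda_{\min}^\cM(\Hess f(x)) \leq H(y,\eta^*) - H(x,\xi^*) \leq (L_x + 4L_v L_P)\|x-y\|,
\end{equation*}
and a symmetric argument would yield the reverse inequality, establishing the claim with $D:=L_x+4L_v L_P$.

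The main obstacle is the tangent-space mismatch: because $\lambda_{\min}^\cM$ is a minimum of a quadratic form over two \emph{different} subspaces at $x$ and $y$, no direct operator-norm comparison is possible. The resolution needs two ingredients working in tandem, namely the smooth ambient extension $H(x,z)$, which decouples base-point dependence from tangent-space variation and makes the two Lipschitz estimates meaningful, and the near-isometric correspondence $\xi\mapsto\eta$ between the unit tangent spheres at $x$ and $y$, which is only well-defined when $\|x-y\|$ lies below the manifold-dependent threshold $d_\cM$. Once both ingredients are in place, the bound reduces to two routine Lipschitz estimates on $H$.
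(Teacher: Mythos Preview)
Your proof is correct and takes a genuinely different route from the paper. The paper works in local charts: it builds a finite closed cover $\{U_i\}$ of $\cM$ by compactness, expresses $B_x = P_x\bigl(\nabla^2 f(x) - \sum_j \mu_j(x)\nabla^2\phi_j^{(i)}(x)\bigr)P_x$ via the Lagrange-multiplier form of the Riemannian Hessian (with $\phi^{(i)}$ the local defining equations), shows $x\mapsto B_x$ is Lipschitz on each $U_i\cap\cM$, and then invokes Weyl's eigenvalue perturbation inequality. Because $B_x$ carries $n-d$ spurious zero eigenvalues from the normal directions, the paper needs a three-way case analysis on the signs of $\lambda_{\min}^\cM(B_x)$ and $\lambda_{\min}^\cM(B_y)$ to identify which ordered eigenvalue of $B_x$ actually equals $\lambda_{\min}^\cM$, and a separate Lebesgue-number argument to manufacture the threshold $d_\cM$.

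Your approach replaces all of this with two ingredients: the ambient bilinear extension $H(x,z)$, which lets you vary the base point and the direction independently, and the near-isometry $\xi\mapsto P(y)\xi/\|P(y)\xi\|$ between the unit tangent spheres, which directly handles the subspace mismatch without ever looking at the full spectrum of $B_x$. This is cleaner and avoids both the covering and the eigenvalue case split. The trade-off is that the paper's chart-based formula makes the constants explicit in terms of the defining equations, which is what they later exploit for the Stiefel manifold where $d_\cM=+\infty$; your global-projector argument gives existence of $D$ and $d_\cM$ but less immediate access to their values. One minor point: your Lipschitz bound on $x\mapsto H(x,v)$ uses Lipschitz continuity of $\mathrm{D}\tilde{P}$, which needs $P$ to be $C^{1,1}$ rather than merely $C^1$; this requires the manifold to be a bit smoother than the $k\geq 2$ you cite, but the paper's proof tacitly needs the same (it differentiates $\nabla^2\phi^{(i)}$ and calls the result Lipschitz), so this is not a discrepancy.
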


\section{Cubic Regularized Newton's Method} \label{sec:algo}

\subsection{The Basic Algorithm and Its Convergence}

We shall now expand on the local Lipschitz property \eqref{pullback-Loc-Lips}. The following are some estimations of the residuals.

\begin{proposition}
	\label{prop:3rd-descent-lemma}
	Let $\Retr(\cdot,\cdot)$ be a second-order retraction on $\cM$. Suppose condition \eqref{pullback-Loc-Lips} holds for all $x\in\cM$ with a uniform constant $L_H^R$ for the pullbacks $\hat{f}_x$. Then
	\begin{equation}
	\label{prop:Lip-1}
	\big\|\nabla_{\xi}\hat{f}_x(\xi) - \nabla_{\xi}\hat{f}_x(0) - \nabla^2_{\xi}\hat{f}_x(0)[\xi]  \big\|\leq \frac{L_H^R}{2}\|\xi\|^2,
	\end{equation}
	\begin{equation}
	\label{prop:Boumal-Lip}
	\big|\hat{f}_x(\xi) - f(x) - \langle \grad f(x),\xi \rangle - \frac{1}{2}\big\langle \Hess f(x)[\xi], \xi \big\rangle  \big|\leq \frac{L_H^R}{6}\|\xi\|^3
	\end{equation}
	for all $\xi\in\cT_\cM, \|\xi\|\leq R,$ where the constant $L_H^R$ is independent of $x$.
\end{proposition}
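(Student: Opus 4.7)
\textbf{Proof plan for Proposition \ref{prop:3rd-descent-lemma}.}

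The plan is to obtain both inequalities from standard Taylor--type identities applied to the pullback $\hat f_x$ on the tangent space $\cT_x\cM$, using \eqref{pullback-Loc-Lips} as the sole quantitative input. Throughout, I would keep $x\in\cM$ fixed and regard $\hat f_x$ as a smooth function of $\xi\in\cT_x\cM$; by Assumption \ref{assumption:ext-retr}, $\hat f_x$ depends only on $\Proj_{\cT_x\cM}(\cdot)$, so $\nabla_\xi \hat f_x$ lies in $\cT_x\cM$ and $\nabla^2_\xi \hat f_x$, being a symmetric operator, is effectively determined by its action as a quadratic form on $\cT_x\cM$.

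For inequality \eqref{prop:Lip-1}, I would use the fundamental theorem of calculus along the segment from $0$ to $\xi$,
\[
\nabla_\xi \hat f_x(\xi) - \nabla_\xi \hat f_x(0) - \nabla^2_\xi \hat f_x(0)[\xi]
= \int_0^1 \bigl(\nabla^2_\xi \hat f_x(t\xi) - \nabla^2_\xi \hat f_x(0)\bigr)[\xi]\, dt,
\]
then pass to norms and use that the operator norm of a symmetric operator on $\cT_x\cM$ equals the supremum of its quadratic form on unit vectors. Since $\|t\xi\|\le R$ for $t\in[0,1]$, assumption \eqref{pullback-Loc-Lips} gives $\|\nabla^2_\xi \hat f_x(t\xi) - \nabla^2_\xi \hat f_x(0)\|_{\mathrm{op}} \le L_H^R t\|\xi\|$, and integrating $L_H^R t\|\xi\|^2$ from $0$ to $1$ yields the stated bound $\tfrac{L_H^R}{2}\|\xi\|^2$.

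For inequality \eqref{prop:Boumal-Lip}, I would apply the integral form of second-order Taylor expansion,
\[
\hat f_x(\xi) = \hat f_x(0) + \langle \nabla_\xi \hat f_x(0),\xi\rangle + \int_0^1 (1-t)\,\bigl\langle \nabla^2_\xi \hat f_x(t\xi)[\xi],\xi\bigr\rangle\,dt,
\]
and rewrite the integrand by adding and subtracting $\nabla^2_\xi \hat f_x(0)$ so that the quadratic term $\tfrac12 \langle \nabla^2_\xi \hat f_x(0)[\xi],\xi\rangle$ pops out (using $\int_0^1 (1-t)\,dt = \tfrac12$). The residual is $\int_0^1 (1-t)\langle(\nabla^2_\xi \hat f_x(t\xi) - \nabla^2_\xi \hat f_x(0))[\xi],\xi\rangle\,dt$, and \eqref{pullback-Loc-Lips} with $\eta=t\xi$, $\nu = \xi/\|\xi\|$ bounds the integrand by $L_H^R t\|\xi\|^3$, so the residual is at most $L_H^R\|\xi\|^3\int_0^1 t(1-t)\,dt = \tfrac{L_H^R}{6}\|\xi\|^3$. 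To convert this into the stated form, I invoke Proposition \ref{prop:pullback-grad} to replace $\nabla_\xi \hat f_x(0)$ by $\grad f(x)$, and Corollary \ref{corollary:pullback-Hess} (which requires the second-order retraction hypothesis) to replace $\langle \nabla^2_\xi \hat f_x(0)[\xi],\xi\rangle$ by $\langle \Hess f(x)[\xi],\xi\rangle$.

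The only subtle point is the passage from the quadratic-form hypothesis \eqref{pullback-Loc-Lips} to an operator-norm bound used in part \eqref{prop:Lip-1}; this is the one place where symmetry of the Hessian and the extended-retraction convention really enter. Everything else is routine Taylor calculus, and the second-order retraction condition enters only at the final cosmetic step of \eqref{prop:Boumal-Lip} via Corollary \ref{corollary:pullback-Hess}.
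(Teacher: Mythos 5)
Your proposal is correct and follows essentially the route the paper itself relies on: the paper omits a detailed proof, remarking only that \eqref{prop:Boumal-Lip} follows from the local Lipschitz property \eqref{pullback-Loc-Lips} as in \cite{RM_glo}, and your Taylor-with-integral-remainder argument is precisely the standard way to make that remark rigorous. You also correctly identify and resolve the one genuine subtlety, namely converting the quadratic-form bound \eqref{pullback-Loc-Lips} on $\cT_x\cM$ into an operator-norm bound via the symmetry of the pullback Hessian and the extended-retraction convention, and you rightly note that the second-order retraction hypothesis is needed only to replace $\nabla^2_\xi\hat f_x(0)$ by $\Hess f(x)$ via Corollary \ref{corollary:pullback-Hess} in \eqref{prop:Boumal-Lip}.
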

We remark here that estimate \eqref{prop:Boumal-Lip} first appeared in \cite{RM_glo} without condition \eqref{pullback-Loc-Lips} and the constraint that $\|\xi\|\leq R$. As a consequence, it now follows from \eqref{pullback-Lips}. 
This proposition immediately suggests that for any $\sigma > L_H^R$,
\begin{equation}
\label{fun-upbound}
m_{x,\sigma}(\xi) := \hat{f}_x(0) + \langle \grad f(x),\xi \rangle + \frac{1}{2}\langle \Hess f(x)[\xi],\xi \rangle + \frac{\sigma}{6}\|\xi\|^3
\end{equation}
is an upper bound of $\hat{f}_x(\xi)$ in the subspace $\cT_x\cM$ if $\|\xi\|\leq R$. Therefore, whenever $R$ is large enough so as to ensure the global minimum of $m_{x,\sigma}(\xi)$ is in the interior of the disk $\xi\in\cT_\cM, \|\xi\|\leq R$, then following the principle of majorization-minimization it makes sense to minimize 
$m_{x,\sigma}(\xi)$ over $\cT_x\cM$ as an iterative subroutine, instead of minimizing $\hat{f}_x(\xi)$ itself. 
More specifically, the iterates run as:
\begin{equation}
\begin{cases}
\xi_{k} := \arg\min m_{x_k,\sigma}(\xi), \mbox{ subject to } \xi\in\cT_x\cM, \\
x_{k+1} := \Retr(x_k,\xi_k).
\end{cases}
\end{equation}
Note that if we denote the projection onto $\cT_x\cM$ to be operator $P_x$, then the constrained cubic subproblem can be equivalently rewritten as
\begin{eqnarray}
\label{prob:cubic-sub}
\xi_{k} & = & \arg\min_\xi \hat{m}_{x_k,\sigma}(\xi) \\ &:= & \arg\min_\xi \hat{f}_x(0) + \langle \grad f(x),\xi \rangle + \frac{1}{2}\langle P_x\circ \Hess f(x)\circ P_x[\xi],\xi \rangle + \frac{\sigma}{6}\|\xi\|^3.
\end{eqnarray}
In principle, we should also replace $\grad f(x)$ by $P_x[\grad f(x)]$. Since $\grad f(x)$ is in $\cT_x\cM$, the projection becomes redundant.
This means that we essentially end up with an unconstrained problem over $\cE$, which can be solved to global optimality; see e.g.\cite{Cubic:Nestrov,Cubic:Carmon}.

Below we present a number of constants to be used later.
Let $\nabla^2f$ be Lipschitz continuous on the convex hull $\mathrm{Conv}(\cM)$ with Lipschitz constant $\ell_H$. Define 	$\ell_f := \max_{x\in\mathrm{Conv}(\cM)}\|\nabla^2f(x)\|_2$
as the Lipschitz constant for $\nabla f$ over $\mathrm{Conv}(\cM)$, denote $G := \max_{x\in\cM}\|\nabla f(x)\|_F$, and $k_B := \max_{x\in\cM}\max_{\xi\in\cT_x\cM,\|\xi\|=1}\|\Hess f(x)[\xi]\|.$

\begin{algorithm2e}[H]
	\caption{Cubic Regularized Newton's Method over Riemannian Manifold}
	\label{algo:Cubic}
	{\it Input:} an initial point $x_0\in\cM$, a retraction $\Retr(\cdot,\cdot)$, a parameter $\sigma>\max\left\{\left(\sqrt{10L_2k_B + \frac{2}{3}L_H^R+9L_2^2G}+3L_2\sqrt{G}\right)^2,1\right\}$ where $L^R_H$ is defined in Lemma \ref{lemma:Pullback-Lips-Loc} with $R = 3k_B + 3\sqrt{G}$
	and an iteration number $T$. \\
	\For{$k = 0,2,...,T-1$ }{
		Solve $\xi_{k} = \arg\min_\xi \hat{m}_{x_k,\sigma}(\xi).$ \\
		Update $x_{k+1} = \Retr(x_k,\xi_k).$\\
	}
	{\it Output:} Let $k^* := \arg\min_{0\leq k\leq T-1}\|\xi_{k}\|^3,$ and return $x_{k^*+1}$.
\end{algorithm2e}

\begin{theorem}
	\label{theorem:Manifold-complexity} Let the sequence ${(X_k,\xi_k)}$ be generated by Algorithm \ref{algo:Cubic}, with the parameters chosen to satisfy
	$\sigma > \max\left\{\left(\sqrt{10L_2k_B + \frac{2}{3}L_H^R+9L_2^2G}+3L_2\sqrt{G} \right)^2,1\right\}$ and $R = 3k_B + 3\sqrt{G}$, if  we choose to set  $k^* := \arg\min_{0\leq k\leq T-1}\|\xi_{k}\|^3$ and 
	$$T \geq \frac{4(f(x_0)-\underline{f})}{\tau_1} \cdot\max \left\{\frac{1}{C_g^3},\left(\frac{L_1}{d_\cM}\right)^{3},\frac{\left(DL_1+\sigma/2\right)^{\frac{3}{2}}}{\epsilon^{\frac{3}{2}}}, \frac{\tau_2^{\frac{3}{2}}}{\epsilon^{\frac{3}{2}}}\right\}, $$
	where $C_g$ is defined in Corollary \ref{corollary:pullback-grad-Neighbourhood}, $d_\cM$ is defined in Lemma \ref{lemma:Lip-Rie-Hess}, $\underline{f}$ is any lower bound of the optimal value, $\tau_1 = \sigma - 10L_2k_B - 6L_2\sqrt{\sigma G} - \frac{2}{3}L_H^R$ and $\tau_2 = \sigma+L_H^R+10L_2k_B+6L_2\sqrt{\sigma G}$. Then, the returned point $x_{k^*+1}$ is a second-order $\epsilon$-stationary point satisfying \eqref{def:1st-epsolu} and \eqref{def:2nd-epsolu}.
\end{theorem}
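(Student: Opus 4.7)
The proof follows the standard cubic-regularized-Newton template, with three novel ingredients reflecting the Riemannian setting: the pullback cubic upper bound from Proposition \ref{prop:3rd-descent-lemma}, the mismatch between the pullback Hessian and the Riemannian Hessian controlled via Corollary \ref{corollary:pullback-Hess-Grad-bound}, and the transition of the optimality estimates from $x_k$ to $x_{k+1}$ handled via Corollary \ref{corollary:pullback-grad-Neighbourhood} and Lemma \ref{lemma:Lip-Rie-Hess}. The plan has four steps.

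First, I would establish a uniform step-size bound $\|\xi_k\|\le R=3k_B+3\sqrt{G}$, which makes Proposition \ref{prop:3rd-descent-lemma} applicable at $\xi_k$. This follows by taking norms in the first-order optimality condition $\grad f(x_k)+P_{x_k}\Hess f(x_k)[\xi_k]+(\sigma/2)\|\xi_k\|\xi_k=0$ and using the uniform bounds $\|\grad f\|\le G$, $\|\Hess f\|_{\mathrm{op}}\le k_B$ on the compact manifold, yielding $\|\xi_k\|\le (2k_B+\sqrt{2\sigma G})/\sigma$, which is below $R$ once $\sigma\ge 1$.

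Second, I would prove the per-iteration descent $f(x_{k+1})-f(x_k)\le -(\tau_1/6)\|\xi_k\|^3$. Applying the cubic upper bound \eqref{prop:Boumal-Lip}, then replacing the pullback Hessian by the Riemannian Hessian at a cost of $L_2\|\grad f(x_k)\|\|\xi_k\|^2$ via Corollary \ref{corollary:pullback-Hess-Grad-bound}, and finally invoking $\hat m_{x_k,\sigma}(\xi_k)\le \hat m_{x_k,\sigma}(0)=f(x_k)$, one obtains
\[
f(x_{k+1})-f(x_k)\le -\tfrac{\sigma}{6}\|\xi_k\|^3+L_2\|\grad f(x_k)\|\|\xi_k\|^2+\tfrac{L_H^R}{6}\|\xi_k\|^3.
\]
The quadratic error term is reabsorbed into $\|\xi_k\|^3$ by using $\|\grad f(x_k)\|\le k_B\|\xi_k\|+(\sigma/2)\|\xi_k\|^2$ (from first-order optimality) together with the step-size bound from the previous step; the resulting constant in front of $\|\xi_k\|^3$ is exactly $\tau_1/6$. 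Telescoping then gives $\min_{0\le k\le T-1}\|\xi_k\|^3\le 6(f(x_0)-\underline{f})/(\tau_1 T)$.

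Third, I would convert smallness of $\|\xi_{k^*}\|$ into the first- and second-order $\epsilon$-stationarity at $x_{k^*+1}$. For the first-order part, Proposition \ref{prop:3rd-descent-lemma} gives $\nabla_\xi\hat f_{x_{k^*}}(\xi_{k^*})=\nabla_\xi\hat f_{x_{k^*}}(0)+\nabla_\xi^2\hat f_{x_{k^*}}(0)[\xi_{k^*}]+O(L_H^R\|\xi_{k^*}\|^2)$; substituting the first-order optimality of the subproblem and using Corollary \ref{corollary:pullback-Hess-Grad-bound} yields $\|\nabla_\xi\hat f_{x_{k^*}}(\xi_{k^*})\|=O(\tau_2\|\xi_{k^*}\|^2)$, and Corollary \ref{corollary:pullback-grad-Neighbourhood} (whose domain of validity forces $\|\xi_{k^*}\|\le C_g$, explaining the $1/C_g^3$ entry inside the max) then transfers this bound to $\|\grad f(x_{k^*+1})\|\le 2\|\nabla_\xi\hat f_{x_{k^*}}(\xi_{k^*})\|$. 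For the second-order part, the PSD condition $\nabla^2 \hat m_{x_{k^*},\sigma}(\xi_{k^*})\succeq 0$ on $\cT_{x_{k^*}}\cM$ gives $\lambda_{\min}^\cM(\Hess f(x_{k^*}))\ge -(\sigma/2)\|\xi_{k^*}\|$; Lemma \ref{lemma:Lip-Rie-Hess}, valid once $\|x_{k^*+1}-x_{k^*}\|\le L_1\|\xi_{k^*}\|\le d_\cM$ (which forces the $(L_1/d_\cM)^3$ entry), then transfers the bound to $\lambda_{\min}^\cM(\Hess f(x_{k^*+1}))\ge -(DL_1+\sigma/2)\|\xi_{k^*}\|$. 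The four terms inside the max in the choice of $T$ are exactly what is needed so that $\|\xi_{k^*}\|$ is simultaneously small enough to activate Corollary \ref{corollary:pullback-grad-Neighbourhood}, activate Lemma \ref{lemma:Lip-Rie-Hess}, certify the second-order bound $(DL_1+\sigma/2)\|\xi_{k^*}\|\le\sqrt{\epsilon}$, and certify the first-order bound $2\tau_2\|\xi_{k^*}\|^2\le\epsilon$.

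The main obstacle is the careful constants bookkeeping in Step 2: the gradient-times-step correction $L_2\|\grad f(x_k)\|\|\xi_k\|^2$ produced by the Hessian mismatch does not a priori look like a cubic term, and absorbing it into $\|\xi_k\|^3$ requires bootstrapping through the first-order optimality of $\xi_k$ together with the uniform step-size estimate. This is what gives rise to the somewhat intricate form of $\tau_1$ and $\tau_2$; once these are in hand, the remainder of the argument is a routine verification that the four conditions on $T$ suffice.
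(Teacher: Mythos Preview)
Your high-level plan matches the paper's proof closely: the four-step template (step bound, descent, telescoping, transfer to $x_{k^*+1}$ via Corollary~\ref{corollary:pullback-grad-Neighbourhood} and Lemma~\ref{lemma:Lip-Rie-Hess}) is exactly what the paper does, and your Steps~1,~3,~4 are essentially the paper's arguments with only cosmetic differences in constants.

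There is, however, a genuine discrepancy in Step~2. You obtain the cubic descent by invoking the crude bound $\hat m_{x_k,\sigma}(\xi_k)\le \hat m_{x_k,\sigma}(0)$, which yields a leading term $-\tfrac{\sigma}{6}\|\xi_k\|^3$. The paper instead substitutes the first-order identity $g_k^\top\xi_k+\xi_k^\top B_k\xi_k+\tfrac{\sigma}{2}\|\xi_k\|^3=0$ into the Taylor upper bound (written with the \emph{pullback} Hessian $H_k$, not the Riemannian one), rewrites, and then uses the second-order condition $B_k+\tfrac{\sigma}{2}\|\xi_k\|I\succeq 0$ to drop a nonnegative quadratic; this produces the sharper leading term $-\tfrac{\sigma}{4}\|\xi_k\|^3$. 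After absorbing $L_2\|g_k\|\,\|\xi_k\|^2$ via $\|g_k\|\le(\tfrac{5}{2}k_B+\tfrac{3}{2}\sqrt{\sigma G})\|\xi_k\|$ (which uses the Cartis step bound $\|\xi_k\|\le\frac{3}{\sigma}\max\{k_B,\sqrt{\sigma G}\}$), the paper lands exactly on $\tfrac{\tau_1}{4}\|\xi_k\|^3$. Your route gives a valid descent inequality, but its coefficient is $\tfrac{1}{6}(\sigma-L_H^R-15L_2k_B-9L_2\sqrt{\sigma G})$, not $\tau_1/6$ as you assert; in particular the bound ``$T\ge \tfrac{4(f(x_0)-\underline f)}{\tau_1}\cdot\max\{\ldots\}$'' in the statement would not follow from your argument without adjusting the constants.

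A related technical point: you cite \eqref{prop:Boumal-Lip} for the cubic upper bound, but Proposition~\ref{prop:3rd-descent-lemma} assumes a second-order retraction, which the theorem does not. The paper avoids this by starting directly from Lemma~\ref{lemma:Pullback-Lips-Loc} (Taylor bound with the pullback Hessian $H_k$) and then paying the $L_2\|g_k\|\,\|\xi_k\|^2$ correction via Corollary~\ref{corollary:pullback-Hess-Grad-bound} for the $H_k-B_k$ mismatch. Your narrative ``replace the pullback Hessian by the Riemannian Hessian'' is the right instinct, but it should be applied to the pullback-Hessian Taylor bound, not to \eqref{prop:Boumal-Lip} which already features $\Hess f$.
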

\begin{proof}
	For the ease of notation, denote $g_k:= \grad f(x_k) \mbox{ and } B_k = P_{x_k}\Hess f(x_k)P_{x_k},$ where $P_{x_k}$ is the orthogonal projection onto $\cT_{x_{k}}\cM$.   
	Then the subproblem can be represented as
	$	\xi_{k} =  \arg\min_\xi g_k^\top\xi + \frac{1}{2}\xi^\top B_k\xi + \frac{\sigma}{6}\|\xi\|^3.$	According to Lemma 2.2 in \cite{Cubic:Cartis-1}, $\|\xi_{k}\|\leq \frac{3}{\sigma}\max\left\{k_B,\sqrt{\sigma\|g_k\|}\right\}. $
	Since $\|g_k\|\leq \|\nabla f(x_k)\|\leq G,\sigma\geq1$, we have
	\begin{equation}
	\label{thm:Gen-convgce-2}
	\|\xi_{k}\|\leq \frac{3}{\sigma}\max\{k_B,\sqrt{\sigma G}\}\leq   \frac{3k_B}{\sigma} + 3\sqrt{\frac{G}{\sigma}}\leq R.
	\end{equation}
	This validate our choice of $R$. By the global optimality conditions provided in \cite{Cubic:Nestrov}
	\begin{equation}
	\label{thm:convgce-1}
	(B_k+\lambda^*_kI)\xi_k + g_k = 0,\quad
	\lambda^*_k = \frac{\sigma}{2}\|\xi_{k}\|,\quad
	B_k+\lambda^*_kI \succeq 0.
	\end{equation} 
	The first two conditions of \eqref{thm:convgce-1} further result in
	\begin{equation}
	\label{thm:convgce-2}
	g_k^\top\xi_{k}+\xi_{k}^\top B_k\xi_k+\frac{\sigma}{2}\|\xi_k\|^3 = 0.
	\end{equation}

	In the absence of second-order retraction, $\Hess f(x_k)$ is no longer equal to the Hessian of the pullback $\nabla_{\xi}^2\hat{f}_{x_k}(0)$ and the majorization property
	$\hat{m}_{x_k,\sigma}(\xi) \geq \hat{f}_{x_k}(\xi)$
	no longer holds. In this case, let us denote the matrix $H_k: = P_{x_k}\nabla_{\xi}^2\hat{f}_{x_k}(0)P_{x_k}.$
	We start from  Lemma \ref{lemma:Pullback-Lips-Loc} with constant $L_H^R$ and get
	\begin{eqnarray*}
		\hat{f}_{x_k}(\xi_{k}) & \leq & \hat{f}_{x_k}(0) + g_k^\top\xi_{k} + \frac{1}{2}\xi_{k}^\top H_k\xi_{k}+ \frac{L_H^R}{6}\|\xi_{k}\|^3 \\
		& = & \hat{f}_{x_k}(0) -\xi_{k}^\top B_k\xi_{k} + \frac{1}{2}\xi_{k}^\top H_k\xi_{k}+ \left(\frac{L_H^R}{6}-\frac{\sigma}{2}\right)\|\xi_{k}\|^3 \\
		& = & \hat{f}_{x_k}(0) -\half\xi_{k}^\top \left(B_k + \frac{\sigma}{2}\|\xi_{k}\|I\right)\xi_{k} + \frac{1}{2}\xi_{k}^\top(H_k-B_k)\xi_{k}- \left(\frac{\sigma}{4}-\frac{L_H^R}{6}\right)\|\xi_{k}\|^3.
	\end{eqnarray*}
	Note that by condition \eqref{thm:convgce-1} we have  $\half\xi_{k}^\top(B_k + \frac{\sigma}{2}\|\xi_{k}\|I)\xi_{k}\geq 0$,
	and by Corollary \ref{corollary:pullback-Hess-Grad-bound} we have
	$\frac{1}{2}\xi_{k}^\top(H_k-B_k)\xi_{k} \leq  L_2\|g_k\|\|\xi_{k}\|^2.$
	Combining these inequalities leads to
	\begin{equation}
	\label{thm:Gen-convgce-1}
	f(x_k) - f(x_{k+1})\geq \left(\frac{\sigma}{4}-\frac{L_H^R}{6} -  L_2 \cdot\frac{\|g_k\|}{\|\xi_{k}\|}\right)\|\xi_{k}\|^3.
	\end{equation} 
	Recall that the optimality condition in \eqref{thm:convgce-1} gives $-g_k = (B_k+\frac{\sigma}{2}\|\xi_{k}\|I)\xi_{k}.$
	By combining this equality with \eqref{thm:Gen-convgce-2}, we have
	\begin{equation}
	\label{thm:Gen-convgce-3}
	\|g_k\|\leq \left(\|B_k\|_2 + \frac{\sigma}{2}\|\xi_{k}\|\right)\|\xi_{k}\|\leq \left(\frac{5}{2}k_B+\frac{3}{2}\sqrt{\sigma G}\right)\|\xi_{k}\|.
	\end{equation}
	Putting \eqref{thm:Gen-convgce-1} and \eqref{thm:Gen-convgce-3} together yields
	\begin{equation}
	\label{thm:Gen-convgce-4}
	f(x_k) - f(x_{k+1})\geq \frac{1}{4}\left(\sigma - 10L_2k_B - 6L_2\sqrt{\sigma G} - \frac{2}{3}L_H^R\right)\|\xi_{k}\|^3 = \frac{\tau_1}{4}\|\xi_k\|^3.
	\end{equation}
	When $\sigma > \left(\sqrt{10L_2k_B + \frac{2}{3}L_H^R+9L_2^2G}+3L_2\sqrt{G} \right)^2$, the decrease is positive. Summing the inequalities up yields
	$$\sum_{k\in\mathcal{I}}\|\xi_{k}\|^3 \leq \frac{4(f(x_0) - \underline{f})}{\tau_1},$$
	where the existence of $\underline{f}$ is guaranteed by the compactness of $\cM$. Following the way that $T$ and $k^*$ are set, we have
	\begin{equation}
	\label{thm:Gen-convgce-5}
	\|\xi_{k^*}\| \leq \min\left\{C_g,d_\cM/L_1,\tau_2^{-\half}\epsilon^\half,(DL_1+\sigma/2)^{-\half}\epsilon^\half\right\}.
	\end{equation}
	It remains to prove that it is an $\epsilon$-solution. To this end, note that
	$$\begin{cases}
	\|\nabla_{\xi}\hat{f}_{x_{k^*}}(\xi_{k^*}) - g_{k^*} - H_{k^*}\xi_{k^*}\| \leq \frac{L_H^R}{2}\|\xi_{k^*}\|^2, \\
	\|g_{k^*} + B_{k^*}\xi_{k^*}\| =  \frac{\sigma}{2}\|\xi_{k^*}\|^2,\\
	\|(H_{k^*} - B_{k^*})\xi_{k^*}\|\leq 2L_2\|g_{k^*}\|\|\xi_{k^*}\| \leq L_2(5k_B+3\sqrt{\sigma G})\|\xi_{k^*}\|^2,
	\end{cases}$$
	where the first inequality is due to Lemma \ref{lemma:Pullback-Lips-Loc} and  Proposition \ref{prop:3rd-descent-lemma}, the equality is due to condition \eqref{thm:convgce-1}, and the last inequality is due to Corollary \ref{corollary:pullback-Hess-Grad-bound} and
	\eqref{thm:Gen-convgce-3}. Combining these, we have 
	$$\|\nabla_{\xi}\hat{f}_{x_{k^*}}(\xi_{k^*})\| \leq \frac{\tau_2}{2}\|\xi_{k^*}\|^2.$$
	Since $\|\xi_{k^*}\|\leq C_g$, we further obtain
	\begin{equation}
	\label{thm:Gen-convgce-6}
	\|\grad f(x_{k^*+1})\|\leq 2\|\nabla_{\xi}\hat{f}_{x_{k^*}}(\xi_{k^*})\| \leq \tau_2\|\xi_{k^*}\|^2\leq \epsilon.
	\end{equation}
	By condition \eqref{thm:convgce-1} and Lemma \ref{lemma:Lip-Rie-Hess},
	$$\lambda_{\min}(B_{k^*+1})\geq\lambda_{\min}(B_{k^*}) - DL_1\|\xi_{k^*}\|\geq -\left(DL_1+\frac{\sigma}{2}\right)\|\xi_{k^*}\| \geq -\sqrt{\epsilon}.$$
	Hence $\big\langle \Hess f(x_{k^*+1})[\eta], \eta\big\rangle \geq   -\sqrt{\epsilon}\|\eta\|^2, \,\, \forall \eta\in\cT_{x_{k^*+1}}\cM.$
	The proof is complete.
\end{proof}

\subsection{Speeding Up Local Convergence}
In this subsection, we investigate the possibility of speeding up the theoretical convergence rate under some additional conditions. One such condition is the so-called gradient-dominant property.

\subsubsection{Gradient-Dominant Functions}

\begin{definition}[Locally Gradient-Dominant Function]
	\label{defn:Grad-Domnt-func}
	For a smooth function $f$ defined on a manifold $\cM$, if for any local minimum point $\bar{x}$, there exists a neighbourhood $U_{\bar{x}}$ of $\bar{x}$ such that for all $x\in U_{\bar{x}}$, we have
	\begin{equation}
	\label{defn:grad-dominant-condition}
	f(x) - f(\bar{x})\leq \tau_f\|\grad f(x)\|^p,
	\end{equation}
	where $\tau_f>0$ is some universal constant independent of $\bar{x}$, then we call $f$ to be a locally gradient-dominant function of degree $p$.
\end{definition}
This definition stipulates that in a neighbourhood of a local minimum point, the function value is dominated by the size of Riemannian gradient.
As an example, consider the principal component analysis (PCA), which can be posed as
$$\min_X \,\,\, \langle A,XX^\top\rangle,~~ \mbox{subject to }  X\in \St_{n,r}.$$
The objective function of the problem actually satisfies the gradiant domination property with degree $p=2$. One can further prove that every second-order $\epsilon$-stationary point is close to the global optimum and every exact second-order stationary point is a global minimum.

\begin{theorem}
	\label{theorem:Grad-Domnt-speedups}
	Let the sequence $\{x_k,\xi_k\}$ be generated by Algorithm \ref{algo:Cubic}. Assume that $\{x_k\}$ is converging to a local minimum $\bar{x}$ and the whole sequence $\{x_k\}$ lies within the neighbourhood $U_{\bar{x}}$ where the objective function $f$ is locally gradient-dominant with degree $p$. Define
	\begin{equation}
	\label{theorem:Grad-Domnt-speedups-zk}
	z_k = \tau_f^{\frac{3}{2p-3}}\left(4/\tau_1\right)^{\frac{2p}{2p-3}} \tau_2^{\frac{3p}{2p-3}}\cdot(f(x_k)-f(\bar{x})),
	\end{equation}
	where $\tau_1, \tau_2$ are defined in Theorem \ref{theorem:Manifold-complexity}. We have: 
	\begin{itemize}
		\item For $p = \frac{3}{2}$, it holds that 
		$z_k\leq (\half)^kz_{0}$.
		\item For $1\leq p<\frac{3}{2}$, it holds that:
		\begin{itemize}
			\item if $z_0\geq 2^\frac{3}{3-2p}$, then $z_k \leq z_{k-1}^\frac{2p}{3}$;
			\item if $z_{0}\leq 2^\frac{3}{3-2p}$, then after at most $t = 1+\lceil\frac{3}{3-2p}\rceil$ steps, we have $z_{t}<1$;
			\item if $z_0 < 1$, then by letting $\beta = \frac{2p}{3-2p}$ we have $z_k\leq \frac{1}{((1-2^{-1/\beta})k+1)^\beta} = \mathcal{O}(k^{-\beta})$. In particular, when $p=1$ then $z_k\leq\mathcal{O}(k^{-2})$. 
		\end{itemize}
		\item For $p>\frac{3}{2}$, it holds that:
		\begin{itemize}
			\item if $z_0\geq 1$, then $\frac{z_k}{z_{k-1}} \leq  \left(1+z_0^\frac{3-2p}{2p}\right)^{-1}<1$;  
			\item if $z_0 < 1$, then $z_k \leq z_{k-1}^\frac{2p}{3}$. 
		\end{itemize}
	\end{itemize}
\end{theorem}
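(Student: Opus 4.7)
The plan is to reduce the entire analysis to a single one-step recursion on the normalized optimality gap $z_k$, and then analyze that recursion case by case. Three ingredients from the proof of Theorem \ref{theorem:Manifold-complexity} feed in directly: the cubic descent $f(x_k)-f(x_{k+1})\geq \tfrac{\tau_1}{4}\|\xi_k\|^3$; the post-step gradient estimate $\|\grad f(x_{k+1})\|\leq \tau_2\|\xi_k\|^2$ obtained from Proposition \ref{prop:3rd-descent-lemma}, Corollary \ref{corollary:pullback-Hess-Grad-bound} and Corollary \ref{corollary:pullback-grad-Neighbourhood}; and the local gradient-dominant inequality $a_{k+1}\leq \tau_f\|\grad f(x_{k+1})\|^p$ from Definition \ref{defn:Grad-Domnt-func}, where $a_k:= f(x_k)-f(\bar x)$. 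Eliminating $\|\xi_k\|$ and $\|\grad f(x_{k+1})\|$ in succession gives
\[
a_k-a_{k+1} \;\geq\; \frac{\tau_1}{4\tau_2^{3/2}\tau_f^{3/(2p)}}\, a_{k+1}^{3/(2p)},
\]
and rescaling $a_k=\kappa z_k$ for the $\kappa$ implicit in \eqref{theorem:Grad-Domnt-speedups-zk} absorbs every constant, leaving the clean recursion
\[
z_{k-1} \;\geq\; z_k+z_k^{q},\qquad q := \tfrac{3}{2p},
\]
which immediately implies the monotonicity $z_k\leq z_{k-1}$ that I will use repeatedly.

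For $p=3/2$ we have $q=1$, so $z_{k-1}\geq 2z_k$ and hence $z_k\leq 2^{-k}z_0$. For $p>3/2$ we have $q<1$; I would factor the recursion as $z_{k-1}\geq z_k(1+z_k^{q-1})$ and combine monotonicity $z_k\leq z_0$ with $q-1<0$ to get $z_k^{q-1}\geq z_0^{q-1}=z_0^{(3-2p)/(2p)}$, yielding the constant-factor contraction $z_k/z_{k-1}\leq (1+z_0^{(3-2p)/(2p)})^{-1}$. If instead $z_0<1$, then $z_k<1$ forces $z_k^q>z_k$, hence $z_{k-1}\geq z_k^q$, i.e.\ $z_k\leq z_{k-1}^{1/q}=z_{k-1}^{2p/3}$; the exponent $2p/3>1$ delivers the superlinear rate.

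For $1\leq p<3/2$ we have $q>1$ and I would split the trajectory at the threshold $z_k=1$. While $z_k\geq 1$, $z_k^q\geq z_k$ gives simultaneously $z_{k-1}\geq z_k^q$ (hence $z_k\leq z_{k-1}^{2p/3}$) and $z_{k-1}\geq 2z_k$; the latter halves $z_k$ at each step, so from $z_0\leq 2^{3/(3-2p)}$ at most $\lceil 3/(3-2p)\rceil$ halvings bring $z_k$ down to $1$, and one further iteration pushes it strictly below $1$, yielding the claimed $t=1+\lceil 3/(3-2p)\rceil$. Once $z_k<1$, I would pass to the sublinear regime: noting that $t\mapsto t^{1-q}$ is convex on $(0,\infty)$ because $1-q<0$, I would apply the tangent-line inequality at $z_k$ and combine it with $z_{k-1}-z_k\geq z_k^q$ to telescope $z_k^{1-q}-z_{k-1}^{1-q}\geq q-1=1/\beta$; with the base case $z_0^{1-q}\geq 1$ this produces the $\mathcal{O}(k^{-\beta})$ rate, and specializing $p=1$ recovers $\beta=2$.

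The main obstacle will be the sublinear phase: the telescoping estimate $z_k^{1-q}-z_{k-1}^{1-q}\geq 1/\beta$ only yields $z_k\leq (1+k/\beta)^{-\beta}$, so extracting the precise constant $1-2^{-1/\beta}$ in the theorem requires a sharper inductive argument that works directly with the full recursion $z_{k-1}\geq z_k(1+z_k^{1/\beta})$ rather than its linearization — essentially verifying the ansatz $z_k\leq 1/\big((1-2^{-1/\beta})k+1\big)^\beta$ by induction, where the choice of the coefficient $1-2^{-1/\beta}$ is forced by matching at the base case $z_0=1$. Everything else is a clean algebraic manipulation of the one-step recursion together with monotonicity.
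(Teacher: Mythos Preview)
Your derivation of the master recursion $z_{k-1}\ge z_k+z_k^{q}$ with $q=3/(2p)$, and your treatment of the cases $p=\tfrac32$, $p>\tfrac32$, and the large-$z$ phase of $1\le p<\tfrac32$, coincide with the paper's proof essentially line for line. The one place where your plan slips is the sublinear phase $z_0<1$ for $1\le p<\tfrac32$.

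You propose to use convexity of $t\mapsto t^{1-q}$ via the tangent line at $z_k$ to obtain $z_k^{1-q}-z_{k-1}^{1-q}\ge q-1=1/\beta$. But the tangent inequality points the wrong way: convexity gives
\[
z_{k-1}^{1-q}\;\ge\; z_k^{1-q}+(1-q)z_k^{-q}(z_{k-1}-z_k),
\]
which, after rearranging and inserting $z_{k-1}-z_k\ge z_k^q$ (remember $1-q<0$), yields only the \emph{upper} bound $z_k^{1-q}-z_{k-1}^{1-q}\le (q-1)z_k^{-q}(z_{k-1}-z_k)$; it does not deliver the lower bound $\ge 1/\beta$ you claim. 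Incidentally, $1/\beta>1-2^{-1/\beta}$ for every $\beta>0$, so had your inequality been true it would be \emph{stronger} than the constant in the statement, not weaker as you suggest.

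The paper uses the opposite convexity inequality --- a secant bound. Rewriting the recursion as $(1/z_{k-1})^{1/\beta}\le (1/z_k)^{1/\beta}\bigl(1+z_k^{1/\beta}\bigr)^{-1/\beta}$ and using that $r(s)=(1+s)^{-1/\beta}$ is strictly convex on $[0,1]$, one has the secant estimate $r(s)\le 1-(1-2^{-1/\beta})s$ for $s\in(0,1)$, and substituting $s=z_k^{1/\beta}$ gives
\[
(1/z_k)^{1/\beta}-(1/z_{k-1})^{1/\beta}\;\ge\;1-2^{-1/\beta}.
\]
This telescopes directly to the stated bound, with the exact constant $1-2^{-1/\beta}$ appearing for free and no induction on an ansatz required. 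Your induction fallback would also close the gap, but the secant trick is the clean one-line replacement for the failed tangent step.
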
 
We remark that the rate in Theorem \ref{theorem:Manifold-complexity} yields  $\|\grad f(x_k)\|\leq \mathcal{O}(k^{-\frac{2}{3}})$. Combined with the gradient-dominant condition of degree $p$, this results in a sub-linear convergence rate of $\mathcal{O}(k^{-\frac{2p}{3}})$, which is slower than the $\mathcal{O}(k^{-\frac{2p}{3-2p}})$ rate here with $1\leq p<\frac{3}{2}$.
\begin{proof}
	Recall \eqref{thm:Gen-convgce-4} and \eqref{thm:Gen-convgce-6} state that
	$$f(x_k)-f(x_{k+1})\geq \frac{\tau_1}{4} \|\xi_{k}\|^3\mbox{ and }\|\grad f(x_{k+1})\|\leq \tau_2\|\xi_{k}\|^2.$$
	These relationships indicate
	$$f(x_k)-f(x_{k+1})\geq \frac{\tau_1}{4}\tau_2^{-\frac{3}{2}}\|\grad f(x_{k+1})\|^{\frac{3}{2}}.$$
	Together with the gradient-dominant condition \eqref{defn:grad-dominant-condition}, we get
	\begin{equation*}
	f(x_k) - f(x_{k+1}) \geq  \frac{\tau_1}{4\tau_2^{\frac{3}{2}}}(\tau_f)^{-\frac{3}{2p}}(f(x_{k+1})-f(\bar{x}))^{\frac{3}{2p}}.
	\end{equation*}
	If we define $z_k$ according to the equation \eqref{theorem:Grad-Domnt-speedups-zk},  then the above inequality can be simplified to
	\begin{equation}
	\label{thm:Grad-Domnt-speedups-1}
	z_k \geq z_{k+1} + z_{k+1}^\frac{3}{2p}.
	\end{equation}
	Next let us discuss various values of $p$. 
	
	First, when $p = \frac{3}{2}$, \eqref{thm:Grad-Domnt-speedups-1} becomes
	$z_k\geq 2z_{k+1}$
	and the result follows.
	
	Second, consider $1\leq p<\frac{3}{2}$. Suppose $z_k\geq 1$. If $z_{k+1}<1$, this stage is over; otherwise, $z_{k+1}\geq 1$, and since $\frac{3}{2p}>1$ we have $z_{k+1}^{^\frac{3}{2p}}\geq z_{k+1}$. Then \eqref{thm:Grad-Domnt-speedups-1} implies that
	$z_k\geq \max\{2z_{k+1}, z_{k+1}^\frac{3}{2p}\}.$
	In case $z_{k+1}\geq 2^{\frac{3}{3-2p}}$, we have $2z_{k+1}\leq z_{k+1}^\frac{3}{2p}$ and so the inequality $z_k\geq  z_{k+1}^\frac{3}{2p}$ dominates. Consequently, in that scenario after $t$ steps, we have $ \ln z_t \leq \left(\frac{2p}{3}\right)^t \ln z_0.$
	
	Whenever $z_{k_0}\leq 2^{\frac{3}{3-2p}}$  for some $k_0$, then $z_k\geq  2z_{k+1}$ starts to dominate henceforth.
	Therefore, $z_{k_0+t}$ is guaranteed to be less than 1 when $t \geq 1+\lceil\frac{3}{3-2p}\rceil$.
	
	Now, for simplicity suppose $z_0<1$. Letting $\beta = \frac{2p}{3-2p}$, \eqref{thm:Grad-Domnt-speedups-1} leads to
	\begin{equation}
	\label{thm:Grad-Domnt-speedups-2}
	\left(\frac{1}{z_k}\right)^\frac{1}{\beta}\leq \left(\frac{1}{z_{k+1}}\right)^{\frac{1}{\beta}}\left( 1+z_{k+1}^{\frac{1}{\beta}}\right)^{-\frac{1}{\beta}}.
	\end{equation}
	Note that function $r(s):=(1+s)^{-1/\beta}$ is strictly convex in $s$. Hence for any $0<s<1$, we have
	$r(s)<r(0)+\frac{r(1)-r(0)}{1-0}s = 1-(1-2^{-1/\beta})s.$
	Substituting this inequality into \eqref{thm:Grad-Domnt-speedups-2} with $s = z_{k+1}^{1/\beta}$ we obtain
	$$\left(\frac{1}{z_k}\right)^\frac{1}{\beta}\leq \left(\frac{1}{z_{k+1}}\right)^{\frac{1}{\beta}} - (1-2^{-\frac{1}{\beta}}),$$
	which further implies that
	\begin{equation}
	\label{thm:Grad-Domnt-speedups-3}
	z_k\leq \frac{1}{((1-2^{-1/\beta})k+z_0^{-1/\beta})^\beta} \leq \frac{1}{((1-2^{-1/\beta})k+1)^\beta} = \mathcal{O}(k^{-\beta}).
	\end{equation}
	This completes our analysis for the case $p<\frac{3}{2}$.
	
	For the case $p>\frac{3}{2},$ when $z_0 \geq 1$, \eqref{thm:Grad-Domnt-speedups-1} immediately leads to
	$$\frac{z_{k+1}}{z_k}\leq (1+z_{k+1}^{\frac{3-2p}{2p}})^{-1}\leq \left(1+z_{0}^{\frac{3-2p}{2p}}\right)^{-1}<1,$$
	which is a linear rate of convergence to the interval $(0,1)$. Whenever $z_0<1$, then \eqref{thm:Grad-Domnt-speedups-1} yields
	$$z_{k+1}\leq z_k^{\frac{2p}{3}}$$
	where $\frac{2p}{3}>1$. The theorem is proven.
\end{proof}

\subsubsection{Nondegenerate Riemannian Hessian} $\mbox{ }$ A$\mbox{ }$ second condition under  which a faster local convergence holds is when a local minimum point has positive definite Riemannian Hessian. Under this condition, it maintains a local quadratic rate of convergence, which is typical for the Newton type methods in the usual Euclidean case. It is interesting to note that this property carries over to Riemannian optimization as well.

Formally, let us call a second-order stationary point $\bar{x}$ to be {\it non-degenerate}\/ if there exists a constant $\delta>0$ such that for any $\xi\in\cT_{\bar{x}}\cM$, we have
\begin{equation}
\label{defn:non-degenerate-Hess}
\langle \Hess f(\bar{x})[\xi],\xi\rangle \geq \delta\|\xi\|^2.
\end{equation}
\begin{theorem}[Local Quadratic Convergence]
	Let the sequence $\{x_k,\xi_{k}\}$ be generated by Algorithm \ref{algo:Cubic}. Suppose $x_k\rightarrow\bar{x}$ where $\bar{x}$ is a nondegenerate local minimum satisfying \eqref{defn:non-degenerate-Hess}. 
	Suppose that $x_0$ satisfies \eqref{defn:non-degenerate-Hess} with some constant $\delta_0>0$  and $\|\xi_0\|\leq\min\left\{\frac{3d_{\cM}}{5L_1},\frac{3\delta_0}{10DL_1},\frac{\delta_0/4}{\tau_2}\right\}$, where $D, d_{\cM}$ are defined in Lemma \ref{lemma:Lip-Rie-Hess}, and $L_1$ is defined in Proposition \ref{prop:retraction-regularity}.
	Then, a quadratic rate of convergence holds: $$\frac{\tau_2}{\delta_0/2}\|\xi_{k+1}\| \leq \left(\frac{\tau_2}{\delta_0/2}\|\xi_{k}\|\right)^2,$$
	where $\tau_2$ is defined in Theorem \ref{theorem:Manifold-complexity}. As a result,
	$$
	\|\grad f(x_{k})\|\leq
	\mathcal{O}(2^{-2^k}), \mbox{ and }
	\langle \Hess f(x_k)[\xi],\xi\rangle \geq \frac{\delta_0}{2}\|\xi\|^2, \,\, \forall \xi\in\cT_{x_k}\cM.
	$$
\end{theorem}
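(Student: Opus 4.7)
The plan is to prove the theorem by a joint induction on $k$ maintaining two invariants simultaneously: (i) $\lambda_{\min}^\cM(\Hess f(x_k))\geq \delta_0/2$, and (ii) $r_k:=\tfrac{2\tau_2}{\delta_0}\|\xi_k\|\leq 1/2$. Once both invariants hold, the two consequences of the theorem follow quickly: the quadratic contraction from the optimality system of the cubic subproblem, and the $\mathcal{O}(2^{-2^k})$ bound on the gradient norm from iterating $r_{k+1}\leq r_k^2$ together with the pullback-gradient estimate already derived inside the proof of Theorem \ref{theorem:Manifold-complexity}.

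First, I would extract the quadratic contraction. At iteration $k+1$, the optimality condition \eqref{thm:convgce-1} yields $-g_{k+1} = (B_{k+1}+\tfrac{\sigma}{2}\|\xi_{k+1}\|I)\xi_{k+1}$. Combined with invariant (i) at step $k+1$, namely $B_{k+1}\succeq \tfrac{\delta_0}{2}I$ on $\cT_{x_{k+1}}\cM$, this gives $\|g_{k+1}\|\geq \tfrac{\delta_0}{2}\|\xi_{k+1}\|$. On the other hand, the pullback-gradient bound \eqref{thm:Gen-convgce-6} of Theorem \ref{theorem:Manifold-complexity} provides $\|g_{k+1}\|\leq \tau_2\|\xi_k\|^2$. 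Dividing gives $\|\xi_{k+1}\|\leq \tfrac{2\tau_2}{\delta_0}\|\xi_k\|^2$, equivalently $r_{k+1}\leq r_k^2$, which is exactly the advertised recursion.

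Second, I would propagate the Hessian invariant (i). Proposition \ref{prop:retraction-regularity} gives $\|x_{k+1}-x_k\|\leq L_1\|\xi_k\|\leq L_1\|\xi_0\|\leq 3d_\cM/5<d_\cM$, so Lemma \ref{lemma:Lip-Rie-Hess} applies at every step and yields $\lambda_{\min}^\cM(\Hess f(x_{k+1}))\geq \lambda_{\min}^\cM(\Hess f(x_k))-DL_1\|\xi_k\|$. Telescoping from $k=0$, $\lambda_{\min}^\cM(\Hess f(x_{k+1}))\geq \delta_0 - DL_1\sum_{j=0}^{k}\|\xi_j\|$. With invariant (ii) in force, $r_j\leq 2^{-2^j}$, hence $\|\xi_j\|\leq \tfrac{\delta_0}{2\tau_2}\cdot 2^{-2^j}$; the tail $\sum_{j\geq 0}2^{-2^j}<1$ and a one-line elementary bound show $\sum_j \|\xi_j\|\leq 2\|\xi_0\|$. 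The hypothesis $\|\xi_0\|\leq 3\delta_0/(10DL_1)$ then gives $DL_1\sum_j\|\xi_j\|\leq \delta_0/2$, preserving invariant (i).

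The base case $k=0$ is given by the hypotheses: $\lambda_{\min}^\cM(\Hess f(x_0))\geq \delta_0$, and $\|\xi_0\|\leq (\delta_0/4)/\tau_2$ translates to $r_0\leq 1/2$. The stated consequences then follow immediately: iterating the quadratic recursion gives $r_k\leq 2^{-2^k}$, hence $\|\xi_k\|\leq \tfrac{\delta_0}{2\tau_2}\cdot 2^{-2^k}$; combined with $\|\grad f(x_k)\|\leq \tau_2\|\xi_{k-1}\|^2$ one obtains $\|\grad f(x_k)\|=\mathcal{O}(2^{-2^k})$; and the Hessian bound on $\cT_{x_k}\cM$ is exactly invariant (i). The main obstacle I expect is precisely the threading of the two invariants without circularity: Hessian non-degeneracy is what powers quadratic contraction, while quadratic contraction is what keeps $\sum\|\xi_j\|$ small enough to prevent Hessian degeneration. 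The numerical margins $3/5$, $3/10$, and $1/4$ in the hypothesis on $\|\xi_0\|$ appear tuned precisely to close this loop with a positive $\delta_0/2$ safety budget, and confirming this quantitatively is the piece that requires care.
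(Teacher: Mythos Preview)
Your approach matches the paper's: a joint induction maintaining (i) $\lambda_{\min}^\cM(\Hess f(x_k))\geq \delta_0/2$ and (ii) $r_k:=\tfrac{2\tau_2}{\delta_0}\|\xi_k\|\leq 1/2$, with the quadratic contraction coming from the optimality system \eqref{thm:convgce-1} combined with the bound $\|g_{k+1}\|\leq \tau_2\|\xi_k\|^2$ from \eqref{thm:Gen-convgce-6}, and the Hessian invariant propagated by summing $\sum_j\|\xi_j\|$ and invoking Lemma~\ref{lemma:Lip-Rie-Hess}.

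There is one quantitative slip, precisely at the spot you flagged as requiring care. The bound $\sum_j\|\xi_j\|\leq 2\|\xi_0\|$ is true but not tight enough: plugging it in gives $DL_1\cdot 2\cdot \tfrac{3\delta_0}{10DL_1}=\tfrac{3\delta_0}{5}>\tfrac{\delta_0}{2}$, so invariant (i) would not be preserved. You need the sharper constant $\tfrac{5}{3}$. The paper gets it by using $r_0^{2^j}\leq r_0^{2j}$ for $j\geq 1$ (valid since $r_0<1$) and then summing the resulting geometric series: for $r_0\leq \tfrac{1}{2}$,
\[
\sum_{j\geq 0} r_0^{2^j}\ \leq\ r_0 + \sum_{j\geq 1} r_0^{2j}\ =\ r_0 + \frac{r_0^2}{1-r_0^2}\ \leq\ r_0 + \tfrac{2}{3}r_0\ =\ \tfrac{5}{3}r_0.
\]
With this, $DL_1\cdot \tfrac{5}{3}\cdot \tfrac{3\delta_0}{10DL_1}=\tfrac{\delta_0}{2}$ on the nose, and likewise $L_1\cdot \tfrac{5}{3}\cdot \tfrac{3 d_\cM}{5L_1}=d_\cM$, which is why the constants $\tfrac{3}{10}$ and $\tfrac{3}{5}$ appear in the hypothesis. (A minor stylistic difference: the paper applies Lemma~\ref{lemma:Lip-Rie-Hess} once between $x_0$ and $x_{k_0}$, verifying $\|x_{k_0}-x_0\|\leq d_\cM$ from the same sum, rather than telescoping; the arithmetic is identical.)
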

\begin{proof}
	Since $\bar{x}$ satisfies the non-degeneracy condition \eqref{defn:non-degenerate-Hess} and $\Hess f$ is Lipschitz continuous in a neighbourhood of $\bar{x}$, there exists a neighbourhood $U_{\bar{x}}\subset\cM$ of $\bar{x}$ and a positive constant $0<\delta\leq \delta_0$, such that for any $x\in U_{\bar{x}}$, \eqref{defn:non-degenerate-Hess} is satisfied for this $\delta$. Since $\{x_k\}$ converges to $\bar{x}$ and $\|\xi_{k}\|$ converges to 0, the condition regarding $\xi_{0}$ will be satisfied for some $\xi_{k}$. 
	One may redefine this $x_k$ to be $x_0$, and the condition on $\xi_0$ is then satisfied.
	
	Let us proceed to the proof of the theorem.
	Now the assumption implies that
	$\frac{2\tau_2}{\delta_0}\|\xi_{0}\|\leq\half<1$ and $\langle \Hess f(x_0)[\xi],\xi\rangle \geq \delta_0\|\xi\|^2\geq \half\delta_0\|\xi\|^2,  \,\, \forall \xi\in\cT_{x_0}\cM$. We claim that (will prove this claim in one moment) for any nonnegative integer $k$, 
	\begin{equation}
	\label{thm:Quadratic-1}
	\langle \Hess f(x_k)[\xi],\xi\rangle \geq  \half\delta_0\|\xi\|^2, \,\,\forall \xi\in\cT_{x_k}\cM.
	\end{equation}
	
	Notice that \eqref{thm:convgce-1} and \eqref{thm:Gen-convgce-6} give $(\Hess f(x_k)+\frac{\sigma}{2}\|\xi_{k}\|I)[\xi_{k}] = g_k$ and $\|g_k\|\leq \tau_2\|\xi_{k-1}\|^2$. Therefore,
	$$\|\xi_{k}\|\leq \frac{\|g_k\|}{\frac{\delta_0}{2}+\frac{\sigma}{2}\|\xi_{k}\|}\leq \frac{\tau_2\|\xi_{k-1}\|^2}{\frac{\delta_0}{2}+\frac{\sigma}{2}\|\xi_{k}\|}\leq \frac{\tau_2\|\xi_{k-1}\|^2}{ \delta_0/2 }.$$
	Defining $z_k := \frac{2\tau_2}{\delta_0}\|\xi_{k}\|$, the above inequality is equivalent to 	$z_{k}\leq z_{k-1}^2.$
	Since $z_0\leq\half$, the whole sequence converge quadratically to 0. Specifically, this leads to 
	$$\|\xi_{k}\| = \frac{\delta_0}{2\tau_2}z_k\leq \frac{\delta_0}{2\tau_2}z_0^{2^k}\leq \frac{\delta_0}{2\tau_2}\cdot \frac{1}{2^{2^k}}. $$ 
	By \eqref{thm:Gen-convgce-6}, we have
	$\|\grad f(x_k)\|\leq \tau_2\|\xi_{k-1}\|^2 = \mathcal{O}(2^{-2^k})$, as required.
	Now it remains only to show \eqref{thm:Quadratic-1}.	
	We shall prove by induction. The base case holds trivially. Suppose we already have
	$$\langle \Hess f(x_k)[\xi],\xi\rangle \geq  \half\delta_0\|\xi\|^2, \,\, \forall \xi\in\cT_{x_k}\cM, \mbox{ for } k\leq k_0-1,$$
	which means that
	$\|\xi_{k}\|\leq \frac{\delta_0}{2\tau_2}z_0^{2^k}\leq \frac{\delta_0}{2\tau_2}(z_0)^{2k}, \mbox{ for } k\leq k_0-1.$
	Therefore,
	\begin{small}
		\begin{eqnarray*}
			\|x_{k_0}-x_0\| & \leq & \sum_{k=1}^{k_0} \|x_k-x_{k-1}\|\leq  L_1\sum_{k=0}^{k_0-1} \|\xi_{k}\| \leq  \frac{L_1\delta_0}{2\tau_2}\left(z_0+\sum_{k=1}^{k_0-1} (z_0)^{2k}\right)  \leq  \frac{L_1\delta_0}{2\tau_2}\left(z_0+\frac{z_0^2}{1-z_0^2}\right).
		\end{eqnarray*}
	\end{small}Since $z_0\leq \half$, we have $\frac{z_0^2}{1-z_0^2}\leq \frac{\half z_0}{1-(\half)^2} = \frac{2}{3}z_0$, consequently
	$\|x_{k_0}-x_0\|\leq L_1\frac{\delta_0}{2\tau_2}\cdot\frac{5}{3}z_0.$
	Suppose $\|\xi_0\|$ satisfies the condition of the theorem, $\|x_{k_0}-x_0\|\leq d_{\cM}$. Then by Lemma \ref{lemma:Lip-Rie-Hess},
	$$\langle \Hess f(x_{k_0})[\xi],\xi\rangle \geq (\delta_0-D\|x_{k_0}-x_0\|)\|\xi\|^2\geq \half\delta_0\|\xi\|^2, \,\, \forall \xi\in\cT_{x_{k_0}}\cM,$$
	completing the proof.
\end{proof}

\section{The Case of Stiefel Manifolds} \label{sec:app}

When equipped with the standard Euclidean inner product, the so-called Stiefel manifold $\St_{n,r}$ is an $nr-\frac{r(r+1)}{2}$ dimensional Riemannian submanifold. Its tangent space is given by
$\cT_X\St_{n,r} = \{Z\in\R^{n\times r}:\: X^\top Z+Z^\top X = 0\}.$
The orthogonal projection onto the tangent space is given by
\begin{eqnarray}
\label{Stief:Proj}
P_X(G)
& = & G-\frac{1}{2}(XX^\top G+XG^\top X),~ \,\, \forall G\in\R^{n\times r}.
\end{eqnarray}
Consequently, the Riemannian gradient of $f$ at point $X\in \St_{n,r}$ equals $\grad f(X) = P_X(\nabla f(X)).$
Consider  the polar retraction defined as
\begin{equation}
\label{Stief:Retr-Polar}
\Retr(X,Z) = (X+Z)(I_r+Z^\top Z)^{-\frac{1}{2}}, \,\, \forall Z\in\cT_X\St_{n,r}.
\end{equation}
For $Z$ outside of $\tanst$, the extended polar retraction is defined by replacing $Z$ with $P_X(Z)$ in the above formula. It is worth noting that the polar retraction is a second-order retraction according to \cite{Retraction}. Moreover, for polar retraction, Proposition \ref{prop:retraction-regularity} is satisfied with  $L_1=1$ and $L_2 = \half$ according to \cite{Stief:SVRG-Jiang}.  For the algorithmic setup, the constants $L^R_H$ and $C_g$  are characterized in the following lemmas, whose proofs are lengthy and technical; they are in Appendices \ref{Appendix-C} and \ref{Appendix-D}. 

\begin{proposition}[Global Pullback Hessian Lipschitz Continuity]
	\label{lemma:Stief-pullback-Lips}
	If $\Retr(\cdot,\cdot)$ is chosen to be the polar retraction on the Stiefel manifold $\St_{n,r}$ (or a products of Stiefel manifolds), then the condition \eqref{pullback-Lips} holds with
	\begin{equation}
	\label{lemma:Stief-pullback-Lips-1}
	L_H = 13.66G  + 12.55\ell_f + 4\ell_H,
	\end{equation}
	which is independent of $x\in\cM$ as well as the dimensions $n,r$.
\end{proposition}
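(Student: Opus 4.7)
The plan is to use the chain rule to expand $\nabla^2_\xi\hat{f}_X$ in terms of $\nabla f$, $\nabla^2 f$, and derivatives of the polar-retraction map $Y(X,W) = (X+W)(I+W^\top W)^{-1/2}$, and then bound the resulting Lipschitz difference piece by piece. Since the extended retraction precomposes with the linear projection $P_X$ and we test on $\nu,\eta\in\cT_X\St_{n,r}$ only, the chain rule gives
\bee
\big\langle\nabla_\xi^2\hat{f}_X(Z)[\nu],\nu\big\rangle = \nabla^2 f(Y(Z))[\J Y(Z)\nu,\J Y(Z)\nu] + \big\langle\nabla f(Y(Z)),\nabla^2 Y(Z)[\nu,\nu]\big\rangle,
\eee
with $\J Y(0)=I$ on $\cT_X\St_{n,r}$, $Y(0)=X$, and, crucially, $Y(Z)\in\St_{n,r}$ for every $Z$ so that $\|\nabla f(Y(Z))\|_F\le G$ and $\|Y(Z)\|_2=1$ uniformly.

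Next I would insert-and-subtract to split $\big|\big\langle(\nabla_\xi^2\hat{f}_X(\eta)-\nabla_\xi^2\hat{f}_X(0))[\nu],\nu\big\rangle\big|$, for $\|\nu\|=1$, into four standard pieces. Piece (i) is $\ell_H\|Y(\eta)-X\|\,\|\J Y(\eta)\|_{\mathrm{op}}^{\,2}$, coming from the $\ell_H$-Lipschitz continuity of $\nabla^2 f$. Piece (ii) is $\ell_f\,\|\J Y(\eta)-I\|_{\mathrm{op}}\big(\|\J Y(\eta)\|_{\mathrm{op}}+1\big)$, coming from the bilinearity of $\nabla^2 f(X)$ together with $\J Y(0)=I$. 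Piece (iii) is $\ell_f\|Y(\eta)-X\|\,\|\nabla^2 Y(\eta)\|_{\mathrm{op}}$, from $\ell_f$-Lipschitzness of $\nabla f$ paired with the Hessian of $Y$. Piece (iv) is $G\,\|\nabla^2 Y(\eta)-\nabla^2 Y(0)\|_{\mathrm{op}}$, from boundedness of $\nabla f(X)$ paired with the Lipschitz increment of $\nabla^2 Y$. The problem is thereby reduced to uniform, dimension-free estimates of $\|Y(\eta)-X\|$, $\|\J Y(\eta)\|$, $\|\J Y(\eta)-I\|$, $\|\nabla^2 Y(\eta)\|$, and the Lipschitz modulus of $\nabla^2 Y$, each as a function of $\|\eta\|_F$ alone.

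I would obtain these by direct matrix differentiation of $Y=(X+W)M$ with $M=(I+W^\top W)^{-1/2}$. Writing $A=I+W^\top W$, the identity $MAM=I$ gives a Sylvester equation $MM'+M'M=-M^2 A'M^2$ for the differential of $M$; in the simultaneous eigenbasis $A=UD^2U^\top$ with $d_i\ge 1$ (since $A\succeq I$), its entries read $\tilde M'_{ij}=-\tilde A'_{ij}/\big(d_id_j(d_i+d_j)\big)$. Because the denominators are uniformly at least $2$ and grow polynomially in the $d_i$, the operator norms of $M',M'',M'''$ admit dimension-free bounds that in fact decay as $\|W\|_2\to\infty$. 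Iterating the differentiation on $MAM=I$ yields explicit bounds on $\|Y^{(k)}\|$ for $k=1,2,3$ depending only on $\|W\|$ and universal constants, while the low-order piece $\|Y(\eta)-X\|\le L_1\|\eta\|$ with $L_1=1$ is already supplied by Proposition \ref{prop:retraction-regularity}.

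The hard part is the numerical bookkeeping needed to read off the specific constants $13.66$, $12.55$, and $4$. One must combine the four pieces above using only spectral-norm inequalities, in order to avoid any $\sqrt{n}$ or $\sqrt{r}$ factors, repeatedly exploiting $\|X\|_2=1$, $\|Y\|_2=1$, and the entry-wise formulas for $M^{(k)}$. Piece (i) contributes the $4\ell_H\|\eta\|$ term; pieces (ii) and (iii) together produce the $12.55\,\ell_f\|\eta\|$ term; piece (iv), which involves the Lipschitz modulus of $\nabla^2 Y$ (equivalently a bound on the third derivative of $Y$), produces the $13.66\,G\|\eta\|$ term. The global version of the Lipschitz property, valid for every $\eta$ without any radius $R$, holds precisely because the decay of $\|M^{(k)}(W)\|_{\mathrm{op}}$ as $\|W\|\to\infty$ keeps all derivatives of $Y$ uniformly bounded across the whole tangent bundle.
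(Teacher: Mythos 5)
Your strategy coincides with the paper's own proof (Appendix C): differentiate $t\mapsto f(\Retr(X,Z+tV))$, insert-and-subtract to split the Hessian increment into an $\ell_H$-piece, $\ell_f$-pieces and a $G$-piece, and control the derivatives of the polar factor $(I+W^\top W)^{-\frac12}$ through Sylvester equations solved in the eigenbasis of $I+W^\top W$ (the paper writes the same equations in Kronecker/vec form after an SVD of $Z$), with denominators bounded below by $2$ and decaying in $\|W\|$, which is exactly what makes the bounds dimension-free and global (no radius $R$). So the route is the right one and essentially identical to the paper's.

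Two concrete gaps remain, and they are precisely where the content of the proposition lives. First, the specific constants are asserted, not derived: the paper's Steps 2--4 establish $\|Y''_{X,Z,V}(0)-Y''_{X,0,V}(0)\|_F\le 13.66\|Z\|_F$, $\|Y'_{X,Z,V}(0)-Y'_{X,0,V}(0)\|_F\le 2.31\|Z\|_F$, $\|Y'_{X,Z,V}(0)\|_F\le 4$ and $\|Y'_{X,0,V}(0)\|_F,\|Y''_{X,0,V}(0)\|_F\le 1$, and these spectral estimates (including scalar bounds such as $1-(1+w^2)^{-3/2}\le 0.66\,w$) are the entire source of $13.66$, $12.55$ and $4$; your proposal defers all of this. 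Second, your splitting pairs both Jacobian factors at $\eta$, e.g.\ the Hessian-Lipschitz piece is $\ell_H\|Y(\eta)-X\|\,\|\J Y(\eta)\|^2$; with the available uniform bound $\|\J Y(\eta)\|\le 4$ this yields a coefficient $16\ell_H$, not $4\ell_H$, and similarly the $\ell_f$ attribution drifts. To land on the stated constants you must do the asymmetric insert-and-subtract the paper uses, pairing one derivative evaluated at $\eta$ (bounded by $4$) with one evaluated at $0$ (bounded by $1$), so that the $\ell_H$ coefficient is $4\cdot 1$ and the $\ell_f$ terms combine to $1+(4+1)\cdot 2.31=12.55$. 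With that adjustment and the completed bookkeeping, your outline becomes the paper's proof.
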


Though we only considered the local Lipschitz continuity of the pullback Hessian, we actually have a stronger global Lipschitz continuity property on the Stiefel manifold.

\begin{proposition}
	\label{lemma:Stief-pullback-grad-Neibourhood}
	For the polar retraction on the Stiefel manifold $\St_{n,r}$ (or a product of Stiefel manifolds), Corollary \ref{corollary:pullback-grad-Neighbourhood} holds with constant
	$ C_g =\frac{1}{8.62}$.
\end{proposition}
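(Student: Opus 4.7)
The plan is to reduce the claim to a single spectral-norm estimate and then carry it out for the polar retraction. By the proof of Proposition~\ref{prop:pullback-grad-Neighbourhood}, the constant $2$ in Corollary~\ref{corollary:pullback-grad-Neighbourhood} is obtained as soon as
$$\|\Proj_{\cT_Y\St_{n,r}} - \J_Z\Retr(X,Z)^\top\|_2 \leq \tfrac12, \quad Y = \Retr(X,Z),$$
holds for every $Z\in\cT_X\St_{n,r}$ with $\|Z\|_F \leq 1/8.62$, uniformly in $X$. Granting this estimate on a single factor, the extension to a Cartesian product of Stiefel manifolds is immediate, since tangent spaces, orthogonal projections, and polar retractions all decompose blockwise, so the spectral norm of the blockwise difference equals the maximum over the factors.

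For the polar retraction, write $Y = (X+Z)S$ with $S := (I_r + Z^\top Z)^{-1/2}$. I would first derive a closed-form expression for $\J_Z\Retr(X,Z)[H]$ via the product rule applied to $(X+Z)S$, obtaining the derivative of $S$ from the identity $S^{-2}=I_r+Z^\top Z$, which produces a Sylvester equation for $\J_Z S[H]$. Transposing yields an explicit formula for $\J_Z\Retr(X,Z)^\top$; the projection $\Proj_{\cT_Y\St_{n,r}}$ is obtained by substituting $Y$ into~\eqref{Stief:Proj}. At $Z=0$, both operators coincide with $\Proj_{\cT_X\St_{n,r}}$ by Proposition~\ref{prop:Jacobi-Retration}, which is the cancellation that drives the estimate in the small-$Z$ regime.

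The bulk of the work is then perturbation bookkeeping. Using $\|X\|_2=1$, $\|Z\|_2\leq\|Z\|_F$, $\|S\|_2\leq 1$, the elementary bound $\|I_r-S\|_2\leq \tfrac12\|Z\|_F^2$ (from $(1+\lambda)^{-1/2}\geq 1-\lambda/2$ applied eigenvalue-wise to $Z^\top Z$), and $\|Y-X\|_F\leq L_1\|Z\|_F=\|Z\|_F$ from Proposition~\ref{prop:retraction-regularity}, one expands the difference $\Proj_{\cT_Y\St_{n,r}} - \J_Z\Retr(X,Z)^\top$ into a fixed number of summands, each controlled by submultiplicativity. The resulting scalar upper bound takes the form $a\|Z\|_F + b\|Z\|_F^2 + c\|Z\|_F^3$, and setting this polynomial equal to $\tfrac12$ yields the explicit threshold $\|Z\|_F \leq 1/8.62$.

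The main obstacle is that the Fréchet derivative of the inverse square root does not factor pleasantly, so a naive term-by-term triangle inequality would yield a constant substantially worse than $1/8.62$. Sharpness requires carefully grouping the Jacobian terms with matching pieces of $\Proj_Y$ so that exact cancellation occurs at first order in $Z$, leaving a residual that is genuinely quadratic in $\|Z\|_F$ wherever possible. This grouping, together with the fact that $X^\top Z$ is antisymmetric for tangent $Z$ (which kills certain cross terms), is what keeps the dominant linear coefficient small enough for the numerical threshold $1/8.62$ to come out, as opposed to a generically achievable but weaker value.
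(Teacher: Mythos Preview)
Your reduction to the spectral-norm inequality $\|P_Y-\J_Z\Retr(X,Z)\|_2\le\tfrac12$ is the right starting point and matches the paper. However, the execution you sketch is considerably more involved than what is actually needed, and your diagnosis of the ``main obstacle'' is off.

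The paper does \emph{not} compute $\J_Z\Retr(X,Z)$ in closed form and then hunt for first-order cancellations against $P_Y$. Instead it inserts $Y'_{X,0,V}(0)=P_X[V]$ as a midpoint and uses a plain triangle inequality:
\[
\|P_Y[V]-Y'_{X,Z,V}(0)\|_F \;\le\; \underbrace{\|Y'_{X,Z,V}(0)-Y'_{X,0,V}(0)\|_F}_{\text{already done}} \;+\; \underbrace{\|P_Y[V]-P_X[V]\|_F}_{\text{easy}}.
\]
The first term is exactly the quantity bounded in Step~3 of the proof of Proposition~\ref{lemma:Stief-pullback-Lips}, giving $2.31\|Z\|_F$. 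The second is controlled directly from the formula~\eqref{Stief:Proj} for $P_X$ by four applications of $\|AB\|_F\le\|A\|_2\|B\|_F$, yielding $2\|Y-X\|_F\le 2\|Z\|_F$ via $L_1=1$. The total is the purely linear bound $4.31\|Z\|_F$, and $4.31\times\tfrac{1}{8.62}=\tfrac12$ on the nose.

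So your claim that ``a naive term-by-term triangle inequality would yield a constant substantially worse than $1/8.62$'' and that one must engineer quadratic residuals is incorrect: the paper's argument is precisely a naive triangle inequality, and the constant $1/8.62$ is just $1/(2\times 4.31)$. No grouping for higher-order cancellation, no antisymmetry of $X^\top Z$, and no polynomial $a\|Z\|_F+b\|Z\|_F^2+c\|Z\|_F^3$ enters. The efficiency comes entirely from recycling Step~3 of Appendix~\ref{Appendix-C}, which you do not invoke.
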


\subsection{Numerical experiments}
As a numerical illustration, we consider the following problem
(cf.~\cite{APP:SDP_Gro_IEQ}):
\begin{eqnarray}
\label{prob:MOC-SDP}
\max  \,\,\, \langle A, UU^\top \rangle \,\, \mbox{ subject to }   u_i^\top u_i = I_{d\times d},~~ i = 1,...,n,
\end{eqnarray}
where $A\in\R^{dn\times dn}$, $u_i\in\R^{k\times d}$, $U = \left[u_1,\cdots,u_n\right]^\top\in\R^{dn\times k}.$ When $d = 1$, $u_i^\top u_i = I_{d\times d} = 1,$ the problem is constrained on a product of $n$ spheres, and the problem is called the \emph{low-rank max-cut-SDP} problem. When $d\geq 2$, the problem is constrained on a product of $n$ Stiefel maninfolds, and the problem is called the \emph{low-rank max-orthogonal-cut-SDP} problem. In the experiments, we test our algorithm in the cases where $d = 1$ and  $d = 3$ respectively.

We sample the matrices $A\sim \mathrm{GOE}(300),$ where $A\sim\mathrm{GOE}(n)$ stands for a matrix with $A_{ij}\sim N(0,1/n)$ when $i\neq j$ and $A_{ii}\sim N(0,2/n)$ (see \cite{APP:SDP_Gro_IEQ}).
In the implementation of Algorithm \ref{algo:Cubic}, we follow \cite{Cubic:Carmon} in that the subproblems are solved approximately with a fixed stepsize gradient method. However, we change the stopping criterion of the subproblem to be $\|\nabla\hat{m}_{x_k,\sigma}(\xi_t)\|\leq c\|\mathrm{grad}f(x_k)\|$, where $c$ is some constant.
The performance of the algorithm is plotted in Figure 1 and Figure 2, where Algorithm \ref{algo:Cubic} (CRRN), the Riemannian trust-region (RTR) algorithm \cite{RM_glo} and the Riemannian gradient descent (RGD) algorithm are compared. For each of $d=1$ and $d=3$ problems, we randomly generate 3 initial solutions and show the performance of the three algorithms on these cases.  


\begin{figure} 
	\label{fig:Num-1}
	\caption{Experiments on low-rank Max-Cut-SDP ($d=1$) problem}
	\centering
	\begin{tabular}{rrr}
		\begin{minipage}[t]{0.31\textwidth}
			\includegraphics[width=1\linewidth]{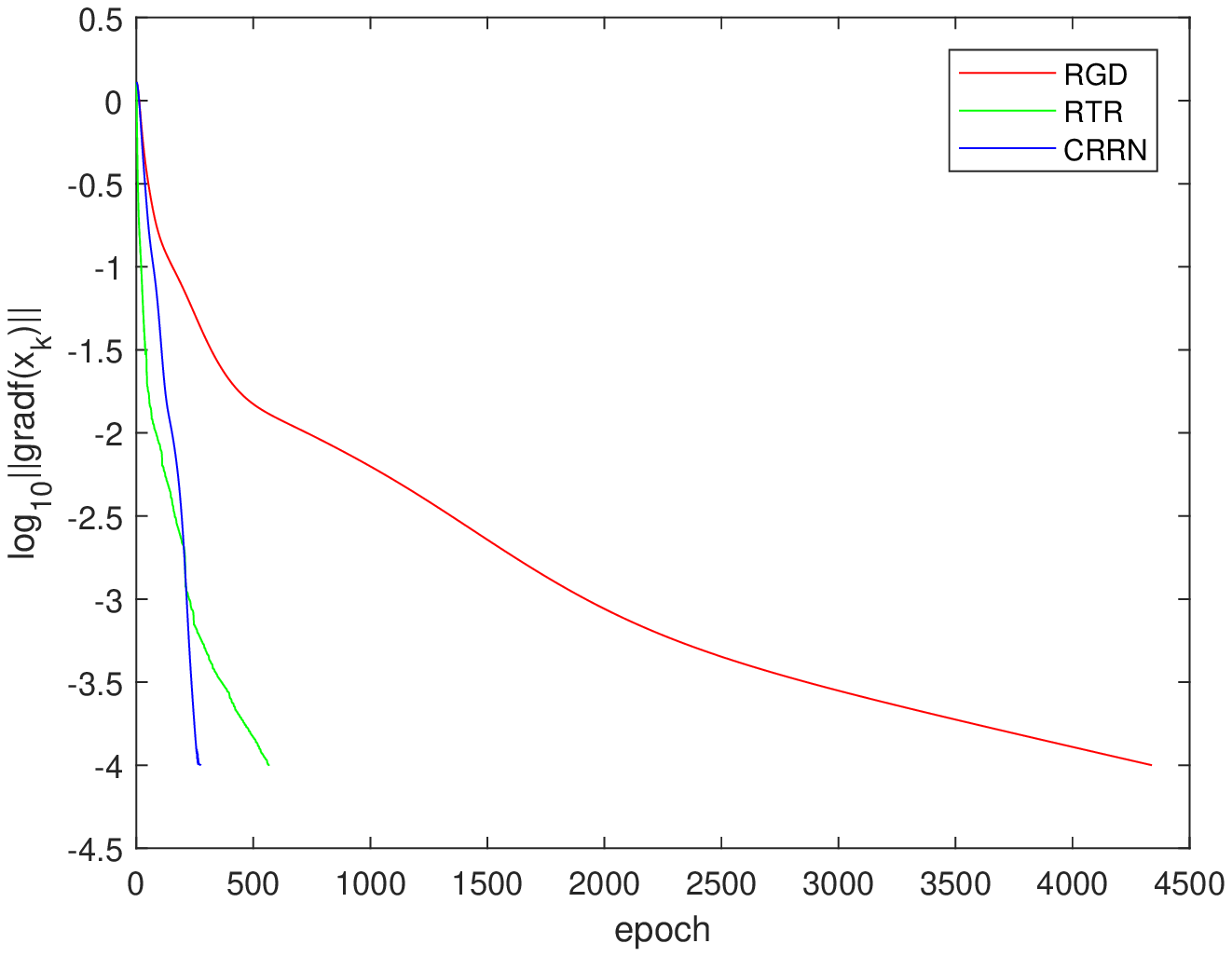}
		\end{minipage}
		\begin{minipage}[t]{0.31\textwidth}
			\includegraphics[width=1\linewidth]{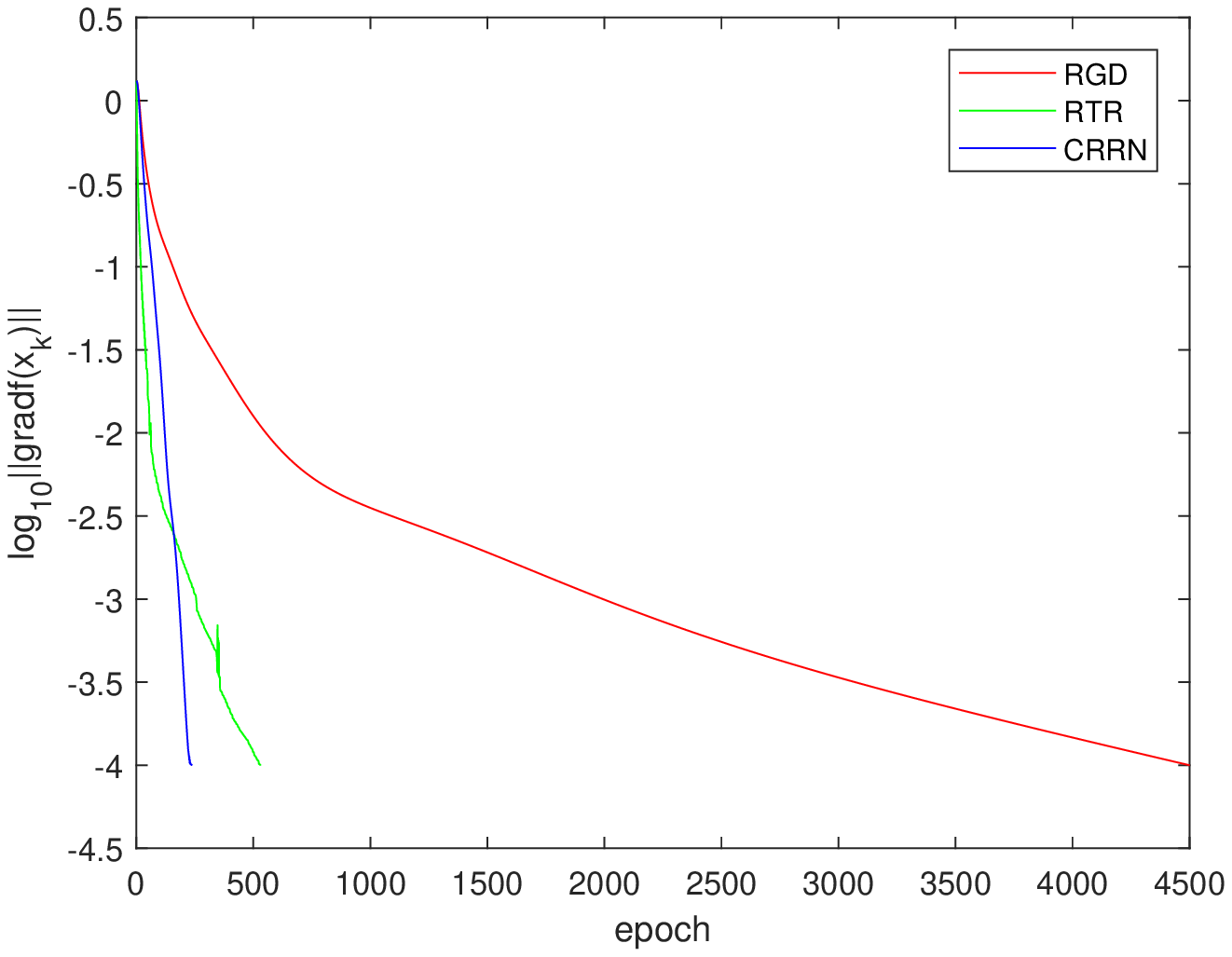}
		\end{minipage}
		
		\begin{minipage}[t]{0.31\textwidth}
			\includegraphics[width=1\linewidth]{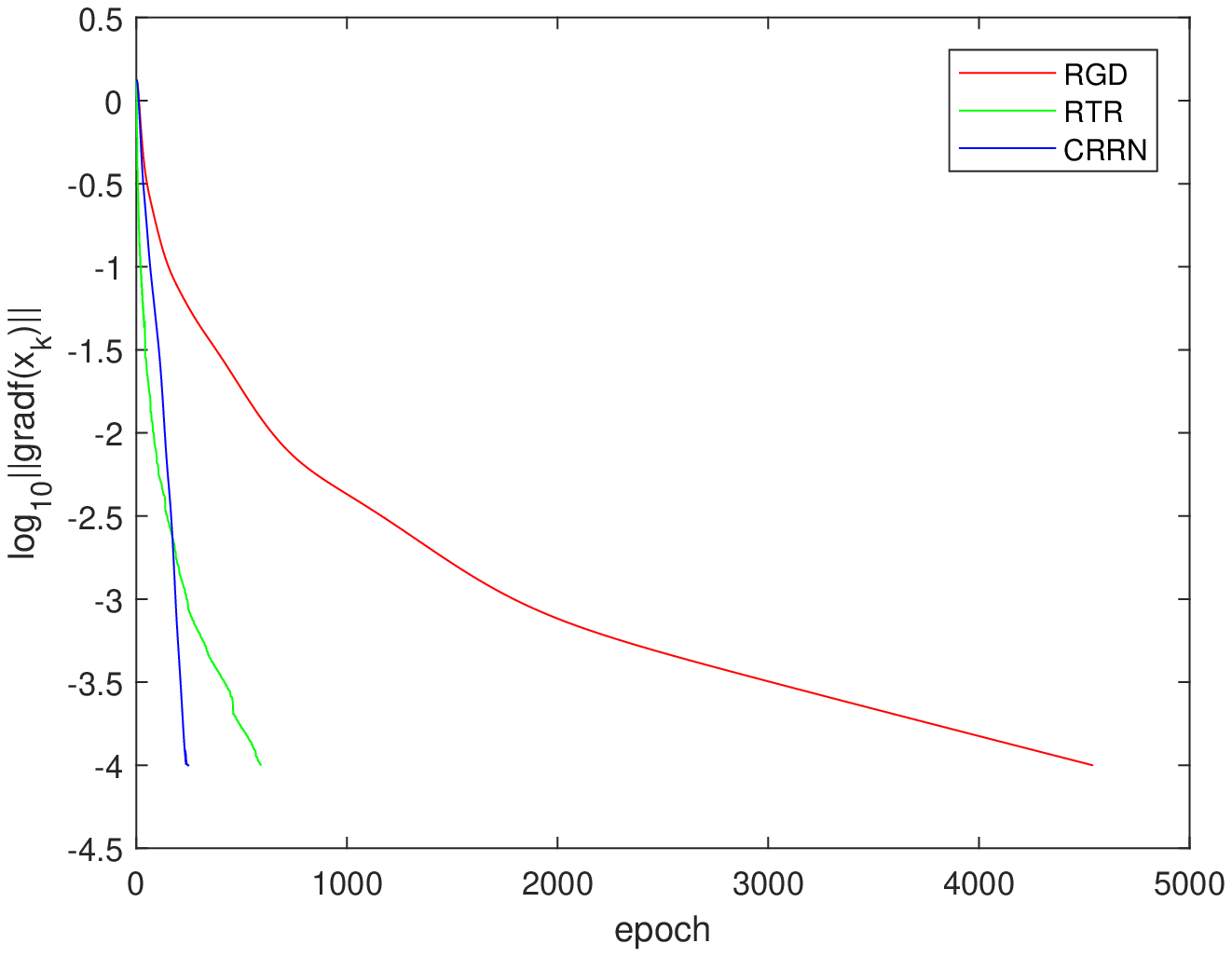}
		\end{minipage}
	\end{tabular}
\end{figure}

\begin{figure} 
	\label{fig:Num-2}
	\caption{Experiments on low-rank Max-Orthogonal-Cut-SDP ($d=3$) problem}
	\centering
	\begin{tabular}{rrr}
		\begin{minipage}[t]{0.31\textwidth}
			\includegraphics[width=1\linewidth]{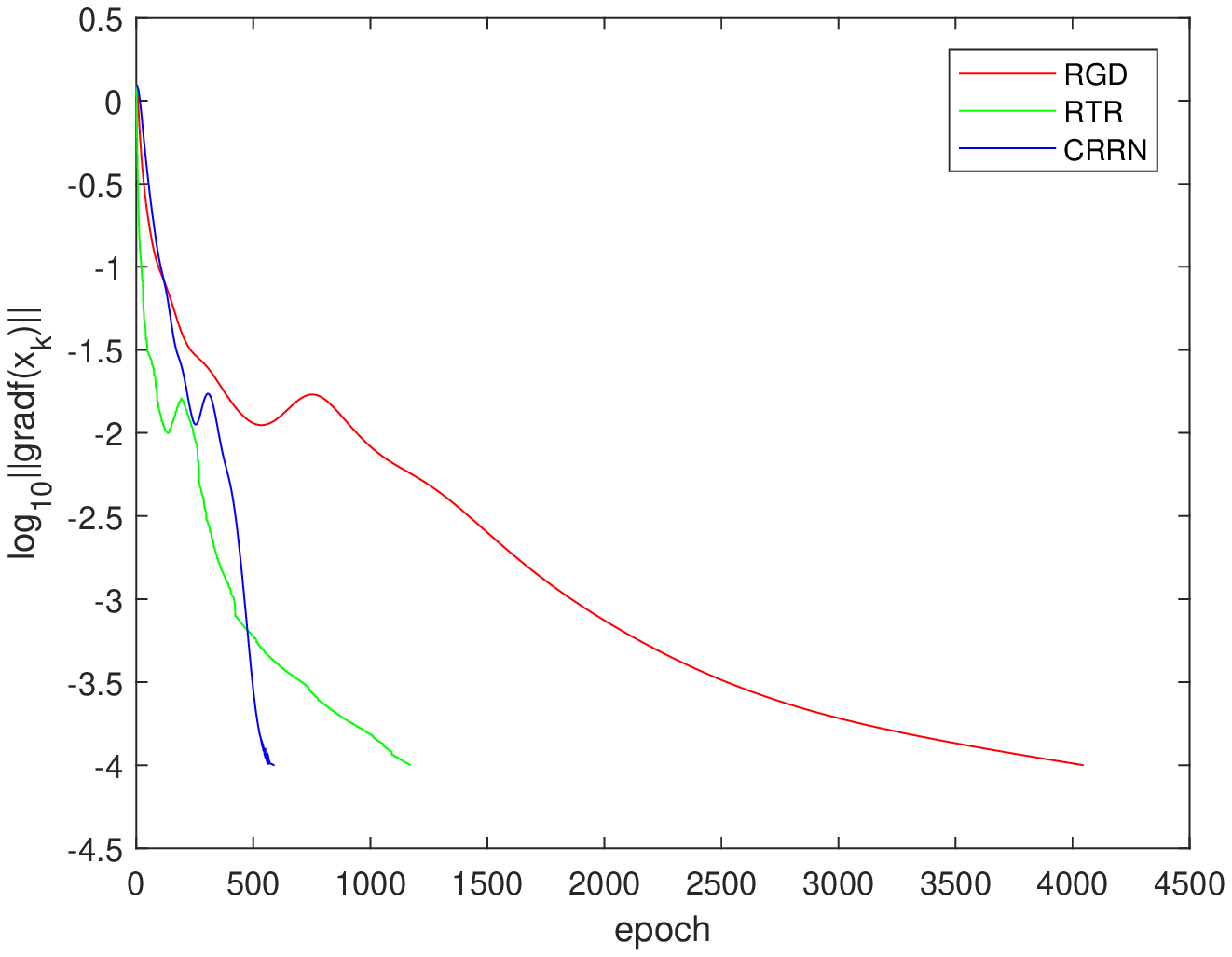}
		\end{minipage}
		\begin{minipage}[t]{0.31\textwidth}
			\includegraphics[width=1\linewidth]{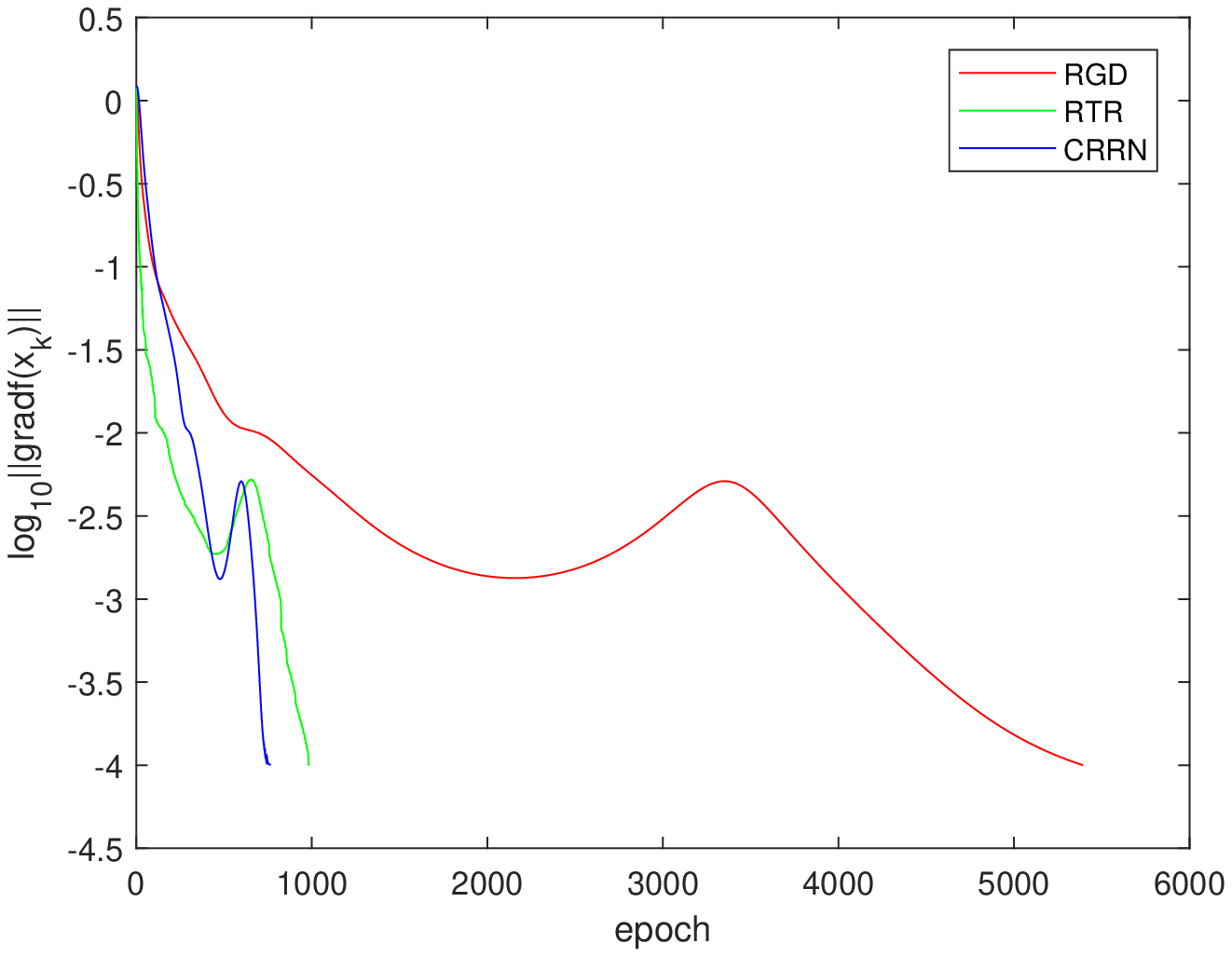}
		\end{minipage}
		
		\begin{minipage}[t]{0.31\textwidth}
			\includegraphics[width=1\linewidth]{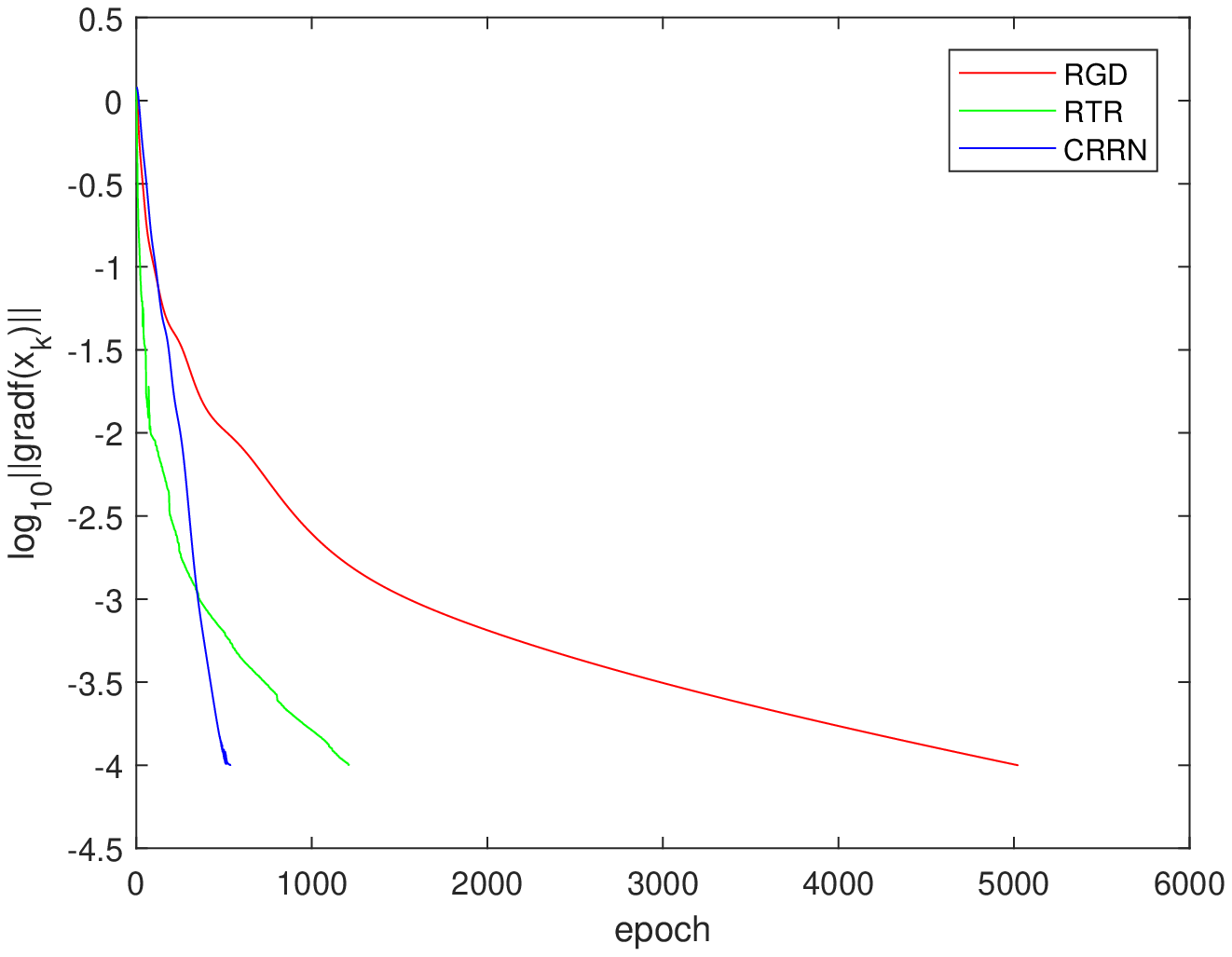}
		\end{minipage}
	\end{tabular}
\end{figure}

%

Finally, we show two examples where CRRN converges superlinearly to a local minimum point for the case $d=1$ in Figure 3. The data is from the Caltech students Facebook social network datasest. The network consists of 597 nodes. In these two cases, we set $k = 6$ and $k = 8$ respectively. 
\begin{figure} 
	\label{fig:Num-3}
	\caption{Superlinear convergence example on low-rank Max-Cut-SDP ($d=1$) problem}
	\centering
	\begin{tabular}{rr}
		\begin{minipage}[t]{0.45\textwidth}
			\includegraphics[width=1\linewidth]{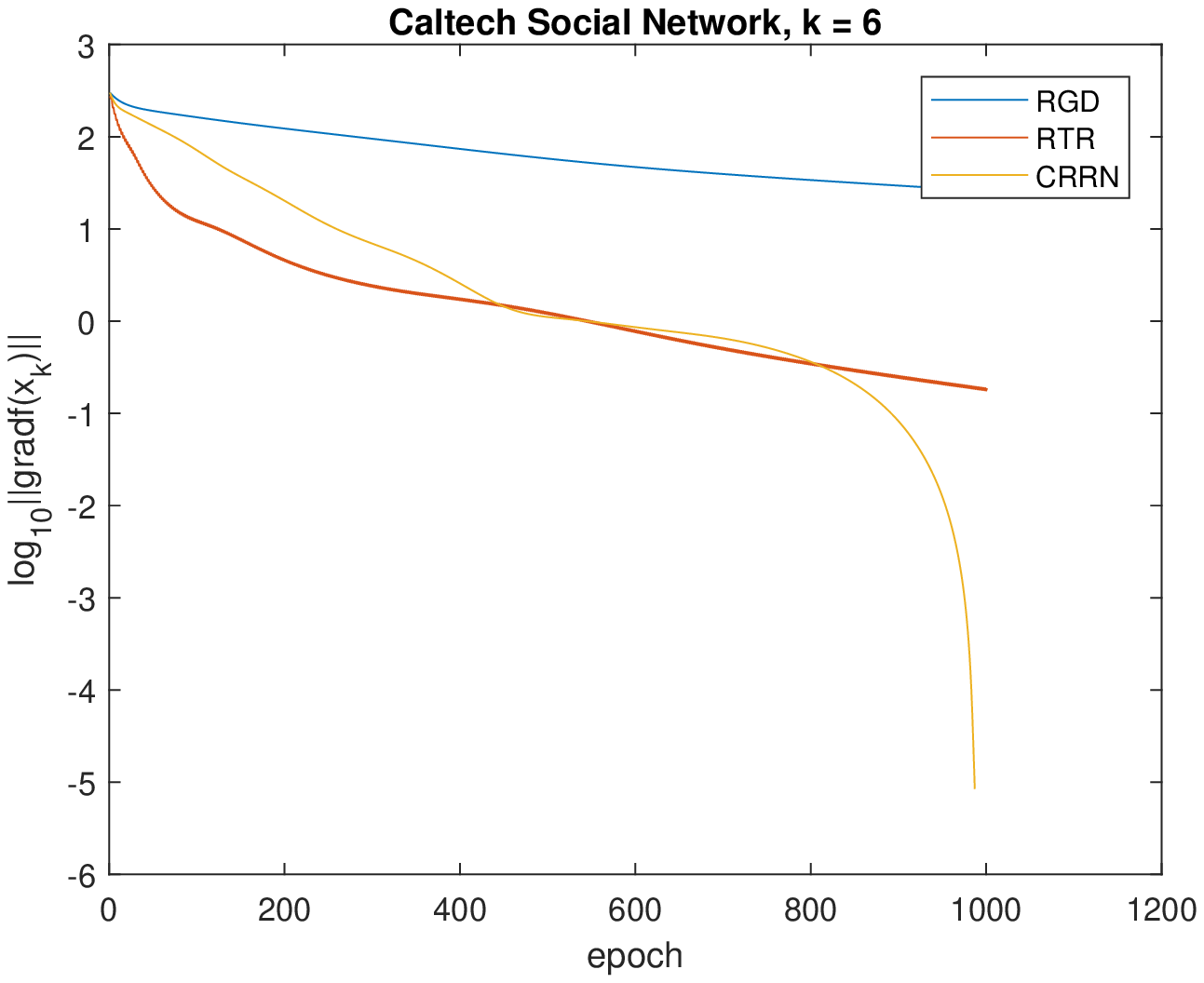}
		\end{minipage}
		\begin{minipage}[t]{0.45\textwidth}
			\includegraphics[width=1\linewidth]{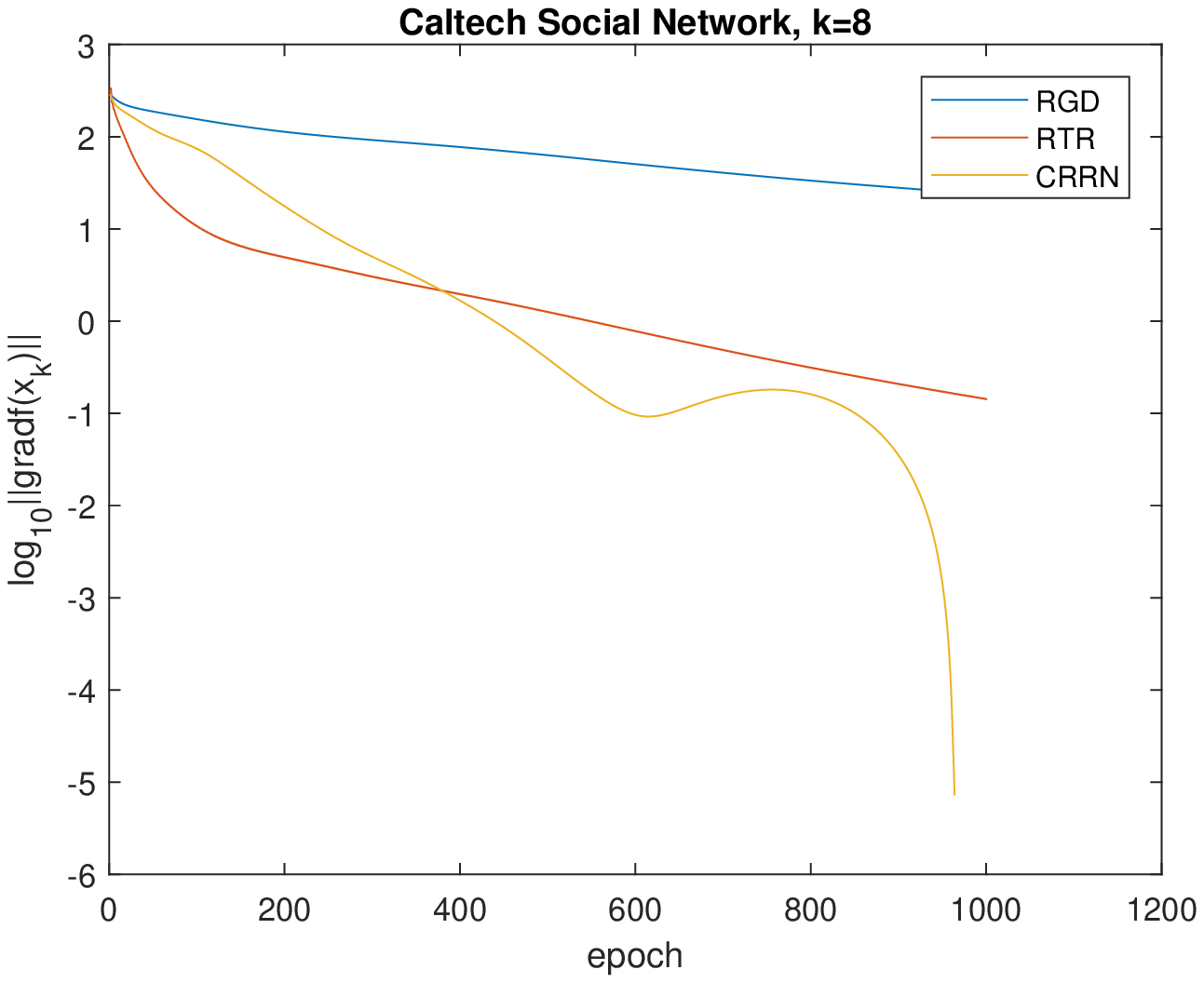}
		\end{minipage}
		
	\end{tabular}
\end{figure}

As is observed in \cite{APP:SDP_Gro_IEQ}, the gradient descent algorithm with fixed stepsize actually works very well for this problem and finally converges to a second-order stationary point. Similar observations can be made on the general behaviors of the second-order methods. In terms of Riemannian gradient size, the CRRN runs slight slower than the RGD and RTR at first, and then quickly catches up and takes over both algorithms as the Riemannian gradient gets smaller.

\section{Conclusion}\label{sec:conclusion}
In this paper, we extend Nesterov's cubic regularized Newton's method to Riemannian optimization. It is shown that under mild conditions on the objective function and the Riemannian manifold, an $\mathcal{O}(1/\epsilon^{\frac{3}{2}})$ complexity bound can be guaranteed.
This establishes that Riemannian optimization essentially bears the same degree of iteration complexity as the classical unconstrained optimization over Euclidean space.
Specifically, we consider optimization over the Stiefel manifold (or a product of Stiefel manifolds). In this case, all the constants established in the general setting can be exactly computed. Our numerical experiments show that our method is competitive compared against the Riemannian gradient descent and the Riemannian trust-region method in terms of iteration complexity, although we notice that solving the subproblems using Carmon's gradient descent method is indeed more expensive than solving the Riemannian trust-region subproblem.
This motivates the study of effective schemes to solve the subproblems.
To compute the required constants/parameters maybe a non-trivial task. However, once it is done, then it will be valid for the entire class of manifolds. In this paper, we carried out this computations for the case of Stiefel manifolds.
It will be interesting to design an adaptive and parameter-free computational scheme, which is a topic for the future research.

\bibliography{Cubic_Stiefel}
\bibliographystyle{siam}

\appendix

\section{Proof of Lemma \ref{lemma:Lip-Rie-Hess}} \label{appendix-B}
For ease of notation, we denote $B_x = P_x\circ \Hess f(x)\circ P_x$, where $P_x$ is the orthogonal projection onto the tangent space $\cT_x\cM$. Then $\lambda_{\min}^{\cM}(\Hess f(x))$ corresponds to smallest eigenvalue of $B_x$ among the eigenvalues whose eigenvectors lie within $\cT_x\cM,$ which we denote as $\lambda_{\min}^\cM(B_x)$ for simplicity. We now prove \eqref{lemma:Lip-Rie-Hess-1} by the following steps.

\textbf{[Step 1.]} {\em To show}: There exists a finite closed covering for $\cM$ such that $\cM\subset\cup_{i=1}^{n_0}U_i$. Each $U_i$ is a closed ball in $\cE$ and there exists a local equation characterization of $\cM\cap U_i$, i.e., $x\in\cM\cap U_i$ if and only if $\phi^{(i)}(x)=0, x\in U_i$.

{\em Proof}. First, for any $x\in\cM$, there exists an open ball within which a local equation characterization is valid. Hence the union of these balls provide an open covering for $\cM$. By the compactness of $\cM$, there exists a finite open covering $\cup_{i=1}^{n_0}\bar{U}_i\supset\cM$ with $\bar{U}_i = B(x_i,r_i)$, where $B(x,r)$ denotes an open ball centered at $x$ with radius $r$.
Define $S = \partial (\cup_{i=1}^{n_0}\bar{U}_i)$ be the boundary of $\cup_{i=1}^{n_0}\bar{U}_i$, then we have
$\epsilon := \inf_{z\in\cM,y\in S}\|z-y\| > 0.$
This is because both $\cM$ and $S$ are compact sets and $\|\cdot\|$ is continuous, the infimum is attained at some $z_0$ and $y_0$. However, since $\cup_{i=1}^{n_0}\bar{U}_i$ is an open covering for $\cM$, $z_0\neq y_0$, hence $\epsilon = \|z_0-y_0\|>0$. Therefore, if we let $U_i = \mbox{\rm cl}(B(x_i,r_i-\half\epsilon))$ where $\mbox{\rm cl}(\cdot)$ denotes the closure of a set, then we have that
$\cup_{i=1}^{n_0}U_i$ is a finite closed covering for $\cM$. Within each $U_i$, a local equation characterization $\phi^{(i)}$ exists as given in the description of
$\bar{U}_i$.

\textbf{[Step 2.]} {\em To show}: The inequality \eqref{lemma:Lip-Rie-Hess-1} holds within each $U_i$ with constant $D_i$.

{\em Proof}. With the local equation $\phi^{(i)}(x) = 0$, according to the results in \cite{RieOpt:Yang-etal-2012}, one can write
$B_x = P_x(\nabla^2 f(x) - \sum_{j=1}^{n-d}\mu_j(x)\nabla^2 \phi^{(i)}_j(x))P_x,$
where we have 
$P_x = I - \J\phi^{(i)}(x)^\top[\J\phi^{(i)}(x)\J\phi^{(i)}(x)^\top]^{-1}\J\phi^{(i)}(x),$ 
$\mu(x) = [\J\phi^{(i)}(x) \J\phi^{(i)}(x)^\top]^{-1}\J\phi^{(i)}(x)\nabla f(x).$
By the nonsingularity of $\J\phi^{(i)}(x)$ and compactness of $U_i\cap\cM$, the smoothness of $\phi^{(i)}$ indicates that $P_x$ and $\mu(x)$ are all smooth function in $x$, by the Lipschitz continuity of $\nabla f(x)$ and $\nabla^2f(x)$ and the compactness of $U_i$, we conclude that $B_x$ is Lipschitz continuous on $U_i$. Then there exists a $c_i$ such that
$$\|B_x-B_y\|_F \leq c_i\|x-y\|\mbox{ for } \forall x,y\in U_i\cap\cM.$$
Now we note the following well-known result on the continuity of 
the eigenvalues of symmetric matrices.
\begin{lemma}\label{lemma:perturb}(Corollary 6.3.8, in \cite{Book-Matrix-2012} on page 407)
	Let $A,E\in\mathbb{R}^{n\times n}$. Assume that $A, E$ are symmetric. Let $\lambda_1\leq\lambda_2\leq\cdots\leq\lambda_n$ be the eigenvalues of $A$, and let $\hat{\lambda}_1\leq \hat{\lambda}_2\leq\cdots\leq\hat{\lambda}_n$ be the eigenvalues of $A+E$. Then we have $\sum_{j = 1}^n\|\lambda_j-\hat{\lambda}_j\|^2\leq\|E\|_F^2.$
\end{lemma}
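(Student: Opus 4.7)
My plan is to establish Lemma \ref{lemma:perturb}, the Hoffman–Wielandt bound for eigenvalues of real symmetric matrices, by combining the orthogonal invariance of the Frobenius norm with Birkhoff's theorem and the rearrangement inequality.

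First, I would spectrally diagonalize: write $A = U\Lambda U^\T$ and $A+E = V\Gamma V^\T$ with $\Lambda = \mathrm{diag}(\lambda_1,\dots,\lambda_n)$ and $\Gamma = \mathrm{diag}(\hat\lambda_1,\dots,\hat\lambda_n)$ carrying the eigenvalues in non-decreasing order and $U, V$ orthogonal. Setting $W := V^\T U$ (again orthogonal), orthogonal invariance of $\|\cdot\|_F$ gives
$$\|E\|_F^2 \;=\; \|V\Gamma V^\T - U\Lambda U^\T\|_F^2 \;=\; \|\Gamma - W\Lambda W^\T\|_F^2.$$
Expanding, using the diagonality of $\Lambda,\Gamma$, produces
$$\|E\|_F^2 \;=\; \sum_i \hat\lambda_i^2 + \sum_j \lambda_j^2 - 2\sum_{i,j} W_{ij}^2\,\hat\lambda_i\lambda_j,$$
while the target quantity unpacks as
$$\sum_j (\lambda_j - \hat\lambda_j)^2 \;=\; \sum_i \hat\lambda_i^2 + \sum_j \lambda_j^2 - 2\sum_i \hat\lambda_i\lambda_i.$$
So the lemma reduces to the scalar inequality $\sum_{i,j} W_{ij}^2\,\hat\lambda_i\lambda_j \le \sum_i \hat\lambda_i\lambda_i$.

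Second, I would observe that $D_{ij} := W_{ij}^2$ is doubly stochastic, because the orthogonality of $W$ forces $\sum_i W_{ij}^2 = \sum_j W_{ij}^2 = 1$. By the Birkhoff–von Neumann theorem, $D$ admits a representation $D = \sum_k c_k P_k$ as a convex combination of permutation matrices $P_k$, each encoding a permutation $\sigma_k$. Hence
$$\sum_{i,j} W_{ij}^2\,\hat\lambda_i\lambda_j \;=\; \sum_k c_k \sum_i \hat\lambda_i\,\lambda_{\sigma_k(i)}.$$
Since both $\{\lambda_i\}$ and $\{\hat\lambda_i\}$ are sorted in the same order, the rearrangement inequality yields $\sum_i \hat\lambda_i\,\lambda_{\sigma(i)} \le \sum_i \hat\lambda_i\,\lambda_i$ for every permutation $\sigma$; averaging against the weights $c_k$ (which sum to $1$) preserves the inequality and closes the argument.

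The main obstacle is essentially bookkeeping: verifying the doubly stochastic property of the Hadamard square $W\odot W$, invoking Birkhoff cleanly, and aligning the two sortings so that rearrangement pushes in the correct direction. These are classical steps, and no deeper device (e.g.\ Lidskii's inequality or majorization machinery) is needed in the symmetric case.
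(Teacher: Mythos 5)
Your argument is correct: the reduction via orthogonal invariance to $\sum_{i,j}W_{ij}^2\hat\lambda_i\lambda_j\le\sum_i\hat\lambda_i\lambda_i$, the doubly stochastic structure of $W\odot W$, Birkhoff--von Neumann, and the rearrangement inequality together constitute the classical Hoffman--Wielandt proof for symmetric matrices. The paper does not prove this lemma at all---it simply quotes it as Corollary 6.3.8 of Horn and Johnson---and your proof is essentially the standard argument underlying that cited result, so nothing further is needed.
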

Note that the eigenvalues of $B_x$ are $n-d$ zeros and $d$ eigenvalues with corresponding eigenvectors lying within $\cT_x\cM$. Now consider the matrices $B_x$ and $B_y$ in the following three cases. First, when both $\lambda_{\min}^{\cM}(B_x), \lambda_{\min}^{\cM}(B_y)\leq 0$, we have $\lambda_{\min}^{\cM}(B_x) = \lambda_{1}(B_x)$, $\lambda_{\min}^{\cM}(B_y) = \lambda_{1}(B_y)$. Then applying  Lemma \ref{lemma:perturb} we have
$$|\lambda_{\min}^{\cM}(B_x)-\lambda_{\min}^{\cM}(B_y)|\leq \sqrt{\sum_{j = 1}^n(\lambda_j(B_x)-\lambda_j(B_y))^2}\leq \|B_x-B_y\|_F\leq c_i\|x-y\|.$$
Second, when both $\lambda_{\min}^{\cM}(B_x)\geq 0$ and $\lambda_{\min}^{\cM}(B_y)\geq 0,$ we have $\lambda_{\min}^{\cM}(B_x) = \lambda_{n-d+1}(B_x)$ and $\lambda_{\min}^{\cM}(B_y) = \lambda_{n-d+1}(B_y)$. Then the same argument of the first case goes through similarly.
Third, when $\lambda_{\min}^{\cM}(B_x)$ and $\lambda_{\min}^{\cM}(B_y)$ have different signs, e.g., $\lambda_{\min}^{\cM}(B_x)<0$ while $\lambda_{\min}^{\cM}(B_y)> 0$, we have the following arguments,
$$\lambda_{1}(B_x)=\lambda_{\min}^{\cM}(B_x)<0,~\lambda_{n-d+1}(B_x)\leq 0,$$ $$\lambda_{1}(B_y) = 0, ~\lambda_{n-d+1}(B_y)=\lambda_{\min}^{\cM}(B_y)>0.$$
Applying Lemma \ref{lemma:perturb} in a similar way as before we have
$$-c_1\|x-y\|\leq \lambda_{\min}^{\cM}(B_x)<0, \mbox{ and } 0<\lambda_{\min}^{\cM}(B_y)\leq c_1\|x-y\|+\lambda_{n-d+1}(B_x)\leq c_1\|x-y\|.$$
Hence $|\lambda_{\min}^{\cM}(B_x)-\lambda_{\min}^{\cM}(B_y)|\leq 2c_1\|x-y\|$. Therefore if we take $D_i= 2c_i$, then the statement of Step 2 is proved.

\textbf{[Step 3.]} {\em To show}: There exists a constant $d_{\cM}>0$ such that for $x,y\in\cM$, if $\|x-y\|\leq d_{\cM}$ then there exists a $U_i$ from the finite closed cover of $\cM$ such that $x,y\in \mbox{\rm int}(U_i)$, where $\mbox{\rm int}(\cdot)$ indicates the interior of a set.

{\em Proof}. First, by our construction of the $U_i$'s, we known that $\cup_{i=1}^{n_0}\mbox{\rm int}(U_i)$ is also a finite open cover of $\cM$. Suppose the statement is not true, then there exists a sequence $\{x_k,y_k\}\subset\cM$ such that the pair $x_k,y_k$ does not belong to the interior of a same $U_i$ for $\forall k,\forall i$, but $\|x_k,y_k\|\rightarrow 0$. By the compactness of $\cM$, we have convergent subsequence $\{x_{k_r},y_{k_r}\}$ such that both $x_{k_r}\rightarrow z_0, y_{k_r}\rightarrow z_0$. Then this indicates that $z_0\notin\cup_{i=1}^{n_0}\mbox{\rm int}(U_i),$
otherwise for sufficiently large $r$, $x_{k_r},y_{k_r}$ shall lie in some same $\mbox{\rm int}(U_i)$ and hence yields a contradiction. Therefore $z_0\notin\cM$, but this also contradicts the compactness of $\cM$ which infers that $z_0\in\cM.$

\textbf{[Step 4.]} Combining all the previous results and letting $D = \max_{1\leq i\leq n_0}D_i$, 
the conclusion of the theorem follows.
In the special case of Stiefel Manifolds, which can be characterized by 
a smooth and uniform equation $X^\top X=I$, the finite covering arguments in Step 1 is unnecessary; $\cM$ itself is a valid closed covering. In that case, $d_{\cM}$ can be set to $+\infty.$

\section{Proof of Proposition \ref{lemma:Stief-pullback-Lips}} \label{Appendix-C}
Here we consider the extended polar retraction on the Stiefel manifold $\St_{n,r}$, written as $\Retr(X,Z) = (X+Z_P)(I+Z_P^\top Z_P)^{-\half}$,
where $Z_P:=P_x[Z]$ is the orthogonal projection of $Z$ onto $\cT_X\St_{n,r}$. However, since $Z$ and $V$ are already in the tangent space $\cT_X\St_{n,r}$, we drop the subscript $P$ in the subsequent discussion. This extended version is applied to enable the usage of Euclidean calculus tools. For the ease of notation, we define
$Y_{X,Z,V}(t) = \Retr(X,Z+tV)$ and  $g_{X,Z,V}(t) = f( Y_{X,Z,V}(t)).$ Therefore,
$$\langle (\nabla_{\xi}^2\hat{f}(Z)-\nabla_{\xi}^2\hat{f}(0) )[V],V\rangle = g_{X,Z,V}''(0)-g_{X,0,V}''(0). $$
This suggests that to prove \eqref{pullback-Lips} it suffices to prove
\begin{equation}
\label{thm:pb-Lips-1}
|g_{X,Z,V}''(0)-g_{X,0,V}''(0)|\leq L_H\|Z\|_F, \,\, \forall V\in\cT_X\St_{n,r}, \|V\|_F=1,
\end{equation}
with an estimation of $L_H$. By direct computation,
$$g''_{X,Z,V}(0) = \langle \nabla f(Y_{X,Z,V}(0)),Y''_{X,Z,V}(0)\rangle + \langle \nabla^2f(Y_{X,Z,V}(0))[Y'_{X,Z,V}(0)], Y'_{X,Z,V}(0)\rangle.$$
Applying the triangular inequality yields
\begin{small}
	\begin{eqnarray*}
		|g_{X,Z,V}''(0)-g_{X,0,V}''(0)| & \leq & |\langle \nabla f(Y_{X,Z,V}(0)),Y''_{X,Z,V}(0)-Y''_{X,0,V}(0)\rangle| \\
		&& + |\langle \nabla f(Y_{X,Z,V}(0))-\nabla f(Y_{X,0,V}(0)),Y''_{X,0,V}(0)\rangle| \\
		& & + \|Y'_{X,Z,V}(0)\|_F\|\nabla^2f(Y_{X,Z,V}(0))\|_F\|Y'_{X,Z,V}(0)-Y'_{X,0,V}(0)\|_F  \\
		& &+ \|Y'_{X,Z,V}(0)\|_F\|\nabla^2f(Y_{X,Z,V}(0))-\nabla^2f(Y_{X,0,V}(0))\|_F\|Y'_{X,0,V}(0)\|_F \\
		& &+
		\|Y'_{X,Z,V}(0)-Y'_{X,0,V}(0)\|_F\|\nabla^2f(Y_{X,0,V}(0))\|_F\|Y'_{X,0,V}(0)\|_F.
\end{eqnarray*}\end{small}Recalling the nature of the parameters  $G,\ell_f,\ell_H$, and Proposition \ref{prop:retraction-regularity}, the above inequality can be simplified to
\begin{eqnarray}
\label{thm:pb-Lips-2}
|g_{X,Z,V}''(0)-g_{X,0,V}''(0)| & \leq &  G\|Y''_{X,Z,V}(0)-Y''_{X,0,V}(0)\|_F + \ell_fL_1\|Y''_{X,0,V}(0)\|_F\|Z\|_F \nonumber\\
& & + \ell_f(\|Y'_{X,Z,V}(0)\|_F+\|Y'_{X,0,V}(0)\|_F)\|Y'_{X,Z,V}(0)-Y'_{X,0,V}(0)\|_F  \\
& &+ \ell_HL_1\|Y'_{X,Z,V}(0)\|_F\|Y'_{X,0,V}(0)\|_F \|Z\|_F.\nonumber
\end{eqnarray}
Thus we need only to bound the following terms by
\begin{equation}
\label{thm:pb-1}
\|Y''_{X,Z,V}(0)-Y''_{X,0,V}(0)\|_F\mbox{ and } ~~\|Y'_{X,Z,V}(0)-Y'_{X,0,V}(0)\|_F\leq \mathcal{O}(\|Z\|_F),
\end{equation}
\begin{equation}
\label{thm:pb-2}
\|Y'_{X,0,V}(0)\|_F, \|Y''_{X,0,V}(0)\|_F \mbox{ and }\|Y'_{X,Z,V}(0)\|_F = \mathcal{O}(1).
\end{equation}
Now we bound these terms in the following steps.

\textbf{[Step 1.]} First we characterize the derivatives of $Y_{X,Z,V}(t)$. Define
$S_Z := I_r+Z^\top Z $ and define
\begin{equation*}
F_{Z,V}(t) := (I_r+(Z+tV)^\top(Z+tV))^{-\half} = (S_Z + t(Z^\top V+V^\top Z+tV^\top V))^{-\half}.
\end{equation*}
Then we have $Y_{X,Z,V}(t) = (X+Z+tV)F_{X,Z}(t)$ and $
F_{Z,V}^2(t) = (I_r+tS_Z^{-1}(V^\top Z+Z^\top V+tV^\top V))^{-1}S_Z^{-1}.$ Let the expansion of $F_{Z,V}(t)$ be
\begin{equation}
\label{thm:pb-Lips-3}
F_{Z,V}(t) = F_{Z,V}(0) + tC_{Z,V} + t^2D_{Z,V} + \mathcal{O}(t^3)
\end{equation}
and let us calculate the expansion of $F_{Z,V}^2(t)$ by
\begin{small}
	\begin{eqnarray}
	\label{thm:pb-Lips-4}
	F_{Z,V}^2(t) = [I_r - tS_Z^{-1}(V^\top Z+Z^\top V+tV^\top V)+(tS_Z^{-1}(V^\top Z+Z^\top V+tV^\top V))^2]S_Z^{-1}+\mathcal{O}(t^3)\nonumber
	\end{eqnarray}
\end{small}
where we use the formula $(I+A)^{-1} = I+\sum_{i=1}^\infty(-1)^iA^i$ when $\|A\|<1$. Then, by comparing the coefficients of $t$ and $t^2$ terms between the above expansion of $F_{Z,V}^2(t)$ and \eqref{thm:pb-Lips-3}, $F_{Z,V}(t)^2 = (F_{Z,V}(0) + tC_{Z,V} + t^2D_{Z,V})^2 + \mathcal{O}(t^3),$
we have
\begin{equation}
\label{thm:pb-Lips-5}
S_Z^{-\half}C_{Z,V} + C_{Z,V}S_Z^{-\half} = -S_Z^{-1}(V^\top Z+Z^\top V)S_Z^{-1},
\end{equation}
and
\begin{eqnarray}
\label{thm:pb-Lips-6}
&&S_Z^{-\half}D_{Z,V} + D_{Z,V}S_Z^{-\half}+C_{Z,V}^2 \\
&=& -S_Z^{-1}(V^\top V)S_Z^{-1} + S_Z^{-1}(V^\top Z+Z^\top V)S_Z^{-1}(V^\top Z+Z^\top V)S_Z^{-1}.\nonumber
\end{eqnarray}
With $C_{Z,V}$ and $D_{Z,V}$ in place, we can write the derivatives of $Y_{X,Z,V}(t)$ explicitly as
\begin{equation}
\label{thm:pb-Lips-7}
\begin{cases}
Y'_{X,Z,V}(0) = (X+Z)C_{Z,V} + VS_Z^{-\half},\\
Y''_{X,Z,V}(0) = 2VC_{Z,V} + 2(X+Z)D_{Z,V}.
\end{cases}
\end{equation}
Note that when $Z = 0$ and $S_Z = I_r$, we can solve \eqref{thm:pb-Lips-5} and \eqref{thm:pb-Lips-6} to yield $S_0 = I, C_{0,V} = 0, D_{0,V} = -\half V^\top V$. Consequently,
\begin{equation}
\label{thm:pb-Lips-8}
Y'_{X,0,V}(0) = V\mbox{ and } Y''_{X,0,V}(0) = -XV^\top V.
\end{equation}

\textbf{[Step 2.]} Bound the term $\|Y''_{X,Z,V}(0)-Y''_{X,0,V}(0)\|_F\leq \mathcal{O}(\|Z\|_F)$ for $\forall V\in\cT_X\St_{n,r}, \|V\|_F \leq 1$.
By  \eqref{thm:pb-Lips-7} and \eqref{thm:pb-Lips-8}, we have
\begin{eqnarray}
\label{thm:pb-Lips-9}
\|Y''_{X,Z,V}(0)-Y''_{X,0,V}(0)\|_F & = & \|2VC_{Z,V}+2(X+Z)D_{Z,V}+XV^\top V\|_F\nonumber\\
&\leq& \underbrace{2\|C_{Z,V}\|_F}_{T_1} + \underbrace{\|2XD_{Z,V}+XV^\top V\|_F}_{T_2} + \underbrace{2\|D_{Z,V}\|_F\|Z\|_F}_{T_3} .
\end{eqnarray}
First, let us consider the term $T_1$. Since $S_Z\succeq I_r$, we have $\|C_{Z,V}\|_F\leq \|S_Z^{\half}C_{Z,V}\|_F$. Thus we choose to  bound $\|S_Z^{\half}C_{Z,V}\|_F$ which will be useful later. If we denote by $\vvec(X)$ the vectorization operator for a matrix $X$, then a handy formula gives $\vvec(AXB) = (B^\top\otimes A) \vvec(X)$. Note that by using the $\vvec(\cdot)$ operator, equation \eqref{thm:pb-Lips-5} has an explicit solution
\begin{eqnarray*}
	\vvec(C_{Z,V}) & = & (I_r\otimes S_Z^{-\half}+S_Z^{-\half}\otimes I_r)^{-1}\vvec(S_Z^{-1}(V^\top Z+Z^\top V)S_Z^{-1})\\
	& = & (I_r\otimes S_Z^{-\half}+S_Z^{-\half}\otimes I_r)^{-1}(S_Z^{-1}\otimes  S_Z^{-1})\vvec(V^\top Z+Z^\top V)\\
	& = &  (S_Z\otimes S_Z^{\half}+S_Z^{\half}\otimes S_Z)^{-1} \vvec(V^\top Z+Z^\top V).
\end{eqnarray*}
Therefore
\begin{eqnarray}
\label{thm:pb-Lips-9.2}
\vvec(S_Z^{\half}C_{Z,V}) & = &(I_r\otimes S_Z^{\half})\vvec(C_{Z,V})\\
& = &(I_r\otimes S_Z^{\half} + S_Z^{\half}\otimes S_Z^{\half})^{-1}\left[\vvec(V^\top Z)+\vvec(Z^\top V)\right].\nonumber
\end{eqnarray}
Since $S_Z\succeq I_r, S_Z^{\half}\succeq I_r$, we have $I_r\otimes S_Z^{\half} + S_Z^{\half}\otimes S_Z^{\half}  \succeq 2I_{r^2}$, and therefore
\begin{equation}
\label{thm:pb-Lips-9.5}
\|(I_r\otimes S_Z^{\half} + S_Z^{\half}\otimes S_Z^{\half})^{-1}\|_2\leq \half,
\end{equation} 
\begin{eqnarray}
\label{thm:pb-Lips-10}
\|S_Z^{\half}C_{Z,V}\|_F & = &\|\vvec(S_Z^{\half}C_{Z,V})\|_F \leq \half\|\vvec(V^\top Z+Z^\top V)\|_F\\& \leq &\half(\|V^\top Z\|_F+\|Z^\top V\|_F)\leq\|Z\|_F\nonumber
\end{eqnarray}
where the last inequality is due to $\|V\|_F\leq 1$. Hence
\begin{equation}
\label{thm:pb-Lips-11}
T_1\leq 2\|Z\|_F.
\end{equation}

Now for the benefit of discussion later, 
let us further bound
$\|S_Z^\half C_{Z,V}\|_F$ by a constant. Let the SVD of $Z$ be $Z = Q\Lambda^\half U^\top.$ Then $Z^\top Z = U\Lambda U^\top$, $S_Z = U(I_r+\Lambda)U^\top$ and $S_Z^\half = U(I_r+\Lambda)^\half U^\top$. Hence we have
$$(I_r\otimes S_Z + S_Z^\half\otimes S_Z^\half)^{-1} = (U\otimes U)(I_r\otimes(\Lambda+I_r)+(\Lambda+I_r)^\half\otimes(\Lambda+I_r)^\half)(U\otimes U)^\top.$$
Therefore, instead of bounding $\|(I_r\otimes S_Z + S_Z^\half\otimes S_Z^\half)^{-1}\vvec(Z^\top V)\|_F$ in \eqref{thm:pb-Lips-10} by means of \eqref{thm:pb-Lips-9.5},
we now take a different approach:
\begin{eqnarray*}
	\label{thm:pb-Lips-11.5}
	& &\|(I_r\otimes S_Z + S_Z^\half\otimes S_Z^\half)^{-1}\vvec(Z^\top V)\|_F \\
	& = & \|(I_r\otimes S_Z + S_Z^\half\otimes  S_Z^\half)^{-1}\vvec(U\Lambda^\half Q^\top VI_r)\|_F  \nonumber\\
	& = & \|(U\otimes U)(I_r\otimes(\Lambda+I_r)+(\Lambda+I_r)^\half\otimes(\Lambda+I_r)^\half)^{-1}(U^\top \otimes U^\top)(I_r\otimes U\Lambda^\half)\vvec(Q^\top V)\|_F\nonumber\\
	& \leq & \|U\otimes U\|_2\|(I_r\otimes(\Lambda+I_r)+(\Lambda+I_r)^\half\otimes(\Lambda+I_r)^\half)^{-1}(I_r\otimes\Lambda^\half)\|_2\|U^\top\otimes I_r\|_2\|Q\|_2\|V\|_F . \nonumber
\end{eqnarray*}
Note that $\|U\otimes U\|_2 = \|U^\top\otimes I_r\|_2 = \|Q\|_2 = 1,$ $\|V\|_F\leq 1$. If we denote $\lambda_i$ as the $i$th diagonal element of $\Lambda$, then all the eigenvalues of the diagonal matrix $(I_r\otimes(\Lambda+I_r)+(\Lambda+I_r)^\half\otimes(\Lambda+I_r)^\half)^{-1}(I_r\otimes\Lambda^\half)$ can be written as
$$\sigma_{ij} = \frac{\lambda_j^\half}{1+\lambda_j+(1+\lambda_i)^\half(1+\lambda_j)^\half}\leq \half.$$
Hence we end up with $\|(I_r\otimes S_Z + S_Z^\half\otimes S_Z^\half)^{-1}\vvec(Z^\top V)\|_F\leq \half.$ Similarly, we have  $\|(I_r\otimes S_Z + S_Z^\half\otimes S_Z^\half)^{-1}\vvec(V^\top Z)\|_F\leq \half.$ Together with \eqref{thm:pb-Lips-9.2}, they give an alternative bound which states $\|S_Z^\half C_{Z,V}\|_F\leq 1.$ In total,
\begin{equation}
\label{thm:pb-Lips-12}
\|C_{Z,V}\|_F\leq \|S_Z^\half C_{Z,V}\|_F\leq \min\{\|Z\|_F,1\}.
\end{equation}

Second, we now come to bound the term $T_2$. Reformulating \eqref{thm:pb-Lips-6} slightly, we have
$$D_{Z,V}S_Z^\half + S_Z^\half D_{Z,V} = -S_Z^\half C_{Z,V}^2S_Z^\half  - S_Z^{-\half}V^\top VS_Z^{-\half} + \left(S_Z^{-\half}(Z^\top V+V^\top Z)S_Z^{-\half}\right)^2.$$
Let us define $H_{Z,V}$ be the matrix that satisfies
\begin{equation}
\label{thm:pb-Lips-13}
H_{Z,V}S_Z^\half + S_Z^\half H_{Z,V} = S_Z^{-\half}V^\top VS_Z^{-\half},
\end{equation}
and define $J_{Z,V} = D_{Z,V}+H_{Z,V}$. Then,
\begin{eqnarray}
T_2 &=& \|2D_{Z,V}+V^\top V\|_F \\& \leq & 2\|D_{Z,V}+H_{Z,V}\|_F+\|2H_{Z,V}-V^\top V\|_F \nonumber  \\& = & 2\|J_{Z,V}\|_F+\|2H_{Z,V}-V^\top V\|_F. \label{thm:pb-Lips-13.5}\nonumber
\end{eqnarray}
Note that by the definition of $J_{Z,V}$ and $H_{Z,V}$,
$$J_{Z,V}S_Z^\half + S_Z^\half J_{Z,V} = -S_Z^\half C^2_{Z,V}S_Z^\half + \left(S_Z^{-\half}(Z^\top V+V^\top Z)S_Z^{-\half}\right)^2.$$
Similar to the bound for $T_1$, we have
\begin{eqnarray}
\|J_{Z,V}\|_F &=& \|\vvec(J_{Z,V})\|_F \nonumber \\
&\leq& \left\|\left(I_r\otimes S_Z^\half+ S_Z^\half\otimes I_r\right)^{-1}\right\|_2\left(\|S_Z^\half C_{Z,V}\|_F^2 + \|S_Z^{-\half}(Z^\top V+V^\top Z)S_Z^{-\half}\|_F^2\right). \label{thm:pb-Lips-14}
\end{eqnarray}
Note that
\begin{eqnarray*}
	\|S_Z^{-\half}(Z^\top V+V^\top Z)S_Z^{-\half}\|_F \leq 2\|S_Z^{-\half}V^\top ZS_Z^{-\half}\|_F\leq 2\|S_Z^{-\half}\|_2\|ZS_Z^{-\half}\|_2\|V\|_F\leq 2\|ZS_Z^{-\half}\|_2,
\end{eqnarray*}
where the last inequality is due to $\|S_Z^{-\half}\|_2,\|V\|_F\leq 1$. By the SVD of $Z$,
$$\|ZS_Z^{-\half}\|_2 = \|Q\Lambda^\half U^\top U(I_r+\Lambda)^{-\half}U^\top \|_2 = \| \Lambda^\half  (I_r+\Lambda)^{-\half} \|_2\leq 1,$$
and so
$$\|S_Z^{-\half}(Z^\top V+V^\top Z)S_Z^{-\half}\|_F\leq 2.$$
On the other hand,
$$\|S_Z^{-\half}(Z^\top V+V^\top Z)S_Z^{-\half}\|_F\leq2\|S_Z^{-\half}\|_2^2\|V\|_F\|Z\|_F\leq2\|Z\|_F.$$
Therefore,
\begin{equation}
\label{thm:pb-Lips-15}
\|S_Z^{-\half}(Z^\top V+V^\top Z)S_Z^{-\half}\|_F^2 \leq \min\{4,4\|Z\|_F\}.
\end{equation}
Similarly, \eqref{thm:pb-Lips-12} indicates that
$$\|S_Z^{ \half}C_{Z,V} \|_F^2\leq \min\{1,\|Z\|_F\}.$$
Together with \eqref{thm:pb-Lips-9.5}, the above bounds and \eqref{thm:pb-Lips-14}, we have
\begin{equation}
\label{thm:pb-Lips-16}
\|J_{Z,V}\|_F\leq \min\left\{\frac{5}{2},\frac{5}{2}\|Z\|_F\right\}.
\end{equation}

For  $\|2H_{Z,V}-V^\top V\|_F$, let us start with the explicit solution of equation \eqref{thm:pb-Lips-13}, which is
$$2\vvec(H_{Z,V}) = 2(S_Z^{ \half}\otimes S_Z + S_Z\otimes S_Z^{ \half})^{-1}\vvec(V^\top V)$$
leading to
\begin{equation}
\label{thm:pb-Lips-16.5}
\|H_{Z,V}\|_F\leq \half.
\end{equation}
and
\begin{eqnarray*}
	\|2\vvec(H_{Z,V})-\vvec(V^\top V)\|_F &\leq& \|I_{r^2} - 2(S_Z^{ \half}\otimes S_Z + S_Z\otimes S_Z^{ \half})^{-1}\|_2\|\vvec(V^\top V)\|_F \\
	& \leq & 1-\frac{2}{\lambda_{\max}(S_Z^{ \half}\otimes S_Z + S_Z\otimes S_Z^{ \half})}\\
	& \leq & 1-\frac{1}{\lambda_{\max}(  S_Z\otimes S_Z^{ \half})}\\
	& = & 1-\frac{1}{\lambda_{\max}^{\frac{3}{2}}(  S_Z )}\\
	& \leq &  1-\frac{1}{(1+\|Z\|_F^2)^{\frac{3}{2}}} .
\end{eqnarray*}
Let $w = \|Z\|_F$, and define the function $q(w) = 1-(1+w^2)^{-\frac{3}{2}}, w\geq0.$ Then it is easy to prove that
$q(w)\leq 0.66w$ for all $w\geq0$. Consequently,
\begin{equation}
\label{thm:pb-Lips-17}
\| H_{Z,V} - V^\top V \|_F = \|2\vvec(H_{Z,V})-\vvec(V^\top V)\|_F \leq 0.66\|Z\|_F.
\end{equation}
Therefore, combining \eqref{thm:pb-Lips-13.5}, \eqref{thm:pb-Lips-16} and \eqref{thm:pb-Lips-17} yields
\begin{equation}
\label{thm:pb-Lips-18}
T_2\leq 2\cdot\frac{5}{2}\|Z\|_F+0.66\|Z\|_F = 5.66\|Z\|_F.
\end{equation}

Third, we bound the term $T_3$ by
\begin{equation}
\label{thm:pb-Lips-19}
T_3  = 2\|D_{Z,V}\|_F\|Z\|_F
\leq  2(\|J_{Z,V}\|_F+\|H_{Z,V}\|_F) \leq 6\|Z\|_F,
\end{equation}
where the last inequality is due to \eqref{thm:pb-Lips-16} and \eqref{thm:pb-Lips-16.5}. Now combining the bounds on $T_1,T_2$ and $T_3$, we finally finish the Step 2 with
\begin{equation}
\label{thm:pb-Lips-20}
\|Y''_{X,Z,V}(0) - Y''_{X,0,V}(0)\|_F \leq T_1+T_2+T_3 \leq 13.66\|Z\|_F.
\end{equation}

\textbf{[Step 3.]} Bound the term  $\|Y'_{X,Z,V}(0) - Y'_{X,0,V}(0)\|_F$ by
\begin{eqnarray*}
	\|Y'_{X,Z,V}(0) - Y'_{X,0,V}(0)\|_F & = & \|(X+Z)C_{Z,V}+VS_Z^{-\half}-V\|_F \\
	& \leq &\|X\|_2\|C_{Z,V}\|_F+\|Z\|_F\|C_{Z,V}\|_F + \|I_r-S_Z^{-\half}\|_2\|V\|_F \\
	& \leq & \|Z\|_F + \|Z\|_F + (1-\lambda_{\max}^{-\half}(S_Z))\\
	& \leq & 2\|Z\|_F + (1-(1+\|Z\|_F^2)^{-\half})\\
	& \leq & 2.31\|Z\|_F.
\end{eqnarray*}
The second inequality is due to $\|X\|_2=1, \|V\|_F\leq 1, \|C_{Z,V}\|_F\leq\min\{1,\|Z\|_F\}$, the last inequality is due to $(1-(1+w^2)^{-\half})\leq 0.31w, \, \forall w\geq0.$

\textbf{[Step 4.]} Bound the terms $\|Y'_{X,Z,V}(0)\|_F, \|Y'_{X,0,V}(0)\|_F\mbox{ and } \|Y''_{X,0,V}(0)\|_F.$
By previous results, directly, we have
$$\|Y'_{X,0,V}(0)\|_F = \|V\|_F \leq 1,\quad \|Y''_{X,0,V}(0)\|_F = \|XV^\top V\|_F\leq 1,$$
and
\begin{eqnarray*}
	\|Y'_{X,Z,V}(0)\|_F & = & \|(X+Z)C_{Z,V}+VS_Z^{-\half}\|_F \leq 2+\|ZC_{Z,V}\|_F.
\end{eqnarray*}
Again, by the SVD of $Z$ we obtain
\begin{eqnarray*}
	\|\vvec(ZC_{Z,V}I)\|_F  & = &   \|(I_r\otimes Z)\vvec(C_{Z,V})\|_F \\ & \leq & \|(I_r\otimes Z)(S_Z^\half\otimes S_Z+S_Z\otimes S_Z^\half)^{-1}\vvec(V^\top Z+Z^\top V)\|_F.
\end{eqnarray*}
Note that
\begin{eqnarray*}
	& & \|(I_r\otimes Z)(S_Z^\half\otimes S_Z+S_Z\otimes S_Z^\half)^{-1}\vvec(Z^\top V)\|_F \\
	& = & \|(I_r\otimes Q\Lambda^\half U^\top)(U\otimes U)(\Lambda^\half\otimes\Lambda+\Lambda\otimes\Lambda^\half)^{-1}(U^\top\otimes U^\top)(I_r\otimes U\Lambda^\half Q^\top)\vvec(V)\|_F\\
	& \leq & \|(U\otimes Q)\|_2\|(I_r\otimes \Lambda^\half)(\Lambda^\half\otimes\Lambda+\Lambda\otimes\Lambda^\half)^{-1}(I_r\otimes  \Lambda^\half )\|_2\|(U^\top\otimes Q^\top)\|_2\\
	&\leq & \max_{i,j}\frac{\lambda_j}{1+\lambda_j+(1+\lambda_j)^\half(1+\lambda_i)^\half}\leq 1.
\end{eqnarray*}
Similarly, $\|(I_r\otimes Z)(S_Z^\half\otimes S_Z+S_Z\otimes S_Z^\half)^{-1}\vvec(V^\top Z)\|_F\leq 1$. Consequently
$$\|Y'_{X,Z,V}(0)\|_F\leq  2+\|\vvec(ZC_{Z,V}I)\|_F\leq 4.$$
Now with all the bounds for \eqref{thm:pb-1} and \eqref{thm:pb-2}, and $L_1=1$, we derive a bound for \eqref{thm:pb-Lips-2} as
\begin{eqnarray*}
	|g_{X,Z,V}''(0)-g_{X,0,V}''(0)| & \leq &  L_H \|Z\|_F,
\end{eqnarray*}
where $L_H = 13.66G  + 12.55\ell_f + 4\ell_H.$

\section{Proof of Proposition \ref{lemma:Stief-pullback-grad-Neibourhood}} \label{Appendix-D}
Due to Proposition \ref{prop:pullback-grad-Neighbourhood}, we only need to bound the difference $\|P_W-\J_{\xi}\Retr(X,Z)\|_2$, where $W = \Retr(X,Z)$. It suffices to show for any $V\in\R^{n\times r}$ and $\|V\|_F=1$ that
$$\|P_W[V]-\J_{\xi}\Retr(X,Z)[V]\|_F = \|P_W[V]-
Y'_{X,Z,V}(0)\|_F\leq \mathcal{O}(\|Z\|_F), \, \forall Z\in\cT_X\St_{n,r},$$
where $Y_{X,Z,V}(t)$ is used in the proof of Proposition \ref{lemma:Stief-pullback-Lips}.
By \eqref{thm:pb-Lips-7} we have
\begin{eqnarray}
\label{thm:puba-gd-1}
\begin{cases}
Y'_{X,0,V}(0) = V_P := P_X[V],\\
Y'_{X,Z,V}(0) = (X+Z)C_{Z,V_P} + V_PS_Z^{-\half}.
\end{cases}
\end{eqnarray}
Note that in this case $V$ is not required to lie within $\cT_X\St_{n,r}$, so `$V$' in \eqref{thm:pb-Lips-7} should be changed to `$V_P$'. Observe,
\begin{eqnarray*}
	\|P_W[V]- Y'_{X,Z,V}(0)\|_F & \leq & \|Y'_{X,Z,V}(0)- Y'_{X,0,V}(0)\|_F + \|P_W[V]- Y'_{X,0,V}(0)\|_F\\
	& \leq & 2.31\|Z\|_F + \|P_W[V]-P_X[V]\|_F ,
\end{eqnarray*}
where the second inequality is due to Step 3 in the proof of Proposition~\ref{lemma:Stief-pullback-Lips}. We have
\begin{eqnarray*}
	\|P_W[V]-P_X[V]\|_F & = & \half\|WW^\top V+WV^\top W-XX^\top V-XV^\top X\|_F \\
	& \leq & \half\|WW^\top V-WX^\top V\|_F + \half\|WX^\top V-XX^\top V\|_F \\ & &+ \half\|WV^\top W - WV^\top X\|_F + \half\|WV^\top X-XV^\top X\|_F \\
	& \leq &2\|W-X\|_F \leq  2L_1\|Z\|_F = 2\|Z\|_F .
\end{eqnarray*}

Hence $\|P_W[V]- Y'_{X,Z,V}(0)\|_F\leq 4.31\|Z\|_F$. As long as $\|Z\|_F\leq \frac{1}{8.62}$, we have
$\|P_W[V]- Y'_{X,Z,V}(0)\|_F\leq\half$,
$\forall X\in \St_{n,r}$, $\forall Z\in\cT_X\St_{n,r}$, $\forall \|V\|\leq 1$, and consequently
$$\|P_W-\J_{\xi}\Retr(X,Z)\|_2\leq\half.$$
One last remark is that with some additional effort, this threshold on $\|Z\|_F$ can be slightly improved from $1/8.62$ to $1/8$. However, we shall leave it out here for simplicity.

\end{document}